\newtheorem*{theorema}{Theorem A}
\newtheorem*{theoremb}{Theorem B}
\newtheorem*{theoremc}{Theorem C}
\newtheorem{prop}{Proposition}[section]
\newtheorem{lemma}[prop]{Lemma}
\newtheorem{thm}[prop]{Theorem}
\theoremstyle{definition}
\theoremstyle{remark}
\newtheorem{remark}[prop]{Remark}
\numberwithin{equation}{section}
\begin{document}

\author{Hiroki Takahasi}

\address{Keio Institute of Pure and Applied Sciences (KiPAS), Department of Mathematics,
Keio University, Yokohama,
223-8522, JAPAN} 
\email{hiroki@math.keio.ac.jp}

\subjclass[2020]{Primary 37A25, 37A40; Secondary 37A55}
\thanks{{\it Keywords}: piecewise affine map;  mixing of all orders; decay of correlations; 
the Dyck shift}


\title[Heterochaos baker maps and the Dyck system]
 {Exponential mixing for\\
  heterochaos baker maps and the Dyck system}
 \maketitle

 \begin{abstract}
 We investigate mixing properties of piecewise affine non-Markovian
 maps acting on $[0,1]^2$ or $[0,1]^3$ and preserving the Lebesgue measure, which are natural generalizations of the
 {\it heterochaos baker maps} introduced in [Y. Saiki, H. Takahasi, J. A. Yorke. Nonlinearity {\bf 34} (2021) 5744--5761].
 These maps are skew products over uniformly expanding or hyperbolic bases, and
 the fiber direction is a center in which both contracting and expanding behaviors coexist. 
 We prove that these maps are mixing of all orders. 
 For maps with a mostly expanding or contracting center, we establish exponential mixing 
 for H\"older continuous functions. 
 Using this result,
 for the Dyck system originating in 
 the theory of formal languages,
we establish exponential mixing 
 with respect to its two coexisting ergodic measures of maximal entropy.
    \end{abstract}

\section{introduction}
   The baker map (see \textsc{Figure}~\ref{bakermap})
\[(x,y)\in[0,1)^2\mapsto\begin{cases}\vspace{1mm}
    \displaystyle{\left(2x,\frac{y}{2}\right)}&\text{ on }\displaystyle{\left[0,\frac{1}{2}\right)\times[0,1)},\\
   \displaystyle{\left (2x-1,\frac{y+1}{2}\right)}&\text{ on }\displaystyle{\left[\frac{1}{2},1\right)\times[0,1)},
   \end{cases}\]
is 
one of the simplest uniformly hyperbolic dynamical systems.
The name `baker' is used since the action of the map is reminiscent of the kneading dough \cite{H56}.
The baker map preserves the Lebesgue measure on $[0,1)^2$, and mixing properties with respect to this measure are well-known: 
It is 
$K$
and hence mixing of all orders \cite{R63}; 
has exponential decay of correlations  
for H\"older continuous functions \cite{Bow75,Rue04}.

Is there any analogue of the baker map that provides a hands-on, elementary understanding of complicated phenomena in non-hyperbolic systems?
In response to this question,
 two piecewise affine maps on $[0,1]^2$ or $[0,1]^3$
 were introduced in \cite{STY21}, called heterochaos baker maps.
  The name `heterochaos'  comes from their distinctive property that periodic points with different unstable dimensions coexist densely 
  in an invariant set
  \cite[Theorem~1.1]{STY21}, a phenomenon aka {\it the unstable dimension variability} 
\cite{DGSY94,KKGOY}, which was recognized earlier \cite{AS70,BD96,Ma78,Si72} as a $C^1$ robust phenomenon for diffeomorphisms of closed manifolds of dimension at least three.

\begin{figure}
\begin{center}
\includegraphics[height=3.4cm,width=8.7cm]
{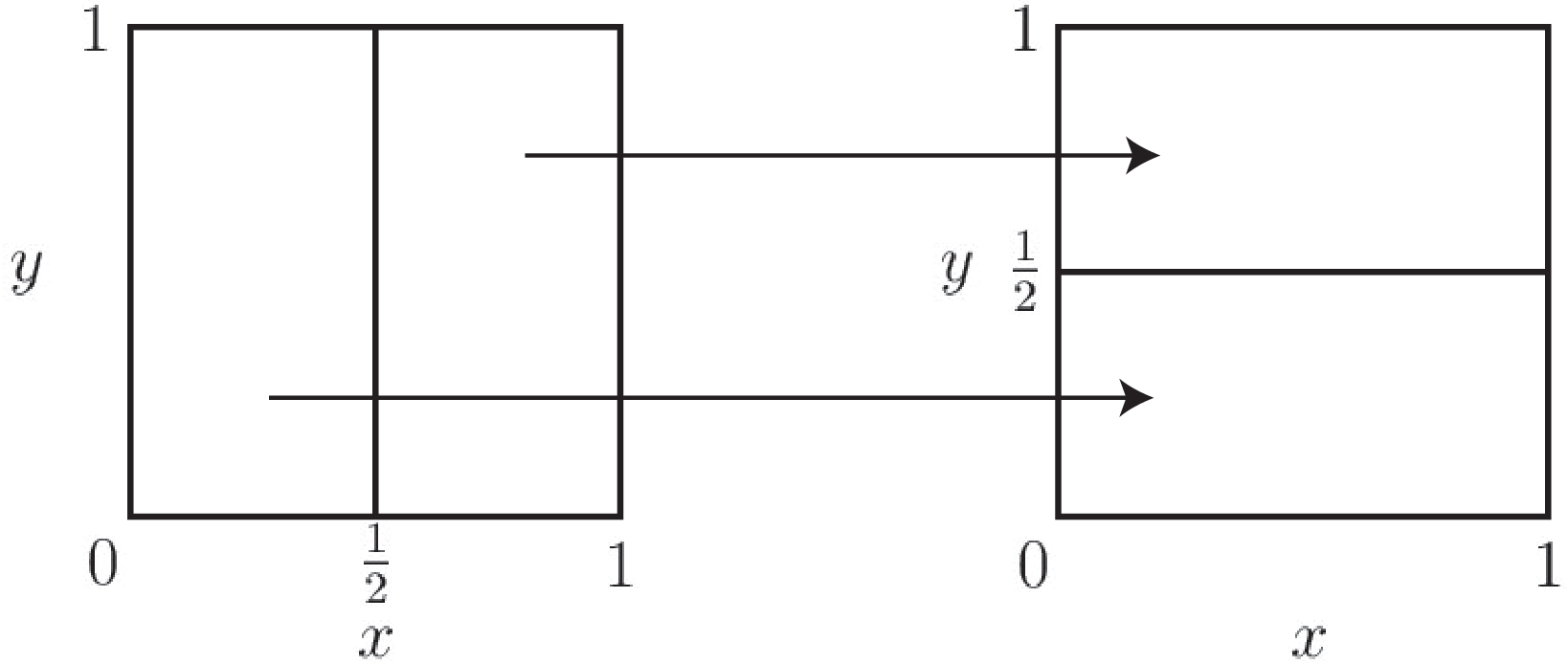}
\caption{The baker map: The $x$-direction is expanding by factor $2$ and the $y$-direction is contracting by factor $1/2$.}\label{bakermap}
\end{center}
\end{figure}

The analysis of the heterochaos baker maps is also motivated by the theory of symbolic dynamics,
which is closely related to automata theory and formal language theory.
There is a universal formal language due to W. Dyck. The Dyck system \cite{AU68,Kri74} is
the symbolic dynamics generated by that language. 
  Interestingly, the heterochaos baker maps are natural geometric models of the Dyck system \cite
  [Theorem~1.1]{TY}. This connection has opened the door for a complementary analysis of the heterochaos baker maps and the Dyck system, which consists in

   \begin{center}{\it verifying certain dynamical properties of the one side, with the aid of 
  the other.}
  \end{center}
  For a prototypical result on the heterochaos baker maps with the aid of the Dyck system,
  see \cite[Theorem~1.2]{TY}.

In this paper we proceed to
a complementary analysis of mixing properties of the heterochaos baker maps and the Dyck system. 
We will prove exponential mixing with respect to the Lebesgue measure for the heterochaos baker maps, and with the aid of this, prove exponential mixing for the Dyck system with respect to the two coexisting ergodic measures of maximal entropy.
Below we introduce these two dynamical systems, and state main results.

\subsection{The heterochaos baker maps}\label{hetero-b-sec} 
Let $m\geq2$ be an integer.
For $a\in(0,\frac{1}{m})$
 define $\tau_a\colon[0,1]\to[0,1]$ by 
\begin{equation}\label{Fa-1d}\tau_a(x)=\begin{cases}\vspace{1mm}
\displaystyle{\frac{x-(i-1)a}{a}}&\text{ on }[(i-1)a,ia),\ i\in\{1,\ldots,m\},\\ \displaystyle{\frac{x-ma}{1-ma}}&\text{ on }
[ma,1].
\end{cases}\end{equation}
 Consider the set of $2m$ symbols
 \begin{equation}\label{D}D=\{\alpha_1,\ldots,\alpha_m\}\cup\{\beta_1,\ldots,\beta_m\},\end{equation}
and define
\[\Omega_{\alpha_i}^+=\left[(i-1)a,ia\right)\times
\left[0,1\right]\text{ for }i\in\{1,\ldots,m\},\]
and
\[\Omega_{\beta_i}^+=\begin{cases}
\vspace{1mm}\displaystyle{\left[ma,1\right]\times
\left[\frac{i-1}{m },\frac{i }{m }\right)}&
\text{ for }i\in\{1,\ldots,m-1\},\\
\displaystyle{\left[ma,1\right]\times
\left[\frac{i-1 }{m},1\right]}&\text{ for }i=m.
\end{cases}\]
The sets $\Omega_{\alpha_i}^+$, 
$\Omega_{\beta_i}^+$, $i\in\{1,\ldots,m\}$ are pairwise disjoint and their union equals $[0,1]^2$.
Define a map $f_{a}\colon [0,1]^2\to[0,1]^2$ by
\begin{equation}\label{fa-2d}\begin{split}
  f_a(x,y)=
  \begin{cases}
    \displaystyle{\left(\tau_a(x),\frac{y}{m}+\frac{i-1}{m}\right)}&\text{ on }\Omega_{\alpha_i}^+,\ i\in\{1,\ldots,m\},\\
   \displaystyle{\left (\tau_a(x),my-i+m+1\right)}&\text{ on }\Omega_{\beta_i}^+,\ i\in\{1,\ldots,m\}.
   \end{cases}
\end{split}\end{equation}
See \textsc{Figure}~\ref{fig} for $m=2$.

\begin{figure}
\begin{center}
\includegraphics[height=3.4cm,width=9.5cm]
{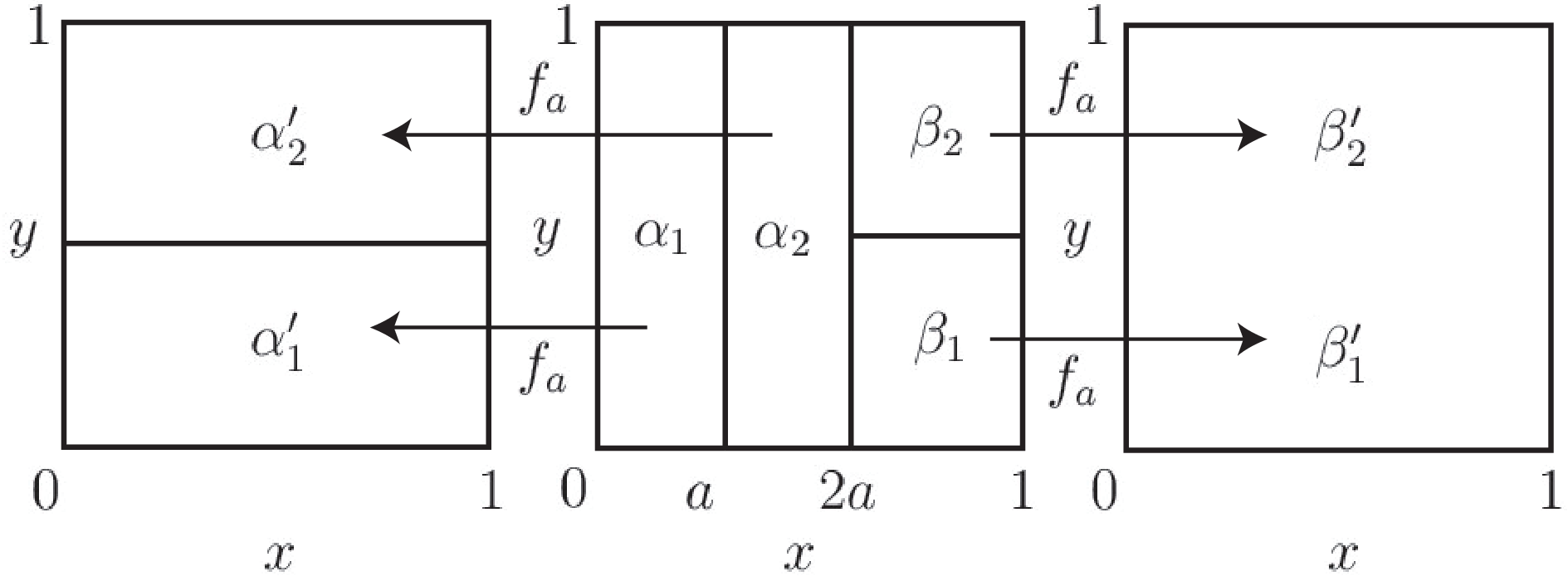}
\caption{The map $f_a$
for $m=2$: The domain  $\Omega_{\alpha_i}^+$  and 
 its image are labeled with $\alpha_i$ and 
 $\alpha_i'$ respectively, the same for $\Omega_{\beta_i}^+$; $f_a(\Omega_{\beta_1}^+)=[0,1]\times[0,1)$ and $f_a(\Omega_{\beta_2}^+)=[0,1]^2$.}\label{fig}
\end{center}
\end{figure}
\begin{figure}
\begin{center}
\includegraphics[height=4cm,width=9.5cm]
{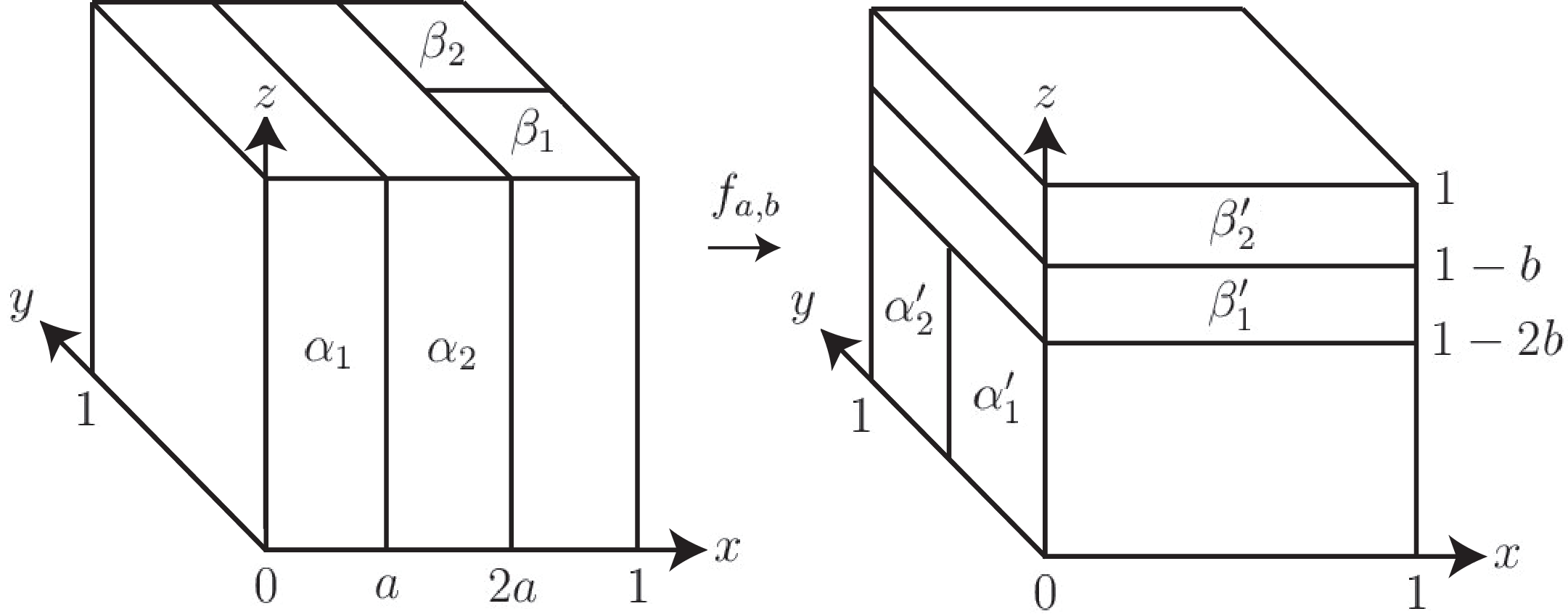}
\caption
{The map $f_{a,b}$ for $m=2$: The domain  $\Omega_{\alpha_i}^+$  and 
 its image are labeled with $\alpha_i$ and 
 $\alpha_i'$ respectively, the same for $\Omega_{\beta_i}^+$.}\label{fig2}
\end{center}
\end{figure}

Next, put $\Omega_{\alpha_i}=\Omega_{\alpha_i}^+\times\left[0,1\right]$ and $\Omega_{\beta_i}=\Omega_{\beta_i}^+\times\left[0,1\right]$
 for $i\in\{1,\ldots,m\}$. For $a,b\in(0,\frac{1}{m})$ 
 define a map $f_{a,b}\colon [0,1]^3\to[0,1]^3$ by 
\begin{equation}\label{3d-map}\begin{split}
  f_{a,b}(x,y,z)=
  \begin{cases}
    \displaystyle{\left(f_{a}(x,y),
    (1-mb)z\right)}&\text{ on }\Omega_{\alpha_i},\ i\in\{1,\ldots,m\},\\
   \displaystyle{\left (f_{a}(x,y),bz+1+b(i-m-1)\right)}&\text{ on }\Omega_{\beta_i},\ i\in\{1,\ldots,m\}.
   \end{cases}
\end{split}\end{equation}
 See \textsc{Figure}~\ref{fig2}.  
      In other words,
  $f_a$ is the projection of $ f_{a,b}$ to the $xy$-plane.
Note that $f_{a,b}$ is one-to-one except
on the points in the boundaries of $\Omega_{\alpha_i}$, $\Omega_{\beta_i}$ where it may be discontinuous and at most three-to-one. 
We call $f_a$ and $f_{a,b}$ 
{\it heterochaos baker maps}\footnote{The baker map is defined on $[0,1)^2$, whereas $f_a$ is defined on $[0,1]^2$. 
This minor discrepancy
has stemmed from the intention of the authors of \cite{STY21} to visualize the `heteroclinic set' in \cite{STY21}.}.

The heterochaos baker maps introduced in \cite{STY21} were precisely   $f_{\frac{1}{3}}$ and $f_{\frac{1}{3},\frac{1}{6}}$ with $m=2$. 
Symbolic dynamics for $f_a$, $f_{a,b}$ and their measures of maximal entropy were investigated in \cite{TY}. 
Saiki et al. \cite{STY22} considered some variants of $f_{a,b}$ in the context of fractal geometry and homoclinic bifurcations of diffeomorphisms.  Homburg and Kalle \cite{HK22} considered iterated function systems intimately related to $f_{a,b}$. 

For any $a\in(0,\frac{1}{m})$, $f_a$ preserves the Lebesgue measure on $[0,1]^2$. The map
 $f_{a,b}$ preserves the Lebesgue measure on $[0,1]^3$ if and only if $a+b=\frac{1}{m}$. Put
\[g_a=f_{a,\frac{1}{m}-a}.\]
 We will drop $a$ from notation and write $f$, $g$ and so on when the context is clear.

   \subsection{The Dyck system}\label{Dyck}
Krieger \cite{Kri00} introduced a certain class of subshifts having some algebraic property, called property A subshifts. It is a fundamental shift space in this class. 
The Dyck system is a subshift on the alphabet 
$D$ in \eqref{D}
consisting of $m$ brackets, $\alpha_i$ left and $\beta_i$ right in pair,
whose admissible words are sequences of legally aligned brackets.
To be more precise,
let $D^*$ denote the set of finite words in $D$.
Consider the monoid with zero, with $2m$ generators in $D$ and the unit element
$1$ with relations 
\begin{equation}\label{d-rel}\begin{split}\alpha_i\cdot\beta_j&=\delta_{i,j},\
0\cdot 0=0\text{ and }\\
\gamma\cdot 1&= 1\cdot\gamma=\gamma,\
 \gamma\cdot 0=0\cdot\gamma=0
\text{ for }\gamma\in D^*\cup\{ 1\},\end{split}\end{equation}
where $\delta_{i,j}$ denotes Kronecker's delta.
For $n\geq1$ and $\gamma_1\gamma_2\cdots\gamma_n\in D^*$ 
let
\[{\rm red}(\gamma_1\cdots\gamma_n)=\prod_{i=1}^n\gamma_i.\]
The one and two-sided Dyck shifts on $2m$ symbols are respectively defined by
\[\begin{split}\Sigma_{D}^+&=\{\omega\in D^{\mathbb Z_+}\colon {\rm red}(\omega_i\cdots \omega_j)\neq0\text{ for }i,j\in\mathbb Z_+\text{ with }i<j\},\\
\Sigma_{D}&=\{\omega\in D^{\mathbb Z}\colon {\rm red}(\omega_i\cdots \omega_j)\neq0\text{ for }i,j\in\mathbb Z\text{ with }i<j\},\end{split}\]
where $\mathbb Z_+$ denotes the set of non-negative integers.
Let $\sigma_+$ and $\sigma$ denote the left shifts acting on $\Sigma_{D}^+$ and $\Sigma_{D}$ respectively.

As a counterexample to the conjecture of Weiss \cite{W70},   Krieger \cite{Kri74} showed that the Dyck system has exactly two ergodic measures of maximal entropy, and
they are fully supported and Bernoulli.
Meyerovitch \cite{Mey08} proved the existence of tail invariant measures for the Dyck system.
To our knowledge, there is no result on
further statistical properties of the measures of maximal entropy for the Dyck system.

\subsection{Statements of the results}
 Under the iteration of  $f_a$,
the $x$-direction is expanding by factor $\frac{1}{a}$ or $\frac{1}{1-ma}$, while 
the $y$-direction is a center:
contracting by factor $\frac{1}{m}$ on $\Omega^+_\alpha$ and expanding by factor $m$ on $\Omega^+_\beta$,
where
\[\Omega^+_\alpha=\bigcup_{i=1}^m \Omega_{\alpha_i}^+\ \text{ and }\ \Omega^+_\beta=\bigcup_{i=1}^m\Omega_{\beta_i}^+.\]
The local stability in the $y$-direction along each orbit  
is determined by the asymptotic time average of 
the function
  \begin{equation}\label{geometric-c}\phi^c(p)=\begin{cases}-\log m&\text{ on }  \Omega^+_\alpha,\\
 \log m&\text{ on }  \Omega^+_\beta,\end{cases}\end{equation}
called {\it the central Jacobian}. Write $S_n\phi^c$ for the sum $\sum_{k=0}^{n-1}\phi^c\circ f_a^k$. 
Since $f_a$ is a skew product map over $\tau_a$
and $\phi^c$ is constant on sets of the form $\{x\}\times[0,1]$, $x\in[0,1]$, 
the ergodicity of $\tau_a$ with respect to the Lebesgue measure on $[0,1]$ implies that
  \begin{equation}\label{chia}\lim_{n\to\infty}\frac{1}{n}S_n\phi^c(p)=(1-2ma)\log m\ \text{ for Lebesgue a.e. $p\in[0,1]^2$.}\end{equation}
We classify $f_a$, $g_{a}$ into three types
according to the sign of this limit value:
\begin{itemize}
\item[(i)] $a\in(0,\frac{1}{2m})$ (mostly expanding center),

 \item[(ii)] $a\in(\frac{1}{2m},\frac{1}{m})$
 (mostly contracting center),
 \item[(iii)] $a=\frac{1}{2m}$ (mostly neutral center).
 \end{itemize}

In \cite{STY21},
 $f_{\frac{1}{3}}$ and $g_{\frac{1}{3}}$ with $m=2$ were shown to be weak mixing with respect to the Lebesgue measure.
  The proof there can be slightly modified 
  to show the weak mixingness of $f_a$ and $g_a$ with respect to the Lebesgue measure for any $a\in(0,\frac{1}{m})\setminus\{ \frac{1}{2m}\}$.
 Our first result considerably strengthens this.
Let $(X,\mathscr{B},\nu)$ be a probability space, $T\colon X\to X$ be a measurable map preserving $\nu$ and let $k\geq2$ be an integer.
We say $(T,\nu)$ is $k$-mixing if 
for all $A_0,A_1\ldots,A_{k-1}\in\mathscr{B}$,
\[\lim_{n_1,\ldots,n_{k-1}\to\infty}\nu(A_0\cap T^{-n_1}(A_1)\cap\cdots\cap T^{-n_1-n_2-\cdots-n_{k-1}}(A_{k-1}))=\prod_{j=0}^{k-1}\nu(A_j).\]
$2$-mixing is usually called mixing or strong mixing. 
Clearly $(k+1)$-mixing implies $k$-mixing, but the converse is unknown.
We say $(T,\nu)$ is {\it mixing of all orders} if it is $k$-mixing for any $k\geq2$. 
Let ${\rm Leb}$ denote the Lebesgue measure on $[0,1]^2$ or $[0,1]^3$.

\begin{theorema}
For any $a\in(0,\frac{1}{m})$, 
 $(f_a,{\rm Leb})$ and  $(g_a,{\rm Leb})$ are mixing of all orders.
\end{theorema}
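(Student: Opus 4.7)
The plan is to prove that $(f_a,\mathrm{Leb})$ and $(g_a,\mathrm{Leb})$ are measurably isomorphic to Bernoulli shifts; since Bernoulli systems are $K$-automorphisms and every $K$-automorphism is mixing of all orders (Rokhlin), this would establish Theorem~A uniformly across the three regimes (i)--(iii).

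For $f_a$, I would code $p\in[0,1]^2$ by the bi-infinite sequence $\Phi(p)=(\gamma_k(p))_{k\in\mathbb{Z}}\in D^{\mathbb Z}$ with $f_a^k(p)\in\Omega_{\gamma_k(p)}^+$. Using that $f_a(\Omega_{\alpha_i}^+)=[0,1]\times[(i-1)/m,i/m]$ and $f_a(\Omega_{\beta_i}^+)=[0,1]^2$, a direct cylinder computation identifies $\Phi_*\mathrm{Leb}$ with the stationary Markov measure on $D^{\mathbb Z}$ having transitions $P(\alpha_i,\alpha_j)=a$, $P(\alpha_i,\beta_i)=1-ma$, $P(\alpha_i,\beta_j)=0$ for $j\neq i$, $P(\beta_i,\alpha_j)=a$, and $P(\beta_i,\beta_j)=(1-ma)/m$, with stationary distribution $\pi(\alpha_i)=a$, $\pi(\beta_i)=(1-ma)/m$. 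This chain is irreducible and aperiodic (the self-loop $\alpha_i\to\alpha_i$ has probability $a>0$), so by the Friedman--Ornstein theorem the Markov shift $(\sigma,\Phi_*\mathrm{Leb})$ is Bernoulli.

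To transfer this to $f_a$ itself, one needs to verify that $\Phi$ is essentially injective modulo $\mathrm{Leb}$. The $x$-coordinate is recovered from the forward coding via the Bernoulli structure of $\tau_a$. The $y$-coordinate obeys a base-$m$ stack rule under $f_a$: each $\alpha_i$ pushes the digit $i-1$ onto the stack of base-$m$ digits of $y$, and each $\beta_j$ pops (and reveals) the top digit $j-1$. Exposure of the $d$th digit of $y_0$ thus amounts to a $\pm 1$ random walk with step probabilities $(1-ma,\,ma)$ reaching the level $d$ along some tail of the coding: positive drift supplies this in regime (i) via the forward tail, in regime (ii) via the backward tail (where the reversed chain has opposite drift), and in the neutral regime (iii) it is supplied by null recurrence of the symmetric walk. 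Hence $\Phi$ is essentially injective for every $a\in(0,1/m)$, and $(f_a,\mathrm{Leb})$ is Bernoulli. For $g_a$, the $z$-direction is uniformly contracted (by $ma$ on $\Omega_\alpha$ and by $1/m-a$ on $\Omega_\beta$), so refining $\Phi$ by the base-$m$ digit of $z$ again yields a finite-state irreducible aperiodic Markov chain; essential injectivity in $z$ is immediate from the exponential expansion of $z$-distances under $g_a^{-1}$, and Friedman--Ornstein gives the Bernoulli isomorphism for $(g_a,\mathrm{Leb})$.

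I expect the main obstacle to be this essential-injectivity step in the neutral regime $a=1/(2m)$, where the digit-recovery walk has zero drift and recovery rests purely on null recurrence, with no exponential rate available. One also has to bookkeep measurability carefully so that the extraction of the digits of $y$ from the bi-infinite coding is a well-defined Borel procedure $\mathrm{Leb}$-a.e., avoiding any hidden uniform hyperbolicity assumption on the center. For $g_a$ the only additional task is to verify that the refined partition is Markov with respect to the extended dynamics, which is routine given the affine structure.
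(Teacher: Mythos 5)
Your argument breaks down at its first and most important step: the push-forward $\Phi_*{\rm Leb}$ is \emph{not} a stationary Markov measure on $D^{\mathbb Z}$. The transition probabilities you compute are correct for words of length two, but the Markov property fails for longer words, precisely because of the ``stack'' structure in the $y$-direction that you yourself describe afterwards. Concretely, take $m=2$. One checks that $f_a^3$ maps the cylinder $\Omega^+_{\alpha_1}\cap f_a^{-1}(\Omega^+_{\alpha_1})\cap f_a^{-2}(\Omega^+_{\beta_1})$ affinely onto $[0,1)\times[0,\frac{1}{2}]$, which meets $\Omega^+_{\beta_2}=[2a,1]\times[\frac{1}{2},1]$ in a Lebesgue null set; hence, conditioned on the past $\alpha_1\alpha_1\beta_1$, the next symbol is $\beta_2$ with probability $0$. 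Conditioned on the past $\beta_1\beta_1\beta_1$, whose image under $f_a^3$ is all of $[0,1]\times[0,1)$, the next symbol is $\beta_2$ with probability $\frac{1}{2}(1-2a)>0$. Both histories end in $\beta_1$, so the coding process is not a Markov chain and the Friedman--Ornstein theorem does not apply. This is not a repairable technicality within your framework: the support of $\Phi_*{\rm Leb}$ is the Dyck shift (the coding theorem quoted from [TY] in Section~2.9), which is not of finite type and not even sofic, so no recoding by a \emph{finite} partition can restore the Markov property. The absence of a finite Markov partition, caused exactly by the $y$-dynamics, is the central difficulty the paper's construction is designed to circumvent. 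A secondary issue is that your target (Bernoullicity of $(f_a,{\rm Leb})$ for every $a$) is strictly stronger than mixing of all orders and is not established in the paper either; even granting a Markov symbolic model, your ``essential injectivity'' step is only a heuristic, and at $a=\frac{1}{2m}$ it rests on null recurrence, which is precisely where quantitative control is hardest.

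For comparison, the paper replaces your one-step finite coding by a \emph{countable} Markov structure obtained by inducing: a stopping time $R$ (the first cutting time in the $y$-direction, characterized as a first-passage time of the same $\pm1$ walk you identify) produces an induced map $f^R$ with infinitely many full affine branches. Almost sure finiteness of $R$ for $a\in(0,\frac{1}{2m}]$ --- which at the neutral parameter already requires a central limit theorem, Borel--Cantelli, and reflection-principle path counting --- yields exactness of $(f_a,{\rm Leb})$ and the $K$-property of $(g_a,{\rm Leb})$ on a full-measure set, hence mixing of all orders; the parameters $a\in(\frac{1}{2m},\frac{1}{m})$ are then handled not by a symmetric argument on the backward tail but by the involution $\iota(x,y,z)=(1-z,1-y,1-x)$ and the resulting invariance of multiple correlations under $a\mapsto\frac{1}{m}-a$. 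If you wish to salvage your approach, the induced countable-state Markov map, not the one-step coding, is the object to which an Ornstein-type argument would have to be applied.
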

In fact, we will show that 
$(f_a,{\rm Leb})$ is exact and the restriction of $(g_a,{\rm Leb})$
to a set of full Lebesgue measure has $K$-property (Proposition~\ref{exact}).
These properties imply mixing of all orders \cite[Section~2.6]{R63}.

Concerning rates of $2$-mixing
 we obtain 
 the following result. 
All functions appearing in this paper are real-valued.
For a pair $\varphi$, $\psi$ of functions in $L^2(\nu)$,
define their correlations by
\[{\rm Cor}_n(T;\varphi,\psi;\nu)=\left|\int\varphi(\psi\circ T^n){\rm d}\nu-\int\varphi{\rm d}\nu\int\psi{\rm d}\nu\right|\text{ for }n\geq1.\]
For a metric space $X$ and $\eta\in(0,1]$ let
$\mathscr H_\eta(X)$ denote the set of H\"older continuous functions on $X$ with a H\"older exponent $\eta$.
We say $(T,\nu)$ is {\it exponentially mixing}
if 
for each $\eta\in(0,1]$
there exists $\lambda=\lambda(\eta)\in(0,1)$ such that 
for any pair $\varphi$, $\psi$ of functions in $\mathscr H_\eta(X)$
there exists $C=C(\varphi,\psi)>0$ such that 
${\rm Cor}_n(T;\varphi,\psi;\nu)\leq C\lambda^n$ for all $n\geq1$.
\begin{theoremb}
For any $a\in(0,\frac{1}{m})\setminus\{\frac{1}{2m}\}$,
$(f_a,{\rm Leb})$ and $(g_a,{\rm Leb})$ are exponentially mixing.
\end{theoremb}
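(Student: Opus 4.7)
My plan is to reduce the 3-dimensional case $g_a$ to the 2-dimensional case $f_a$ via its uniformly contracting $z$-fiber, and then to construct a Young tower for $f_a$ with exponentially decaying return-time tail. For the reduction, observe that $g_a$ is a skew product over $f_a$ with fiber contraction rate $\lambda_s = \max\{ma, 1/m - a\} < 1$ in the $z$-direction. For H\"older observables $\varphi, \psi$ on $[0,1]^3$, the composition $\psi \circ g_a^n$ is H\"older-in-$z$ with H\"older constant at most $\mathrm{Hol}_\eta(\psi)\,\lambda_s^{\eta n}$, so replacing $\psi \circ g_a^n$ by its $z$-average (a function of $(x,y)$ only) introduces an error that decays exponentially in $n$. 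After this replacement the correlation reduces to one involving only the dynamics of $f_a$, up to an exponentially small remainder; equivalently, a Young tower for $f_a$ lifts to one for $g_a$ by attaching the $z$-direction as a uniformly contracting stable fiber. Thus it suffices to prove exponential mixing for $(f_a, \mathrm{Leb})$.

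For $f_a$, the key is the explicit Bernoulli structure of the base. Under Lebesgue, the branch-indices of a typical $\tau_a$-orbit form an i.i.d.\ sequence on $\{1, \ldots, m+1\}$ with weights $(a, \ldots, a, 1-ma)$, and the central Jacobian $\phi^c$ takes only two values, depending on whether the current branch index lies in $\{1, \ldots, m\}$ or equals $m+1$. Consequently $(S_n\phi^c)$ is a biased random walk with drift $(1 - 2ma)\log m$, nonzero by the hypothesis $a \neq 1/(2m)$. I would construct a Young tower over a small reference rectangle $\Delta_0 \subset [0,1]^2$, with return time $R(p)$ defined by imposing three conditions: (a) geometric return of the $x$-coordinate to a cylinder aligned with $\Delta_0$; (b) a Pliss-style hyperbolic-time condition on $S_n\phi^c$ ensuring uniform center expansion (or contraction) of $f_a^R$; (c) full-branch Markov alignment of $f_a^R\colon\{R=n\}\to\Delta_0$. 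Because $\phi^c$ factors through the base symbolic coding and $\tau_a$ is fully Markov, $R$ can be taken as a function of the base symbolic sequence alone, so level sets $\{R=n\}$ decompose cleanly as unions of base cylinders. By Cram\'er's large-deviation theorem applied to $S_n\phi^c$, the tail $\mathrm{Leb}(R > n)$ decays exponentially; bounded distortion is trivial for piecewise affine maps; and Young's theorem on correlations for Markov towers then yields exponential decay of correlations for H\"older observables.

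The main technical obstacle is organizing the Young tower so that the return map is simultaneously Markov, full-branched, uniformly hyperbolic, and has exponential tail. In the mostly expanding regime $a < 1/(2m)$, both directions are non-uniformly expanding in average, and one applies the non-uniformly expanding variant of Young's theorem (as developed by Alves, Luzzatto, and Pinheiro). In the mostly contracting regime $a > 1/(2m)$, the $y$-direction plays the role of a non-uniform stable direction, and a genuinely hyperbolic tower is required. The two regimes demand symmetric but formally distinct constructions, yet the essential random-walk drift mechanism driving the exponential tail is common to both.
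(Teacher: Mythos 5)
Your treatment of the mostly expanding regime $a\in(0,\frac{1}{2m})$ is essentially sound and close to the paper's: the paper likewise builds a full-branch induced Markov map $f^R$ over a reference rectangle, with the return time governed by the first ``cutting time'' of the $y$-width, proves an exponential tail for $\{R>n\}$ (by path counting in a Markov diagram plus the reflection principle rather than by Cram\'er, but the underlying random-walk drift mechanism is the one you identify), and applies Young's tower results. Your $z$-averaging reduction from $g_a$ to $f_a$ is a legitimate variant of the paper's device of collapsing the $z$-coordinate of the tower, though as stated it glosses over a real point: the $z$-collapsed observable is of the form $\psi(f_a^k(x,y),c_k(x,y))$ and is \emph{not} uniformly H\"older on $[0,1]^2$ (its H\"older seminorm grows exponentially in $k$), so you cannot feed it back into ``exponential mixing of $f_a$ for H\"older observables'' as a black box; one has to work with the separation-time metric on the tower and regularize with the transfer operator, exactly as in Young's Steps I--III.

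The genuine gap is the mostly contracting regime $a\in(\frac{1}{2m},\frac{1}{m})$. You defer this case to ``a genuinely hyperbolic tower'' obtained by a ``symmetric but formally distinct construction,'' but no construction is given, and the obvious symmetric one fails: for $a>\frac{1}{2m}$ the stopping time defined by forward cutting of the $y$-width is infinite on a set of \emph{positive} Lebesgue measure (the level random walk now drifts away from the cutting level), so there is no full-branch forward-induced Markov map; moreover $f_a$ is a non-invertible endomorphism whose center is only non-uniformly contracting, which places it outside the hyperbolic (stable/unstable product) tower framework. The symmetry that actually rescues this case lives at the level of the three-dimensional map: the involution $\iota(x,y,z)=(1-z,1-y,1-x)$ conjugates $g_a^{-1}$ with $g_{\frac{1}{m}-a}$ and gives an exact identity between correlations of $g_a$ and of $g_{\frac{1}{m}-a}$ (Proposition~\ref{dual}), reducing the mostly contracting case to the mostly expanding one. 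This symmetry is invisible for $f_a$ alone, since $\iota$ exchanges the $x$- and $z$-coordinates; so by reducing $g_a$ to $f_a$ at the outset you discard precisely the tool needed for half of the parameter range. The workable order is the reverse of yours: prove exponential mixing for $g_a$ with $a<\frac{1}{2m}$ on the three-dimensional tower, transfer it to $g_{\frac{1}{m}-a}$ by the involution, and only then deduce the statement for $f_a$ by taking observables constant in $z$.
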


As we recall in Section~\ref{code-sec}, the heterochaos baker maps are geometric models of the Dyck system \cite[Theorem~1.1]{TY}: Following the orbits of $f_a$ and $f_{a,b}$ over the partitions $\{\Omega_\gamma^+\}_{\gamma\in D}$ and $\{\Omega_\gamma\}_{\gamma\in D}$, one obtains $\Sigma_D^+$ and $\Sigma_D$
respectively.
We exploit this connection
to establish exponential mixing for the Dyck system.

For the one-sided (resp. two-sided) Dyck shift,
there exist exactly two shift invariant ergodic measures of maximal entropy \cite{Kri74}, which we denote by $\nu_\alpha^+$ and $\nu_\beta^+$ (resp. $\nu_\alpha$ and $\nu_\beta$), leaving the details to Section~\ref{Dyck-mme}.
As usual, the metrics $d$ on the shift spaces are the Hamming metrics: For distinct points $\omega$, $\omega'\in\Sigma_D^+$, 
\[d(\omega,\omega')=\exp(-\min\{i\geq0\colon \omega_i\neq\omega_i'\}),\]
and for distinct points $\omega$, $\omega'\in\Sigma_D$, 
\[d(\omega,\omega')=\exp(-\min\{i\geq0\colon \omega_i\neq\omega_i'\text{ or }\omega_{-i}\neq\omega_{-i}'\}).\]
 \begin{theoremc}
 All
 $(\sigma_+,\nu_\alpha^+)$, $(\sigma_+,\nu_\beta^+)$, 
 $(\sigma,\nu_\alpha)$, $(\sigma,\nu_\beta)$ are exponentially mixing. 
\end{theoremc}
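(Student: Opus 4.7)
The strategy is to transfer the exponential mixing of Theorem~B to the Dyck system through the geometric coding of Section~\ref{code-sec}. Let $\pi_+\colon[0,1]^2\to\Sigma_D^+$ and $\pi\colon[0,1]^3\to\Sigma_D$ denote the codings obtained by following orbits of $f_a$ and $g_a$ over the partitions $\{\Omega_\gamma^+\}_{\gamma\in D}$ and $\{\Omega_\gamma\}_{\gamma\in D}$. For each measure of maximal entropy (MME) on the Dyck shift I aim to choose a parameter $a$ so that the push-forward of Lebesgue under the coding equals that MME, and then read off correlation decay from Theorem~B.

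To pin down the parameters, I would compute $h(f_a,{\rm Leb})$ via Pesin's formula. Since $f_a$ preserves Lebesgue and has Lyapunov exponents $\chi_x=-ma\log a-(1-ma)\log(1-ma)$ in the base and $\chi_y=(1-2ma)\log m$ in the center,
\[h(f_a,{\rm Leb})=\chi_x+\max\{0,\chi_y\}.\]
Setting this equal to $h_{\rm top}(\sigma_+)=\log(m+1)$ yields exactly two solutions in $(0,\frac{1}{m})\setminus\{\frac{1}{2m}\}$, namely $a=\frac{1}{m+1}$ (mostly contracting, with $\alpha$-symbol frequency $\frac{m}{m+1}$) and $a=\frac{1}{m(m+1)}$ (mostly expanding, with $\beta$-symbol frequency $\frac{m}{m+1}$); an analogous computation handles $g_a=f_{a,\frac{1}{m}-a}$, whose extra $z$-contribution is always non-positive. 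The push-forwards are shift-invariant MMEs, distinguished by their asymmetric symbol frequencies, so one corresponds to $\nu_\alpha^+$ and the other to $\nu_\beta^+$, and analogously for the two-sided case. Provided the codings are bijective outside a Lebesgue-null set (to be recorded from \cite{TY}), they realize measure-theoretic isomorphisms between $(f_a,{\rm Leb})$ and $(\sigma_+,\nu_\gamma^+)$, and between $(g_a,{\rm Leb})$ and $(\sigma,\nu_\gamma)$, giving
\[{\rm Cor}_n(\sigma_+;\varphi,\psi;\nu_\gamma^+)={\rm Cor}_n(f_a;\varphi\circ\pi_+,\psi\circ\pi_+;{\rm Leb}).\]

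The main obstacle is that $\varphi\circ\pi_+$ is H\"older only in the symbolic metric generated by $\{\Omega_\gamma^+\}$, not in the Euclidean metric on $[0,1]^2$, because the $n$-cells of the coding partition fail to shrink uniformly in the central direction. To circumvent this I would approximate $\varphi\in\mathscr H_\eta(\Sigma_D^+)$ by the locally constant function $\varphi_N$ equal to $\varphi$ at a chosen point of each $N$-cylinder, so that $\|\varphi-\varphi_N\|_\infty\le Ce^{-\eta N}$, and then mollify $\varphi_N\circ\pi_+$ by convolution with a smooth kernel of width $\epsilon_N$ to obtain a genuine H\"older function on $[0,1]^2$ whose H\"older norm grows at most polynomially in $N$. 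Applying Theorem~B to the mollified pullbacks and summing three error contributions --- the approximation of $\varphi$ by $\varphi_N$, the $L^2({\rm Leb})$ mollification error (which requires a bound on the Lebesgue measure of $\epsilon_N$-neighborhoods of $N$-cell boundaries), and the exponential main term from Theorem~B --- with $N$ chosen proportional to $n$, should yield exponential decay of the original correlation, possibly at a reduced rate. The delicate technical point will be the geometric control on cell boundaries, which should follow from the distortion estimates used in the proof of Theorem~B.
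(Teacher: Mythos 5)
Your first half is essentially the paper's route: the parameters you find are exactly $c_1=\frac{1}{m(m+1)}$ and $c_2=\frac{1}{m+1}$ of Section~5, the identity ${\rm Leb}\circ\pi_{c_1,c_2}^{-1}=\nu_\beta$ (and its $\alpha$-counterpart) is Proposition~\ref{correspond}, and you correctly isolate the real obstacle, namely that $\varphi\circ\pi$ need not be H\"older for the Euclidean metric. The gap is in your proposed workaround. Your mollification scheme hinges on the claim that the H\"older (Lipschitz) norm of the mollified pullback grows ``at most polynomially in $N$,'' which in turn requires the Lebesgue measure of the $\epsilon_N$-neighborhood of the $N$-cell boundaries to be controlled with $\epsilon_N$ only polynomially small. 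This fails: the $N$-cells are rectangles, and over an $x$-column whose itinerary stays in $\Omega^+_\beta$ the central direction is expanded by $m$ at every step, so that single column already contributes $((1-ma)m)^N$ horizontal edges of total length $((1-ma)m)^N$; for $a=c_1$ this is $(\tfrac{m^2}{m+1})^N$, which grows exponentially for $m\geq2$. Hence $\epsilon_N$ must be exponentially small and the Lipschitz norm of your mollified function is exponentially large in $N$. The scheme is then only salvageable if you (i) extract from the proof of Theorem~B an explicit, uniform dependence of the constant $C(\varphi,\psi)$ on the H\"older norms, (ii) prove a sharp exponential perimeter bound, and (iii) choose $N=\delta n$ with $\delta$ small enough that the norm growth loses to the mixing rate --- none of which you supply. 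Moreover, your appeal to ``distortion estimates used in the proof of Theorem~B'' is empty: the maps are piecewise affine, there are no distortion estimates anywhere, and the proof of Theorem~B controls nothing about cell boundaries (its error control comes from a large deviation bound for $\tau_a$, a quite different mechanism).

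The paper sidesteps all of this by moving in the opposite direction: instead of pulling symbolic functions back to $[0,1]^3$ and repairing their regularity, it pushes the tower of Section~4 forward through $\pi_{c_1,c_2}$ onto $A_\beta\subset\Sigma_D$ (using Lemma~\ref{m-bij} to know the coding is a homeomorphism there) and reruns Young's argument on the symbolic tower. In the Hamming metric the set $\hat\theta(\hat G^k(A))$ has diameter at most $e^{-k}$ by construction, so the approximation step that needed large deviations on the geometric side becomes the trivial bound $|\phi|e^{-\eta k}$, and Step~3 goes through verbatim. Exponential mixing for $\nu_\alpha$ then follows from the involution $\iota_D$ (Proposition~\ref{dual-Dyck}), which is the same symmetry reduction you implicitly use. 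If you want to keep your approximation-based outline, you must replace the mollification step by an argument that works with the symbolic metric directly; as written, the key norm estimate is false.
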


\subsection{Outline of proofs and the structure of the paper}\label{structure-sec} 
Uniformly hyperbolic systems have Markov partitions,
and their
statistical properties 
are well-understood on a symbolic level
\cite{Bow75,Rue04,Sin72}.  
 The difficulty
 in analyzing statistical properties of the heterochaos baker maps consists in the lack of a Markov partition,
which is precisely due to the dynamics in the $y$-direction.

To overcome this difficulty,
we combine inducing and large deviations.
A main tool in our construction is a Markov diagram, originally
introduced by Hofbauer \cite{Hof81,Hof86} and developed by Keller \cite{Ke89}, Buzzi \cite{Buz99} and so on.
In Section~2,
for $f_a$, $a\in(0,\frac{1}{m})$ we define a stopping time
$R\colon[0,1]^2\to\mathbb Z_+\cup\{\infty\}$, and
construct  a uniformly expanding induced Markov map $f^{R}$ with infinitely many branches.
We show that  
$\{R=\infty\}$ is a null set
for $a\in(0,\frac{1}{2m}]$ (Proposition~\ref{R-fin}),
and 
the Lebesgue measure of the tail 
$\{R>n\}$
decays exponentially in $n$ for $a\in(0,\frac{1}{2m})$ (Proposition~\ref{tail-eq0}).
Since the heterochaos baker maps are piecewise affine, all these construction and estimates are purely combinatorial
and distortion estimates are not needed.
In Section~3
we show that both $(f_{a},{\rm Leb})$ and $(g_{a},{\rm Leb})$ are mixing of all orders for all $a\in(0,\frac{1}{2m}]$.
Using the `invariance' of
 correlations under the replacement of $a$ by $\frac{1}{m}-a$
 (Proposition~\ref{dual}), we complete the proof of Theorem~A.
In Section~4 we construct {\it towers}, 
and apply Young's result \cite{You98} to deduce the 
exponential mixing for the heterochaos baker maps. In order to estimate error bounds, 
we perform a large deviation argument for the map $\tau_a$.
 Using the invariance of
 correlations again completes the proof of Theorem~B. 

A proof of Theorem~C in Section~5 relies on
a surprising connection between one of the two ergodic measures of maximal entropy for the Dyck system and the Lebesgue measure (Proposition~\ref{correspond}).
We emphasize that Theorem~C is not an immediate consequence of Theoerem~B, since 
the coding maps (see Section~\ref{code-sec} for the definition)
from the heterochaos baker maps into the Dyck system
do not preserve H\"older continuous functions.
Our idea is to transfer the towers 
for the heterochaos baker maps to the Dyck shift spaces, and apply Young's result \cite{You98} again
to obtain the exponential mixing with respect to one of the two ergodic measures of maximal entropy. Exponential mixing with respect to the other one
follows from the symmetry in the Dyck system (Proposition~\ref{dual-Dyck}).




\subsection{Comparison with results on partially hyperbolic systems}
Like the heterochaos baker maps, skew product maps over uniformly hyperbolic or expanding bases are considered to be simple models of partially hyperbolic 
systems\footnote{The splitting of the tangent bundle of $[0,1]^2$ into $x$- and $y$-directions does not always give rise to a partially hyperbolic splitting for $f_a$:
If $0<a<\frac{m-1}{m^2}$ then
the minimum expansion rate in the $x$-direction is
$\min\{\frac{1}{a},\frac{1}{1-ma}\}$, which does not dominate the expansion rate $m$ in the $y$-direction.}.
Hence, it is relevant to compare our results with others on exponential decay of correlations for partially hyperbolic systems. 

Young's method \cite{You98,You99}
of deducing nice statistical properties 
using towers with fast decaying tails has  been successfully applied to 
some partially hyperbolic systems, e.g.,
\cite{ALP05,C02,C04,Do00} to deduce exponential decay of correlations.
With more functional analytic methods,
exponential decay of correlations was proved for 
certain partially hyperbolic diffeomorphisms \cite{CN17}, and
 for certain piecewise partially hyperbolic endomorphisms \cite{BBL20}.  
Clearly the heterochaos baker maps are covered by none of these existing results.

\subsection{Comparison with results on subshifts}
For a large class of subshifts with `non-uniform specification', Climenhaga \cite{C18} constructed towers
and applied Young's method to obtain nice statistical properties including exponential decay of correlations.
Clearly the Dyck system does not satisfy the
`non-uniform specification', since it would imply the uniqueness of measure of maximal entropy. One can also directly check the breakdown of 'non-uniform specification' using the definition of the Dyck system.

We believe it would be difficult to construct 
towers for the Dyck system to which one can apply Young's result, purely within the symbolic setting
with no recourse to the connection with the heterochaos baker maps
proved in \cite{TY}.
The geometric approach to the Dyck system in the proof of Theorem~C based on this connection seems promising, in order to establish many other dynamical properties of the measures of maximal entropy for the Dyck system. Moreover, this approach may be extended to some other subshifts with a high symmetry,
such as the Motzkin system \cite{M04}.

\section{Dynamics of the heterochaos baker maps}
In this section we analyze the dynamics of the heterochaos baker map $f_a$, $a\in(0,\frac{1}{m})$.  
 In Section~\ref{Hofbauer-sec} we introduce a Markov diagram for $f_a$ and describe its combinatorial structure. In Section~\ref{count-sec} we develop path counting arguments in the diagram, 
 and in Section~\ref{interpret} provide useful formulas for the Birkhoff sums of the function $\phi^c$ in \eqref{geometric-c}. 
  In Section~\ref{a-p} we introduce a stopping time $R$,
   and express it as `a twice jump time under a constraint'.
   In Section~\ref{another-sec} we give another formula for the stopping time, and
 show in Section~\ref{fin-title} the almost sure finiteness of the stopping time for $a\in(0,\frac{1}{2m}]$.
In Section~\ref{ind-exp-sec} we construct an induced Markov map,
and in Section~\ref{stop-sec} estimate the Lebesgue measure of the tail of the stopping time.
In Section~\ref{code-sec}
we recall the connection between the heterochaos baker maps and the Dyck system established in \cite{TY}.

\subsection{A Markov diagram}\label{Hofbauer-sec} 
Let ${\rm int}$ denote the interior operation in $\mathbb R^2$ or $\mathbb R^3$.
Let $A\subset[0,1]^2$ be a non-empty set with $A\subset {\rm int}(\Omega_\gamma^+)$ for some $\gamma\in D$. A set
  $B\subset[0,1]^2$ is a {\it successor} of $A$ if $B=f(A)\cap {\rm int}(\Omega_{\gamma'}^+)$ holds for some $\gamma'\in D$.
  If $B$ is a successor of $A$,  we write $A\to B$.
 We set
  \[\mathcal V_\alpha=\{{\rm int}(\Omega_{\alpha_i}^+)\colon i\in\{1,\ldots,m\} \}\ \text{ and }\ \mathcal V_\beta=\{{\rm int}(\Omega_{\beta_i}^+)\colon i\in\{1,\ldots,m\} \},\]
  and define $\mathcal V_n$, $n=0,1,\ldots$ inductively by
  \[\mathcal V_n=\begin{cases}\mathcal V_\alpha\cup\mathcal V_\beta&\text{ for }n=0,\\
  \mathcal V_{n-1}\cup\{B\colon \text{$B$ is a successor of an element of $\mathcal V_{n-1}$}\}&\text{ for }n\geq1.\end{cases}\]
  We set $\mathcal V=\bigcup_{n=0}^{\infty}\mathcal V_n$.
  The oriented graph $(\mathcal V,\to)$ is called a {\it Markov diagram} for $f$.
  We decompose the set $\mathcal V$ of vertices into infinitely many levels $\mathcal L_r$,
  $r=-1,0,1,\ldots$ as follows:
 \[\mathcal L_r=\begin{cases}\mathcal V_\alpha&\text{ for }r=-1,\\
\mathcal V_1\setminus \mathcal V_\alpha&\text{ for }r=0,\\
 \mathcal V_{r+1}\setminus\mathcal V_r&\text{ for }r\geq1.\end{cases}\]
    See \textsc{Figure}~\ref{diagram}. 
    Define 
 $l\colon \mathcal V\to\mathbb Z$ by 
 \[v\in \mathcal L_{l(v)}\text{ for each }v\in\mathcal V.\]
    

\begin{figure}
\begin{center}
\includegraphics[height=7cm,width=13.5cm]
{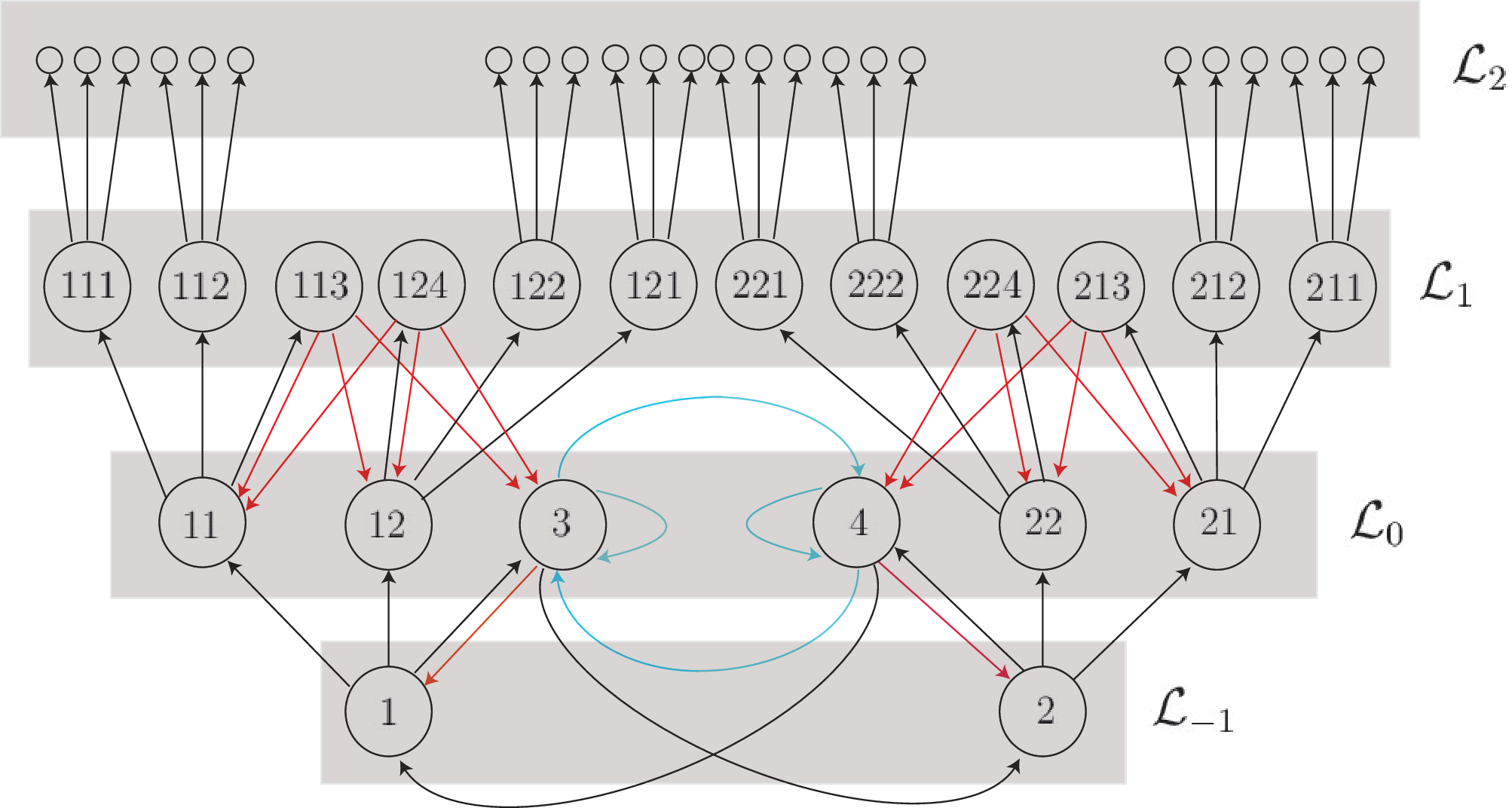}
\caption{Part of the Markov diagram $(\mathcal V,\to)$ for $m=2$.
We put $\alpha_1=1$, $\alpha_2=2$, $\beta_1=3$, $\beta_1=4$ for simplicity.
The vertices ${\rm int}(\Omega_{\gamma_1}^+)$,  $f({\rm int}(\Omega_{\gamma_1}^+))\cap{\rm int}(\Omega_{\gamma_2}^+)$,
$f(f({\rm int}(\Omega_{\gamma_1}^+))\cap {\rm int}(\Omega_{\gamma_2}^+))\cap {\rm int}(\Omega_{\gamma_3}^+)$ 
are labeled with $\gamma_1$, $\gamma_1\gamma_2$, $\gamma_1\gamma_2\gamma_3$ respectively.}
\label{diagram}
\end{center}
\end{figure}


It is possible to describe all outgoing edges and incoming edges for each vertex in $\mathcal V$.
However, we only have to estimate from above 
the number of paths of a given length connecting two given vertices.
For this purpose,
it suffices to describe the cardinalities of all outgoing edges from each vertex $v\in\mathcal V$ which are either upward, downward or parallel (shown in black, red, blue arrows respectively in \textsc{Figure}~\ref{diagram}), in terms of the function $l$.
The cardinalities of the three sets
\[\begin{split}
\mathcal V_{\uparrow}(v)&=\{v'\in\mathcal V \colon v\to v',\ l(v')= l(v)+1\},\\
\mathcal V_{\downarrow}(v)&=\{v'\in\mathcal V \colon v\to v',\ l(v')= l(v)-1\},\\
\mathcal V_{\to}(v)&=\{v'\in\mathcal V \colon v\to v',\  l(v')= l(v)\},
\end{split}\]
are given as follows:

\begin{itemize}
\item[(i)] For each  
$v\in \mathcal V_\alpha$,
\[\#\mathcal V_{\uparrow}(v)=m+1,\ \#\mathcal V_{\downarrow}(v)=0,\ \#\mathcal V_{\to}(v)=0.\]

\item[(ii)] For each  $v\in \mathcal V_\beta$,
\[\#\mathcal V_{\uparrow}(v)=0,\
 \#\mathcal V_{\downarrow}(v)=m,\  
 \#\mathcal V_{\to }(v)=m.\]

\item[(iii)]  For each $v\in\mathcal V\setminus\mathcal V_0$ with $v\subset \Omega^+_\alpha$,
\[\#\{v'\in\mathcal V_{\uparrow}(v)\}=m+1,\  \#\mathcal V_{\downarrow}(v)=0,\  
 \#\mathcal V_{\to}(v)=0.\]
\item[(iv)] For each $v\in\mathcal V\setminus\mathcal V_0$ with $v\subset \Omega^+_\beta$,
\[\#\{v'\in\mathcal V_{\uparrow}(v)\}=0,\  \#\mathcal V_{\downarrow}(v)=m+1,\  
 \#\mathcal V_{\to}(v)=0.\]


\end{itemize}

\subsection{Counting paths in the Markov diagram}\label{count-sec}
Let $n\geq1$.
A {\it path} of length $n$ is
a word $v_0\cdots v_{n}$ of elements of $\mathcal V$ of 
word length $n+1$ satisfying
 $v_k\to v_{k+1}$ for every $k\in\{0,\ldots,n-1\}$.
We say $k\in\{0,\ldots,n-1\}$ is a {\it hold time}
of a path $v_0\cdots v_{n}$ if $\{v_{k},v_{k+1}\}\subset\mathcal V_\beta$. 
Let $P_n$ denote the set of paths
 of length $n$ which have no hold time.
 
    In order to estimate the cardinalities of $P_n$ and its various subsets,
we consider projections to paths of the symmetric random walk on $\mathbb Z$.
Let 
\begin{equation}\label{zn-def}Z_n=\left\{l_0\cdots
l_n\in\mathbb Z^{n+1}\colon 
|l_{k}-l_{k+1}|=1 \text{ for }k\in\{0,\ldots,n-1\}\right\}.\end{equation}
Define a projection $\Phi_n\colon P_n\to Z_n$ by 
\[\Phi_n(v_0v_1\cdots v_n) = l(v_0)l(v_1)\cdots l(v_n).\]

\begin{lemma}\label{count}
Let $n\geq1$ and
  let $l_0l_1\cdots l_n\in Z_n$. 
    We have
\[
\#\Phi_n^{-1}(l_0l_1\cdots l_n)\leq (m+1)m^{\frac{1}{2}(n+l_n-l_0)}.\]
\end{lemma}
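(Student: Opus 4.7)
The plan is a direct step-by-step count exploiting the local structure recorded in cases (i)-(iv). My first observation is that a path $v_0v_1\cdots v_n \in P_n$ contains no parallel transitions at all: inspecting (i)-(iv), the only outgoing parallel edges in the diagram emanate from vertices in $\mathcal V_\beta$ (case (ii)) and all return to $\mathcal V_\beta$, thereby forming precisely the hold times forbidden in $P_n$. Hence every $l_{k+1}-l_k$ is $\pm 1$. Moreover, since upward transitions emanate only from alpha-side vertices (cases (i), (iii)) and downward transitions only from beta-side ones (cases (ii), (iv)), the alpha/beta type $\epsilon_k\in\{\alpha,\beta\}$ of $v_k$ is determined for $0\leq k\leq n-1$ directly from the level sequence via $\epsilon_k=\alpha$ iff $l_{k+1}=l_k+1$.

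Next I bound the multiplicity at each step. By (i) or (iii), a type-$\alpha$ vertex has exactly $m+1$ upward successors, splitting as $m$ alpha-side and $1$ beta-side; by (ii) or (iv), a type-$\beta$ vertex has at most $m+1$ downward successors, splitting as at most $m$ alpha-side and at most $1$ beta-side. Therefore, given $v_k$ and the required type $\epsilon_{k+1}$ (imposed by the level sequence for $k+1\leq n-1$), the number of admissible $v_{k+1}$ is at most $m$ if $\epsilon_{k+1}=\alpha$ and at most $1$ if $\epsilon_{k+1}=\beta$. At the terminal step $k+1=n$ the type $\epsilon_n$ is free, contributing at most $m+1$ choices for $v_n$.

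Multiplying the step-multiplicities over $k=1,\ldots,n$ yields at most $(m+1)\cdot m^{U^*}$ per starting vertex $v_0$, where $U^*=\#\{k:1\leq k\leq n-1,\ \epsilon_k=\alpha\}$. Combined with the count of eligible starting vertices $v_0\in\mathcal L_{l_0}$ of type $\epsilon_0$, which by the inductive construction of $\mathcal L_r$ from (i)-(iv) contributes the complementary factor $m^{\mathbf{1}(\epsilon_0=\alpha)}$, and using $U^*+\mathbf{1}(\epsilon_0=\alpha)=U=(n+l_n-l_0)/2$, one arrives at the claimed bound $(m+1)m^U$. The main technical obstacle is aligning $\#\{v_0\in\mathcal L_{l_0}:\epsilon(v_0)=\epsilon_0\}$ with the exponent of $m$ so that the single factor $(m+1)$ appears only once (either from the terminal step or, in degenerate cases, from the initial count); once the hierarchical description of $\mathcal L_r$ produced by (i)-(iv) is unpacked, this is mechanical combinatorics rather than any substantive estimate.
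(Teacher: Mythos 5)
Your step-by-step multiplicity count is correct as far as it goes, and for a \emph{fixed} initial vertex $v_0$ it does prove the stated bound: the absence of hold times forces $|l_{k+1}-l_k|=1$; the sign of $l_{k+1}-l_k$ determines whether $v_k\subset\Omega^+_\alpha$ or $v_k\subset\Omega^+_\beta$ for $k\leq n-1$; and each step contributes at most $m$ choices when the next vertex is forced into $\Omega^+_\alpha$, at most $1$ when it is forced into $\Omega^+_\beta$, and at most $m+1$ at the free terminal step. This gives at most $(m+1)m^{U^*}\leq(m+1)m^{\frac12(n+l_n-l_0)}$ paths \emph{emanating from a given $v_0$}, which is exactly what the paper's proof establishes as well (there by decomposing the path into alternating upward and downward runs and multiplying per-run counts, each taken from a fixed starting vertex). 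Up to that point your argument is a legitimate, more granular variant of the paper's.

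The gap is your last step. The assertion that the number of vertices of $\mathcal L_{l_0}$ of the prescribed type $\epsilon_0$ is at most $m^{\mathbf{1}(\epsilon_0=\alpha)}$ is false: already $\mathcal L_0=\mathcal V_1\setminus\mathcal V_\alpha$ contains, besides the $m$ elements of $\mathcal V_\beta$, the $m^2$ distinct sets $f({\rm int}(\Omega^+_{\alpha_i}))\cap{\rm int}(\Omega^+_{\alpha_j})$, $i,j\in\{1,\ldots,m\}$, all lying in $\Omega^+_\alpha$, and $\#\mathcal L_r$ grows with $r$. Hence $\#\{v_0\}\cdot(m+1)m^{U^*}$ does not collapse to $(m+1)m^{U}$; for instance the level sequence $0,1,0$ is realized by $m^2(m+1)$ distinct paths of $P_2$ (each level-$0$ vertex in $\Omega^+_\alpha$ has a unique successor in $\Omega^+_\beta$ at level $1$, which in turn has $m+1$ downward successors), exceeding the claimed bound $(m+1)m$. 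This step cannot be repaired from the local data (i)--(iv) alone, because the full-preimage version of the inequality is genuinely delicate for $l_0\geq0$. What is true, and what is actually used later (in \eqref{hit3} the initial vertex is fixed, and in Lemma~\ref{path-number} the initial level is $-1$ or $0$, so the number of admissible $v_0$ only affects a constant), is the per-initial-vertex bound that your multiplicative count already delivers; note that the paper's own proof likewise never multiplies by the number of admissible initial vertices. You should either prove the bound for the set of paths in $\Phi_n^{-1}(l_0\cdots l_n)$ with prescribed $v_0$, or carry the factor $\#\{v_0\in\mathcal L_{l_0}\colon \text{$v_0$ of type $\epsilon_0$}\}$ explicitly rather than asserting it equals $m^{\mathbf{1}(\epsilon_0=\alpha)}$.
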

\begin{proof}
   A path $u_0\cdots u_j$ in the Markov diagram is {\it upward}
if $l(u_k)<l(u_{k+1})$ for all $k\in\{0,\ldots,j-1\}$, and {\it downward}
if $l(u_{k+1})>l(u_k)$ for all $k\in\{0,\ldots,j-1\}$.
Let $r$, $s$ be integers with $-1\leq r<s$. 
From the description of the Markov diagram
in Section~\ref{Hofbauer-sec},
the number of upward paths from one vertex in $\mathcal L_r$ to 
another in $\mathcal L_s$ does not exceed 
$m^{s-r-1}(m+1)$, and
 the number of upward paths from one vertex in $\mathcal L_r$ to 
another in $\mathcal L_s$ which can be concatenated to a downward path does not exceed 
$m^{s-r-1}$.
Reciprocally, 
the number of downward paths from one vertex in $\mathcal L_s$ to another in $\mathcal L_r$ 
does not exceed 
$m+1$, and
the number of downward paths from one vertex in $\mathcal L_s$ to another in $\mathcal L_r$ which can be concatenated to an upward path 
does not exceed 
$m$.

Let $l_0\cdots l_n\in Z_n$. 
We may assume $l_k\geq-1$
for $k\in\{0,\ldots,n\}$ for otherwise the desired inequality is trivial.
Any path in $\Phi_n^{-1}(l_0\cdots l_n)$
is uniquely written as
the alternate concatenation of upward and downward paths, see \textsc{Figure}~\ref{ascend} for example.
The sum of the lengths of all the upward paths in this concatenation equals $\frac{1}{2}(n+l_n-l_0)$. Hence we obtain
the desired inequality. 
   \end{proof}

 \begin{figure}
\begin{center}
\includegraphics[height=6.5cm,width=13cm]
{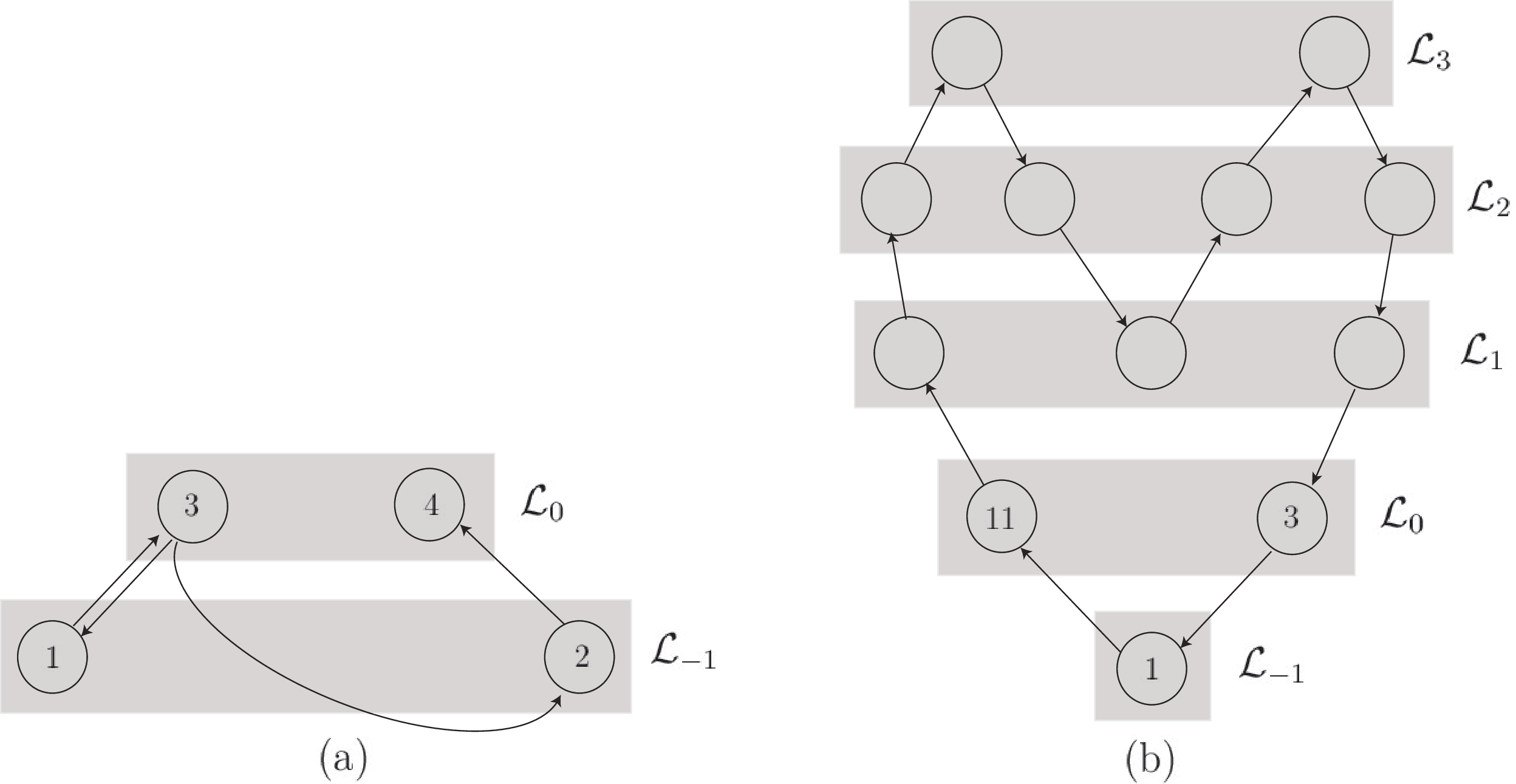}
\caption{Alternate concatenations of upward and downward paths
for $m=2$ with $\alpha_1=1$, $\alpha_2=2$, $\beta_1=3$, $\beta_1=4$ for simplicity:\\
(a) $3\to1\to2\to4$; (b) $1\to11\to\cdots\to3\to1$.}\label{ascend}
\end{center}
\end{figure}

For a vertex $v\in \mathcal L_0\setminus\mathcal V_\beta$ and positive integers $n$, $j$,
let $P_n(j;v)$ denote the set of paths
    $v_0\cdots v_{n}$ of length $n$ such that $v_0=v$,
    $l(v_k)\geq1$ for all $k\in\{1,\ldots,n\}$ and $l(v_n)=j$.
    Note that $P_n(j;v)\neq\emptyset$ if and only if 
    $n\equiv j$ mod $2$. 
 
 \begin{lemma}\label{1st-passage}
    For any $v\in\mathcal L_0\setminus\mathcal V_\beta$ and all positive integers $n$, $j$ with $n\equiv j$ mod $2$, 
    we have
     \[\#P_n(j;v)\leq\frac{j}{n}\begin{pmatrix}n\\\frac{n+j}{2}\end{pmatrix}\frac{m+1}{m}m^{\frac{n+j}{2}}.\]
   \end{lemma}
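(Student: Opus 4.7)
The plan is to combine a ballot-type count of admissible level sequences with a preimage bound refining Lemma~\ref{count}.

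Observe that $\Phi_n$ sends each element of $P_n(j;v)$ to a sequence $l_0 l_1\cdots l_n\in Z_n$ with $l_0=0$, $l_n=j$, and $l_k\geq 1$ for all $k\in\{1,\ldots,n\}$; denote this subset of $Z_n$ by $Z_n^{\ast}$. Since $|l_1-l_0|=1$ with $l_0=0$ and $l_1\geq 1$, necessarily $l_1=1$, so the shifted tail $(l_1-1,l_2-1,\ldots,l_n-1)$ is a $\pm 1$ walk of length $n-1$ from $0$ to $j-1$ that stays $\geq 0$. A standard reflection argument (reflecting at the first visit to $-1$) gives
\[
\#Z_n^{\ast}=\binom{n-1}{\tfrac{n+j}{2}-1}-\binom{n-1}{\tfrac{n+j}{2}}=\frac{j}{n}\binom{n}{\tfrac{n+j}{2}}.
\]

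For each level sequence in $Z_n^{\ast}$, I would bound the number of lifts in $P_n(j;v)$. The first step from $v$ must go up, which is possible only when $v\subset\Omega^+_\alpha$; otherwise case (iv) in Section~\ref{Hofbauer-sec} forces $P_n(j;v)=\emptyset$ and the bound is trivial. Following the alternating up/down decomposition used in the proof of Lemma~\ref{count}, but with the starting vertex fixed at $v$ rather than arbitrary in $\mathcal L_0$, the argument gains a factor of $m^{-1}$ relative to the generic bound $(m+1)\,m^{(n+j)/2}$, yielding at most $\tfrac{m+1}{m}\,m^{(n+j)/2}$ lifts per level sequence. Multiplying the two estimates produces the claimed inequality.

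The main technical obstacle is the refined preimage bound: making explicit how fixing $v_0=v$ saves a factor of $m^{-1}$ requires revisiting the case-by-case analysis from the proof of Lemma~\ref{count}, attending to the first maximal upward segment emerging from $v$ and checking that the improvement is uniform across all admissible shapes (numbers and lengths of alternating monotone segments) compatible with the level sequence.
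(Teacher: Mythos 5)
Your proposal follows essentially the same route as the paper: factor the count through $\Phi_n$, count admissible level sequences by a reflection argument, and bound the fibers via the alternating up/down decomposition from the proof of Lemma~\ref{count}. Your ballot-type count of $Z_n^{\ast}$ is just the forward (time-reversed) version of the paper's first-passage count of walks from $j$ to $0$, and the arithmetic $\binom{n-1}{\frac{n+j}{2}-1}-\binom{n-1}{\frac{n+j}{2}}=\frac{j}{n}\binom{n}{\frac{n+j}{2}}$ is correct. The one step you defer --- the factor $\frac{m+1}{m}$ in the fiber bound --- is exactly the point that needs care, and it does go through, though the mechanism is not that $v_0$ is fixed (the segment counts in the proof of Lemma~\ref{count} are already per starting vertex) but that the first maximal monotone segment is forced to be upward, since $l_0=0$ and $l_1=1$. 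Concretely, if the decomposition has $p$ upward segments of total length $U=\frac{n+j}{2}$ and $q$ downward segments, then starting upward forces $q=p-1$ (last segment upward) or $q=p$ (last segment downward); in either case the product of the segment bounds from the proof of Lemma~\ref{count} telescopes to at most $(m+1)m^{U-1}=\frac{m+1}{m}m^{\frac{n+j}{2}}$, whereas a path beginning with a downward segment could realize $(m+1)m^{U}$. Note the paper itself simply cites Lemma~\ref{count} for the refined constant even though the lemma's statement only yields $(m+1)m^{\frac{n+j}{2}}$, so the refinement must be extracted from the proof of Lemma~\ref{count} exactly as you anticipate; writing out the two-line telescoping computation above would close your argument completely.
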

   \begin{proof}

   Since each path in $P_n(j;v)$ has no hold time,
   each element of $\Phi_n(P_n(j;v))$ 
   may be viewed a path of the symmetric random walk on $\mathbb Z$ starting at the position $j$
   which hits the origin $0$ at step $n$ for the first time.
The reflection principle for the random walk gives
   \begin{equation}\label{hit2}\#\Phi_n(P_n(j;v))=
   \frac{j}{n}\begin{pmatrix}n\\\frac{n+j}{2}\end{pmatrix}.\end{equation}
By Lemma~\ref{count}, for each $v_0\cdots v_n\in P_n(j;v)$ we have
   \begin{equation}\label{hit3}\#\Phi_n^{-1}(l(v_0)\cdots l(v_n))\leq \frac{m+1}{m}m^{\frac{n+j}{2}}.\end{equation}
   Combining \eqref{hit2} and \eqref{hit3} we obtain the desired inequality.\end{proof}


\subsection{Formulas for Birkhoff sums of the central Jacobian}\label{interpret}
Let $n\geq1$ and let $v_0\cdots v_n$ be a path in the Markov diagram $(\mathcal V,\to)$.
The Birkhoff sum $S_n\phi^c$ on the rectangle $\bigcap_{k=0}^{n}f^{-k}(v_k)$ is a constant,
which we denote by $S_n\phi^c(v_0\cdots v_n)$ with a slight abuse of notation.
From the structure of the Markov diagram described in Section~\ref{Hofbauer-sec}, we derive two useful formulas for this constant.

\begin{itemize}
\item[(i)] If $v_0\cdots v_n\in P_n$, then 
\begin{equation}\label{interpret-eq1}S_n\phi^c(v_0\cdots v_n)=(l(v_0)-l(v_n))\log m.\end{equation}
  \item[(ii)] If $v_0\in\mathcal V_\beta$ and $v_n\in\mathcal V_\beta$ then
\begin{equation}\label{interpret-eq2}S_n\phi^c(v_0\cdots v_n)=\#\{k\in\{0,\ldots,n-1\}\colon\text{$k$ is a hold time of $v_0\cdots v_n$}  \}\log m.\end{equation}
\end{itemize}

Since any path in $(\mathcal V,\to)$ can be written as a concatenation of paths of the above two kinds, the Birkhoff sum of the central Jacobian along any path is kept in track by a combination of the formulas \eqref{interpret-eq1} and \eqref{interpret-eq2}.

\subsection{Stopping time}\label{a-p}
We denote by $|\cdot|$ the Lebesgue measure on $[0,1]$, $[0,1]^2$, $[0,1]^3$. A {\it rectangle} (resp. {\it block}) 
is a product of two (resp. three) 
non-degenerate intervals in $[0,1]$.
For a rectangle $B=B_x\times B_y$ we write \[
|B|_x=|B_x|\ \text{ and }\ |B|_y=|B_y|.\]
Similarly, for a block $B=B_x\times B_y\times B_z$ we write
 \[|B|_x=|B_x|,\ |B|_y=|B_y|\ \text{ and }\ |B|_z=|B_z|.\]

For $p\in [0,1]^2$ and $n\geq1$, 
let $K_n(p)$ denote the maximal rectangle containing $p$ on which $f^n$ is affine.
Clearly 
 $|K_1(p)|_y=1$ for $p\in\Omega^+_\alpha$,
 $|K_1(p)|_y=\frac{1}{m}$ for $p\in\Omega^+_\beta$
  and
 $|K_{n}(p)|_y/|K_{n-1}(p)|_y\in\left\{1,\frac{1}{m}\right\}$ for $n\geq2$.
An integer $n\geq2$ is a {\it cutting time} of $p$
if $|K_n(p)|_y/|K_{n-1}(p)|_y=\frac{1}{m}$.
We define a {\it stopping time}
  \begin{equation}\label{afnew-eq} R(p)=\inf\{n\geq2\colon \text{$n$ is a cutting time of $p$}\}.\end{equation}
  Clearly we have $R(p)=2$ if and only if $p$ is contained in the set
 \[E=\Omega^+_\beta\cap f^{-1}(\Omega^+_\beta).\]

The stopping time is expressed as follows.
Put $S_{0}\phi^c\equiv 0$ for convenience. 
\begin{prop}
\label{character}
For all $p\in[0,1]^2$ we have 
\[R(p)=\begin{cases}
\inf\{n\geq 0
\colon S_n\phi^c(p)=-\log m\text{ and }f^n(p)\in E \}+2
&\text{ on }\Omega^+_\alpha,\\
\inf\{n\geq 0
\colon S_n\phi^c(p)=0\text{ and }f^n(p)\in E \}+2
&\text{ on }\Omega^+_\beta.
\end{cases}\]
\end{prop}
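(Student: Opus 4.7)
The plan is to translate $R(p)$ into a first-passage time for the walk $S_n\phi^c(p)$, and then to identify that passage time with the infimum in the formula via a short backward-induction argument.

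\textbf{Step 1: The $y$-width of $K_n(p)$.} Since $f$ is piecewise affine with $y$-Jacobian $e^{\phi^c}$, equal to $1/m$ on $\Omega^+_\alpha$ and $m$ on $\Omega^+_\beta$, the chain rule yields $|f^k(B)|_y = |B|_y \cdot e^{S_k\phi^c(q)}$ for any rectangle $B$ on which $f^k$ is affine and any $q\in B$. The partition elements immediately give $W_1 := |K_1(p)|_y = 1$ when $p\in\Omega^+_\alpha$ and $W_1 = 1/m$ when $p\in\Omega^+_\beta$. Since no $y$-shrinking occurs before $R(p)$, $|K_{n-1}(p)|_y = W_1$ for all $2\le n\le R(p)$.

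\textbf{Step 2: Characterising the first cutting time.} Each $\Omega^+_{\alpha_i}$ spans $[0,1]$ in $y$, whereas each $\Omega^+_{\beta_i}$ has $y$-width $1/m$. Hence the definition of $K_n(p)$ shows that $n$ is a cutting time iff $f^{n-1}(K_{n-1}(p))$ fails to fit in a single $\Omega^+_\gamma$ in the $y$-direction, which requires both $f^{n-1}(p)\in\Omega^+_\beta$ and $|f^{n-1}(K_{n-1}(p))|_y > 1/m$. Combined with Step 1, $n = R(p)$ iff $f^{n-1}(p)\in\Omega^+_\beta$ and $W_1\cdot e^{S_{n-1}\phi^c(p)} > 1/m$. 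Because $S_k\phi^c$ takes values in $(\log m)\mathbb{Z}$, this rewrites as $S_{n-1}\phi^c(p)\ge\theta$, where $\theta = 0$ when $p\in\Omega^+_\alpha$ and $\theta = \log m$ when $p\in\Omega^+_\beta$. Setting $T := R(p)-1$, it remains to identify $T$ with $n^*+1$, where $n^*$ denotes the infimum appearing in the proposition.

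\textbf{Step 3: Identifying $T = n^*+1$.} The bound $T\le n^*+1$ is immediate: if $f^{n^*}(p)\in E$ and $S_{n^*}\phi^c(p) = \theta-\log m$, then $f^{n^*+1}(p)\in\Omega^+_\beta$ and $S_{n^*+1}\phi^c(p) = \theta$, so $j = n^*+1$ is admissible for $T$. For the reverse, I would inspect step $T-1$. In case (a), $S_{T-1}\phi^c < \theta$: combined with $S_T\phi^c\ge\theta$ and jumps of size $\pm\log m$, this forces $f^{T-1}(p)\in\Omega^+_\beta$ and $S_{T-1}\phi^c = \theta-\log m$; together with $f^T(p)\in\Omega^+_\beta$ this says $f^{T-1}(p)\in E$, so $T-1\ge n^*$. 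The main obstacle is ruling out case (b), where $f^{T-1}(p)\in\Omega^+_\alpha$ and $S_{T-1}\phi^c \ge \theta+\log m$. Here I would propagate the minimality of $T$ backwards: if $f^{T-2}(p)\in\Omega^+_\beta$, then $S_{T-2}\phi^c = S_{T-1}\phi^c - \log m \ge \theta$ would make $T-2$ admissible for $T$, a contradiction; hence $f^{T-2}(p)\in\Omega^+_\alpha$ and $S_{T-2}\phi^c\ge\theta+2\log m$. Iterating, $f^{T-k}(p)\in\Omega^+_\alpha$ and $S_{T-k}\phi^c\ge\theta+k\log m$ for every $1\le k\le T$, contradicting $S_0\phi^c = 0$ (note $\theta\ge 0$). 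Thus only case (a) occurs, giving $T = n^*+1$ and hence $R(p) = n^*+2$. The infinite case is handled implicitly by the same inequalities.
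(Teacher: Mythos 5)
Your proof is correct and follows essentially the same route as the paper's: both translate the cutting-time condition into a first-passage condition for the $\pm\log m$ walk $S_n\phi^c$ combined with membership in $\Omega^+_\beta$ (the paper via Lemma~\ref{chara-lem} after reducing to $p\in\Omega^+_\beta\setminus E$ through $R\circ f=R-1$, you via the threshold $\theta$ treating both regions at once) and then exploit minimality of the first crossing. The only blemish is the final step of your backward induction in case (b): at $k=T$ the ``admissibility for $T$'' argument is unavailable because $j=0$ lies outside the range $j\ge1$ defining $T$, but the contradiction is already reached at $k=T-1$, since $S_1\phi^c=\phi^c(p)\le\log m$ is incompatible with $S_1\phi^c\ge\theta+(T-1)\log m$ for $T\ge2$.
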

Before proceeding let us record two equalities. First,
\begin{equation}\label{afnew-eq20}
f(\Omega^+_\beta\setminus E)=\Omega^+_\alpha.
\end{equation}
The definition of the stopping time
 \eqref{afnew-eq} implies 
 \begin{equation}\label{afnew-eq10}R\circ f=R-1\ \text{
on }\Omega^+_\beta\setminus E.\end{equation}

\begin{proof}[Proof of Proposition~\ref{character}]
 By 
\eqref{afnew-eq20} and \eqref{afnew-eq10}  
it suffices to consider the case $p\in \Omega^+_\beta\setminus E$. 
Clearly we have $R(p)\geq3$.
We claim $S_{R(p)-2}\phi^c(p)=0$, for otherwise \eqref{interpret-eq2} implies
 $S_{R(p)-2}\phi^c(p)>0$, and 
there is $k\in\{2,\ldots,R(p)-1\}$ such that $f^{k-2}(p)\in E$ and $S_{k-2}\phi^c(p)=0$. From Lemma~\ref{chara-lem} below, it follows that $k$ is a cutting time of $p$, a contradiction to the minimality in the definition of $R(p)$. This claim and \eqref{interpret-eq1} together imply $S_{n}\phi^c(p)<0$ for all $n\geq1$ with $n\leq R(p)-3$, and so the desired equality holds.
\end{proof}
\begin{lemma}\label{chara-lem}
Let $p\in\Omega^+_\beta$ and $n\geq2$.
Then $n$ is a cutting time of $p$
if and only if 
$f^{n-2}(p)\in E$
and $S_{n-2}\phi^c(p)\geq0$.
\end{lemma}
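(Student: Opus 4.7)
The plan is direct, based on the chain rule for the $y$-Jacobian of $f$. Since $\phi^c$ equals the logarithm of this Jacobian,
\[
|f^{n-1}(K_{n-1}(p))|_y = |K_{n-1}(p)|_y \cdot \exp(S_{n-1}\phi^c(p)).
\]
Cutting at $n$ means that intersection with the partition element at step $n-1$ reduces the $y$-extent by factor $1/m$. Since $\Omega^+_{\alpha_j}$ has full $y$-range $[0,1]$, this forces $f^{n-1}(p) \in \Omega^+_\beta$.

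For the forward direction, suppose $n$ is a cutting time. I would first exclude $f^{n-2}(p) \in \Omega^+_\alpha$: if $f^{n-2}(p) \in \Omega^+_{\alpha_j}$, then $f^{n-1}(K_{n-1}(p)) \subseteq f(\Omega^+_{\alpha_j}) = [0,1] \times [(j-1)/m, j/m)$ already lies in a single $\beta$-strip, so no $y$-clipping is possible at step $n-1$, contradicting cutting. Hence $f^{n-2}(p) \in \Omega^+_\beta$, and combined with $f^{n-1}(p) \in \Omega^+_\beta$ this gives $f^{n-2}(p) \in E$. The cutting ratio $1/m$ combined with the scaling identity pins down $|f^{n-1}(K_{n-1}(p))|_y = 1$; using $\phi^c(f^{n-2}(p)) = \phi^c(f^{n-1}(p)) = \log m$ together with the bound $|K_{n-1}(p)|_y \leq 1/m$ (valid since $p \in \Omega^+_\beta$, so $|K_1(p)|_y = 1/m$ and $y$-extents only shrink), I would then derive $S_{n-2}\phi^c(p) \geq 0$.

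For the backward direction, assume $f^{n-2}(p) \in E$ and $S_{n-2}\phi^c(p) \geq 0$. Then $S_{n-1}\phi^c(p) = S_{n-2}\phi^c(p) + \log m \geq \log m$. Combining this with the formula for $|K_{n-1}(p)|_y$ coming from the maximality of $K_{n-1}(p)$ under the $\beta$-strip constraints (encoded by the Markov diagram of Section~\ref{Hofbauer-sec}), I would show that $|f^{n-1}(K_{n-1}(p))|_y = 1$. Intersection with the destination $\beta$-strip of $y$-length $1/m$ then clips by factor $1/m$, confirming cutting at $n$.

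The main obstacle is the backward direction, specifically identifying $|K_{n-1}(p)|_y$ precisely in terms of $S_{n-1}\phi^c(p)$ under the lemma's hypotheses. This requires showing that the binding $y$-constraint in the maximal affine rectangle $K_{n-1}(p)$ comes from the most recent relevant $\beta$-visit before $n-1$, which in turn relies on the level-structure analysis of the Markov diagram.
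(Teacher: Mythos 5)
Your ``only if'' direction is correct and essentially complete, and it is a legitimate alternative to the paper's one-line appeal to \eqref{interpret-eq2}: ruling out $f^{n-2}(p)\in\Omega^+_{\alpha_j}$ because $f(\Omega^+_{\alpha_j})$ already sits inside a single $\beta$-strip is exactly the right observation, and the chain
\[
1=|f^{n-1}(K_{n-1}(p))|_y=|K_{n-1}(p)|_y\,e^{S_{n-1}\phi^c(p)}\le \tfrac1m\,e^{S_{n-2}\phi^c(p)}\cdot m=e^{S_{n-2}\phi^c(p)}
\]
gives $S_{n-2}\phi^c(p)\ge0$. (In fact you only need $|f^{n-1}(K_{n-1}(p))|_y>\frac1m$, which follows from the mere occurrence of a $y$-clip, so the $m$-adic alignment is not essential at this point.)

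The ``if'' direction is a genuine gap, and the step you flag as the main obstacle --- that the binding $y$-constraint on $K_{n-1}(p)$ comes from the most recent $\beta$-visit --- cannot be established from the stated hypotheses, because the implication is false as stated. Take $m=2$ and $p=(x_0,y_0)$ with $y_0=\frac1{10}$ and $x_0$ chosen so that the itinerary is $\beta_1\beta_1\alpha_1\alpha_1\beta_1\beta_1$; the $y$-orbit is $\frac1{10},\frac15,\frac25,\frac15,\frac1{10},\frac15$. For $n=6$ one has $f^{4}(p)\in E$ and $S_{4}\phi^c(p)=0$, yet the constraint from $k=1$ already forces the $y$-extent of $K_5(p)$ to be $[0,\frac14]$, and the new constraint at $k=5$ pulls back to that same interval; hence $|K_6(p)|_y=|K_5(p)|_y=\frac14$ and $6$ is not a cutting time (here $|f^{5}(K_5(p))|_y=\frac12$, not $1$). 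What your strategy does prove is the ``if'' direction under the stronger hypothesis $S_{n-2}\phi^c(p)\ge S_k\phi^c(p)$ for every $0\le k\le n-2$ with $f^k(p)\in\Omega^+_\beta$; the hypothesis $S_{n-2}\phi^c(p)\ge0$ only controls the constraint at $k=0$. You should know that the paper's own proof of the ``if'' part has the same hole: it asserts without argument that the component $B$ of $f^{-(n-2)}(\Omega_{\beta_i}^+)$ containing $p$ satisfies $|f^{n-2}(B)|_y=\frac1m$, and in the example above $|f^{4}(B)|_y=\frac14$. The one place the lemma is invoked (the proof of Proposition~\ref{character}) takes $k-2$ to be the first hold time, for which the stronger hypothesis holds automatically, so the downstream argument is repairable; but as a proof of Lemma~\ref{chara-lem} as literally stated, neither your sketch nor the paper's argument can be completed.
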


\begin{proof}
To show the `if' part, suppose
    $n\geq2$, $f^{n-2}(p)\in E$
and $S_{n-2}\phi^c(p)\geq0$.
Take $i\in\{1,\ldots,m\}$ with
$f^{n-2}(p)\in\Omega_{\beta_i}^+$.
Let $B$ denote the connected component of $f^{-n+2}(\Omega_{\beta_i}^+)$ that contains $p$.
Then $B$ is a rectangle, $f^{n-2}|_B$ is affine and $|f^{n-2}(B)|_y=\frac{1}{m}$. Hence  $n$ is a cutting time of $p$.
To show the `only if' part, let $n\geq2$ be a cutting time of $p$. The definition of cutting time implies
      $f^{n-2}(p)\in E$. 
    Since $p\in\Omega^+_\beta$,
     by \eqref{interpret-eq2} we obtain $S_{n-2}\phi^c(p)\geq0$.
\end{proof}

\subsection{Formula for the stopping time in terms of pullbacks}\label{another-sec}
Let $A$ be a non-empty subset of $[0,1]^2$ and let $n\geq1$. A connected component of $f^{-n}(A)$
is called a {\it pullback} of $A$ by $f^n$. If $B$ is a pullback of $A$ by $f^n$ and $f^n|_B$ is affine, we call $B$ an affine pullback of $A$ by $f^n$,
or simply an {\it affine pullback}.
If $A$ is connected and $B$ is an affine pullback of $A$ by $f^n$, then $f^n(B)=A$.

 \begin{prop}
 \label{exist-cor}
For any $p\in(0,1)^2$ such that 
$R(p)$ is finite and $f^{R(p)}(p)\in(0,1)^2$, we have
\[R(p)=\min\left\{\begin{split}&n\geq2\colon\text{there exists an affine pullback $B$ of $(0,1)^2$ by $f^n$}\\
&\quad\quad\quad\ \text{such that }
p\in B\text{ and }|B|_y=\frac{1}{m}|K_1(p)|_y\end{split}\right\}.\]
\end{prop}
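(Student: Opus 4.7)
The plan is to exhibit $B:={\rm int}(K_{R(p)}(p))$ as an affine pullback realizing the minimum, and then to rule out every $n$ with $2\leq n<R(p)$ using the maximality of $K_n(p)$. The definition of cutting time together with \eqref{afnew-eq} forces the sequence $n\mapsto|K_n(p)|_y$ to be non-increasing, with each step either keeping the value or multiplying it by $1/m$, and $R(p)$ is the first $n\geq2$ at which a step-down occurs; hence $|K_n(p)|_y=|K_1(p)|_y$ for $1\leq n<R(p)$ and $|K_{R(p)}(p)|_y=\tfrac{1}{m}|K_1(p)|_y$.

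By Proposition~\ref{character}, $f^{R(p)-2}(p)\in E\subset\Omega_\beta^+$ and $S_{R(p)-2}\phi^c(p)$ equals $-\log m$ or $0$ according to whether $p\in\Omega_\alpha^+$ or $p\in\Omega_\beta^+$; adding the contribution $2\log m$ from the last two $\beta$-steps yields $S_{R(p)}\phi^c(p)=\log(m/|K_1(p)|_y)$ in both cases. Since $f^{R(p)}$ is affine on $K_{R(p)}(p)$ with $y$-expansion factor $e^{S_{R(p)}\phi^c(p)}$, the image rectangle has $y$-length $\tfrac{1}{m}|K_1(p)|_y\cdot m/|K_1(p)|_y=1$, and its $x$-length equals $1$ because every branch of $\tau_a^{R(p)}$ is surjective onto $[0,1]$. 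Thus $f^{R(p)}(K_{R(p)}(p))=[0,1]^2$. Using $p,f^{R(p)}(p)\in(0,1)^2$, the set ${\rm int}(K_{R(p)}(p))$ is a connected component of $f^{-R(p)}((0,1)^2)$ on which $f^{R(p)}$ is affine and which contains $p$, with $y$-width $\tfrac{1}{m}|K_1(p)|_y$; so $R(p)$ belongs to the set on the right.

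For the lower bound, suppose $2\leq n<R(p)$ and $B'$ is an affine pullback of $(0,1)^2$ by $f^n$ containing $p$. Because $B'$ is connected, contains $p$, and $f^n|_{B'}$ is affine, the maximality of $K_n(p)$ gives $B'\subseteq K_n(p)$. Then $(0,1)^2=f^n(B')\subseteq f^n(K_n(p))\subseteq[0,1]^2$ forces $|f^n(K_n(p))|_y=1$, and cancelling the common $y$-expansion factor $e^{S_n\phi^c(p)}$ yields $|B'|_y=|K_n(p)|_y=|K_1(p)|_y$, which contradicts $|B'|_y=\tfrac{1}{m}|K_1(p)|_y$. The main obstacle will lie in the component identification of the previous paragraph: despite the discontinuities of the piecewise affine $f^{R(p)}$, the hypotheses $p,f^{R(p)}(p)\in(0,1)^2$ together with the fact that $f^{R(p)}|_{K_{R(p)}(p)}$ sends $\partial K_{R(p)}(p)$ onto $\partial[0,1]^2$ are precisely what ensures ${\rm int}(K_{R(p)}(p))$ is a full connected component of $f^{-R(p)}((0,1)^2)$, not merely a subset of one.
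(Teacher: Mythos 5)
Your proof is correct and follows essentially the same route as the paper's: both rely on Proposition~\ref{character} to locate the first cutting time, exhibit the witness pullback of $(0,1)^2$ at time $R(p)$ (the paper via the pullback of $\Omega_{\beta_i}^+\cap E$ by $f^{R(p)-2}$ followed by two more steps, you via ${\rm int}(K_{R(p)}(p))$ and a Birkhoff-sum computation of $S_{R(p)}\phi^c(p)$ --- the same object), and compare $y$-widths to exclude smaller $n$. Your explicit minimality argument, using $B'\subseteq K_n(p)$ and $|K_n(p)|_y=|K_1(p)|_y$ for $n<R(p)$, is in fact more detailed than the paper's ``the reverse inequality is obvious,'' and the one fine point you flag at the end (that the candidate set is a full connected component of $f^{-R(p)}((0,1)^2)$, hence genuinely an affine pullback) is treated no more carefully in the paper than in your sketch.
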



\begin{proof}
In the case
$R(p)=2$, we have $p\in E$ and the desired equality is obvious. Suppose $R(p)\geq3$. Then $p\notin E$ holds.
By Proposition~\ref{character}, there exists
 $i\in\{1,\ldots,m\}$ such that
$f^{ R(p)-2}(p)\in\Omega^+_{\beta_i}\cap E$. 
    The pullback of the rectangle
$\Omega_{\beta_i}^+\cap E$
by $f^{R(p)-2}$
that contains $p$, denoted by $B'$, is an affine pullback and satisfies
 $|B'|_y=|K_1(p)|_y$.
  Proposition~\ref{character} implies
 ${\rm int}(\Omega_{\beta_i}^+\cap E)\subset f^{R(p)-2}(B')$.
 In particular,
 $f^{R(p)-1}|_{B'}$ is affine
and ${\rm int}(\Omega^+_\beta)\subset f^{R(p)-1}(B')$.
From the assumption $f^{R(p)}(p)\in(0,1)^2$, there exists an affine pullback $B$ of $(0,1)^2$ by $f^{R(p)}$ satisfying $p\in B\subset B'$ and $|B|_y=\frac{1}{m}|B'|_y=\frac{1}{m}|K_1(p)|_y$. Hence,
$R(p)$ does not exceed the minimum 
 in the right-hand side of the desired equality in the proposition. The reverse inequality is obvious from the property of $B$.
\end{proof}

\subsection{Almost sure finiteness of the stopping time}\label{fin-title}
Clearly, if $a\in(\frac{1}{2m},\frac{1}{m})$ then the set $\{R=\infty\}$ has positive Lebesgue measure. Otherwise this is a null set.

\begin{prop}\label{R-fin}
If $a\in(0,\frac{1}{2m}]$ then the stopping time 
 is finite 
 Lebesgue a.e. on $[0,1]^2$.
\end{prop}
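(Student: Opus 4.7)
The plan is to reduce the claim to recurrence of a simple random walk on $\mathbb{Z}$ with non-negative drift. A preliminary observation is that both the central Jacobian $\phi^c$ and the set $E=\Omega_\beta^+\cap f^{-1}(\Omega_\beta^+)=[ma(2-ma),1]\times[0,1]$ depend only on the $x$-coordinate, so the same is true of $R$; it therefore suffices to work on the base map $\tau_a$ of $[0,1]$. One checks directly that the coarse labels $\xi_k(x)\in\{\alpha,\beta\}$, according as $\tau_a^k(x)\in[0,ma)$ or $[ma,1]$, form under Lebesgue on $[0,1]$ an i.i.d.\ Bernoulli sequence with $P(\alpha)=p:=ma$ and $P(\beta)=q:=1-ma$.

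Under this coding one has $S_n\phi^c=W_n\log m$, where $W_n=\#\{k<n:\xi_k=\beta\}-\#\{k<n:\xi_k=\alpha\}$, and $f^n(p)\in E$ if and only if $\xi_n=\xi_{n+1}=\beta$. Proposition~\ref{character} therefore translates $\{R<\infty\}$ into the event
\[
\mathscr{E}_c=\bigl\{\exists\,n\geq0:\,W_n=c\text{ and }\xi_n=\xi_{n+1}=\beta\bigr\},
\]
with $c=0$ for $p\in\Omega_\beta^+$ and $c=-1$ for $p\in\Omega_\alpha^+$. The goal is to establish $P(\mathscr{E}_0\mid W_0=0)=1$: translation invariance of $W$ then yields the analogous identity for $\mathscr{E}_{-1}$ starting from $W_0=-1$, and the $\Omega_\alpha^+$ case reduces to that one via the strong Markov property at the deterministic time $n=1$ at which $W_1=-1$.

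The main step is a first-step analysis conditioning on the pair $(\xi_0,\xi_1)$, producing
\[
p^*:=P(\mathscr{E}_0\mid W_0=0)=q^2+\bigl(2pq+p^2R_{-2}\bigr)\,p^*,
\]
where $R_{-2}$ denotes the probability that the walk started at $-2$ ever reaches $0$. The hypothesis $a\leq\frac{1}{2m}$ is exactly $p\leq q$, so $W$ has non-negative drift and classical random-walk theory gives $R_{-2}=1$. The identity $1-2pq-p^2=(p+q)^2-2pq-p^2=q^2$ then collapses the equation to $p^*q^2=q^2$, forcing $p^*=1$. The main obstacle lies in this cancellation, which depends delicately on the drift hypothesis: running the same calculation for $a>\frac{1}{2m}$ would yield $R_{-2}=(q/p)^2$ and $p^*=q^2/p^2<1$, consistent with the positive Lebesgue measure of $\{R=\infty\}$ mentioned at the start of this subsection.
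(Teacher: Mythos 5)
Your proof is correct, and it takes a genuinely different route from the paper's. The paper treats the two regimes separately: for $a\in(0,\frac{1}{2m})$ it uses the law of large numbers (the a.e.\ limit \eqref{chia} is positive, while $\limsup_n\frac{1}{n}S_n\phi^c\leq0$ on $\{R=\infty\}$), and for the critical case $a=\frac{1}{2m}$ it runs a much longer argument combining the central limit theorem with Borel--Cantelli (Lemma~\ref{CLT}), a path-counting estimate in the Markov diagram via the reflection principle and Stirling's formula (Lemma~\ref{Q-finite}), and finally an induced full-branched map to which Birkhoff's theorem is applied. Your renewal equation handles both regimes at once: once one notes that $R$ depends only on $x$ and that the $\alpha/\beta$ itinerary of $\tau_a$ is i.i.d.\ Bernoulli with parameters $(ma,1-ma)$ under Lebesgue, Proposition~\ref{character} converts $\{R<\infty\}$ into a hitting event for a simple random walk, and the first-step analysis together with $R_{-2}=1$ (level-crossing for a walk with non-negative drift, which is exactly where the hypothesis $a\leq\frac{1}{2m}$ enters) forces $p^*=1$; I checked the four conditional contributions, the strong Markov step in the $(\alpha,\alpha)$ case, and the reduction of the $\Omega^+_\alpha$ case to the shifted $\mathscr{E}_0$, and all are sound. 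Your argument is shorter, more elementary, and makes the threshold $\frac{1}{2m}$ transparent, as your closing computation of $p^*=q^2/p^2$ for $a>\frac{1}{2m}$ shows. What the paper's heavier route buys is reusable machinery: the path-counting lemmas of Section~\ref{count-sec} that drive Lemma~\ref{Q-finite} also underpin the quantitative tail estimate of Proposition~\ref{tail-eq0}, which your qualitative argument does not provide.
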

\begin{proof}
Proposition~\ref{character} and \eqref{interpret-eq1}, \eqref{interpret-eq2}
together imply 
$\limsup_{n\to\infty}\frac{1}{n}S_n\phi^c(p)\leq0$
for all $p\in\{R=\infty\}$.
 Recall \eqref{chia} that $\frac{1}{n}S_n\phi^c$
converges almost surely to the positive constant $(1-2ma)\log m$.
Hence, if $a\in(0,\frac{1}{2m})$ then $R$ is finite Lebesgue a.e. on $[0,1]^2$.

For the rest of the proof of Proposition~\ref{R-fin} we assume $a=\frac{1}{2m}$. We need two lemmas.

 \begin{lemma}\label{CLT}
For Lebesgue a.e. $x\in[0,1]$, there exists a pair $(M,N)$
 of positive integers 
 such that \[\sup_{n\geq N}\frac{1}{\sqrt{n}}|S_n\phi^c(x,0)|\leq M.\]
 \end{lemma}
 \begin{proof}Put
$v(\phi^c)=\int_{[0,1]}|\phi^c(x,0)|^2{\rm d}x$. Let $(a_k)_{k=2}^\infty$ be the increasing sequence of positive reals satisfying
$(1/\sqrt{2\pi v(\phi^c) })\int_{\mathbb R\setminus[-a_k,a_k]} \exp\left(-\frac{x^2}{2v(\phi^c)}\right){\rm d}x=1/k^2$ for all $k\geq2$.
For $k\geq2$ and $n\geq1$, define
\[G_{k,n}=\left\{x\in[0,1]\colon \frac{1}{\sqrt{n}}S_n\phi^c(x,0)\in [-a_k,a_k]\right\}.\]
The central limit theorem applied to the sequence of independently identically distributed random variables
$x\in[0,1]\mapsto \phi^c(f^j(x,0))$ $(j=0,1,\ldots)$ with mean $0$ and variance $v(\phi^c)$ yields
     \[\lim_{n\to\infty}|G_{k,n}|=\frac{1}{\sqrt{2\pi v(\phi^c) }}\int_{[-a_k,a_k]} \exp\left(-\frac{x^2}{2v(\phi^c) }\right){\rm d}x=1-\frac{1}{k^2}.\]
Choose a sequence $(n_k)_{k=2}^\infty$ of positive integers such that $\sum_{k=2}^\infty|[0,1]\setminus\bigcup_{n=n_k}^\infty G_{k,n}|<\infty$.
 By Borel-Cantelli's lemma,      \begin{equation}\label{BC}\left|\limsup_{k\to\infty}\left([0,1]\setminus \bigcup_{n=n_k}^\infty G_{k,n}\right)\right|=0.\end{equation}

    Let $G$ denote the set of $x\in[0,1]$ for which there is no positive integer pair $(M,N)$ satisfying $\sup_{n\geq N}|S_n\phi^c(x,0)|/\sqrt{n}\leq M$.
    For 
     any $x\in G$ we have $|S_n\phi^c(x,0)|/\sqrt{n}\to\infty$ as $n\to\infty$.
     Hence, for any $k\geq2$ there is $n\geq n_k$ such that
$|S_n\phi^c(x,0)|/\sqrt{n}> a_k$, and so
$x\in [0,1]\setminus \bigcup_{n=n_k}^\infty G_{k,n}$.
Since $k\geq2$ is 
arbitrary, \eqref{BC} yields $|G|=0$, which verifies 
Lemma~\ref{CLT}.
 \end{proof}

 Define $Q\colon [0,1]^2\setminus E\to\mathbb Z_+\cup\{\infty\}$ by
 \[Q(p)=\begin{cases}
\inf\{n\geq 1
\colon S_n\phi^c(p)=-\log m\text{ and }f^n(p)\in \Omega^+_\beta \}
&\text{ on }\Omega^+_\alpha,\\
\inf\{n\geq 1
\colon S_n\phi^c(p)=0\text{ and }f^n(p)\in \Omega^+_\beta \}
&\text{ on }\Omega^+_\beta\setminus E.
\end{cases}\]
 \begin{lemma}\label{Q-finite}
$Q$ is finite
Lebesgue a.e. on $[0,1]^2\setminus E$.\end{lemma}
\begin{proof}
By \eqref{afnew-eq20}
and $Q\circ f=Q-1$ on $\Omega^+_\beta\setminus E$,
 it suffices to show that $Q$ is finite
Lebesgue a.e. on
$\Omega^+_\alpha$.
Recall that $f_{\frac{1}{2m}}$ is a skew product over the piecewise affine fully branched map $\tau_{\frac{1}{2m}}$ in \eqref{Fa-1d} that preserves the Lebesgue measure on $[0,1]$,
and $\phi^c$ is constant on the sets $\{x\}\times[0,1]$, $x\in[0,1]$.
By Lemma~\ref{CLT},
 for Lebesgue a.e. $p\in[0,1]^2$ there exists a positive integer pair $(M,N)$ satisfying $\sup_{n\geq N}|S_n\phi^c(p)|/\sqrt{n}\leq M$.
 Moreover, the definition of $Q$ and the formulas \eqref{interpret-eq1}, \eqref{interpret-eq2}
together imply 
 $\sup_{k\geq1}S_k\phi^c(p)\leq-\log m$
 for all $p\in \Omega^+_\alpha\cap\{Q=\infty\}$.
 Hence, Lebesgue a.e. point in
$\Omega^+_\alpha\cap\{Q=\infty\}$ is contained in the set
 \begin{equation}\label{meas-inc}W=\bigcup_{M=1}^\infty\bigcup_{N=1}^\infty\bigcap_{n=N}^\infty W_{M,n},\end{equation}
 where $M$, $n$ are positive integers and \[W_{M,n}=\left\{p\in \Omega^+_\alpha\colon \sup_{k\geq 1}S_k\phi^c(p)\leq -\log m\
\text{ and }\ S_n\phi^c(p)\geq-M\sqrt{n}\right\}.\] 

 It suffices to show that $W$ is a null set.
For each $M\geq1$ we have
\[W_{M,n}\subset\bigcup_{v\in\mathcal L_0\setminus\mathcal V_\beta}\bigcup_{j=1}^{\lfloor M\sqrt{n}\rfloor} \bigcup_{v_0\cdots v_n\in P_n(j;v)}  \bigcap_{k=0}^{n}f^{-k}(v_k),\]
where $\lfloor\cdot\rfloor$ denotes the floor function.
Using Lemma~\ref{1st-passage} to bound $\#P_n(j;v)$
and the identity $|\bigcap_{k=0}^{n}f^{-k}(v_k)|_x=2^{-n}m^{-\frac{n+j}{2}}$ for each path
$v_0\cdots v_n$ in $P_n(j;v)$, we have
\[\begin{split}|W_{M,n}|&\leq\sum_{v\in\mathcal L_0\setminus\mathcal V_\beta}\sum_{v_0\cdots v_n\in P_n(j;v)}\left|\bigcap_{k=0}^{n}f^{-k}(v_k)\right|\\
&\leq\sum_{j=1}^{\lfloor M\sqrt{n}\rfloor}\frac{j}{n}\begin{pmatrix}n\\
\frac{n+j}{2}\end{pmatrix} \frac{m+1}{m}m^{\frac{n+j}{2}}2^{-n}m^{-\frac{n+j}{2}}
\leq n^{-\frac{3}{2}}\sum_{j=1}^{\lfloor M\sqrt{n}\rfloor}j\leq\frac{M^2}{\sqrt{n}},
\end{split}\]
for all sufficiently large $n$.
For the third inequality we have used Stirling's formula for factorials to evaluate the binomial coefficient.
This yields
$|\bigcap_{n=N}^\infty W_{M,n}|\leq\inf_{n\geq N}|W_{M,n}|=0$ for all $M$, $N\geq1$, and therefore
\[\begin{split}\left|W\right|&\leq\sum_{M=1}^\infty\sum_{N=1}^\infty
\left|\bigcap_{n=N}^\infty W_{M,n}\right|=0,\end{split}\]
as required. This completes the proof of Lemma~\ref{Q-finite}.
\end{proof}

By virtue of \eqref{afnew-eq20}, \eqref{afnew-eq10} and $R=2$ on $E$, it suffices to show
that $R$ is finite Lebesgue a.e. on 
$\Omega^+_\beta\setminus E$.
Let $2\mathbb Z_+$ denote the set of positive even integers. 
If $p\in\Omega^+_\beta\setminus E$ and
 $Q(p)$ is finite then $Q(p)\in 2\mathbb Z_+$.
  Put $M_1=E$, and
 $M_k=\Omega^+_\beta\cap\{Q=k\}$
 for $k\in 2\mathbb Z_+$.
 For each $k\in \{1\}\cup 2\mathbb Z_+$,
$M_k$ is written as a finite union $M_k=\bigcup_\ell M_{k,\ell}$ of pairwise disjoint rectangles
$M_{k,\ell}$ with the properties that
$M_{k,\ell}\subset\Omega^+_{\beta_i}$ for some $i\in\{1,\ldots,m\}$, $|M_{k,\ell}|_y=\frac{1}{m}$, 
$f^{k}|_{M_{k,\ell}}$ is affine and
 ${\rm int}(\Omega^+_{\beta_j})\subset f^{k}(M_{k,\ell})\subset\Omega^+_{\beta_j}$ for some $j\in\{1,\ldots,m\}$.
Lemma~\ref{Q-finite} gives
$|\Omega^+_\beta\setminus\bigcup_{k\in\{1\}\cup 2\mathbb Z_+} M_k|=0$.

The map $H\colon\bigcup_{k\in\{1\}\cup2\mathbb Z_+}M_k\to \Omega^+_\beta$ given by $H(x,y)=f^{k}(x,y)$ for $(x,y)\in M_k$
has the skew product form \[H(x,y)=(H_1(x),H_2(x,y)).\] The map $H_1$ is a piecewise affine map on the interval $[ma,1]$ with infinitely many full branches,
leaving the normalized Lebesgue measure on the interval invariant and ergodic.
From Birkhoff's ergodic theorem for $H_1$, 
$n_1(p)=\inf\{n\geq1\colon H^n(p)\in E\}$ is finite
for Lebesgue a.e. $p\in \Omega^+_\beta\setminus E$. Moreover,
 \eqref{interpret-eq1} implies $S_{n_1(p)}\phi^c(p)=0$
 for $p\in \Omega^+_\beta\setminus E$.
This together with Proposition~\ref{character} 
 implies $R(p)=n_1(p)+2<\infty$
 for Lebesgue a.e. $p\in\Omega^+_\beta\setminus E$. The proof of Proposition~\ref{R-fin} is complete.
\end{proof}

\subsection{An induced Markov map}\label{ind-exp-sec}
For each $n\geq1$
let $\mathscr{P}_{n}$ denote 
the collection of affine pullbacks 
of $(0,1)^2$ by $f^n$ which are contained in $\{R=n\}$.
We set \[\mathscr{P}=\bigcup_{n=1}^\infty\mathscr{P}_n.\]
Elements of $\mathscr{P}$ are pairwise disjoint, and the stopping time is constant on each element. 
We now define an induced map $f^{R}\colon \bigcup_{\omega\in\mathscr{P}}\omega\to[0,1]^2$ by $f^{R}|_\omega=f^{R|_\omega}|_\omega$ on each $\omega\in\mathscr{P}$, and put 
\[\Delta_0^+=\bigcap_{n=0}^\infty(f^{R})^{-n}\left(\bigcup_{\omega\in\mathscr{P} }\omega\right).\]  
 Partition $\Delta^+_0$ into $\{\omega\cap\Delta^+_0\}_{\omega\in\mathscr{P}}$,
 label the partition elements with an integer $i\geq1$, and
write
$\{\Delta^+_{0,i}\}_{i=1}^\infty=\{\omega\cap\Delta^+_0\}_{\omega\in\mathscr{P}}$
and $R_i=R|_{\Delta^+_{0,i}}.$
We have $\Delta^+_{0}=\bigcup_{i=1}^\infty\Delta^+_{0,i}$,
and $f^{R}$ maps each $\Delta_{0,i}^+$ affinely onto $\Delta_0^+$.
Proposition~\ref{R-fin} implies $|\Delta_0^+|=1$ for $a\in(0,\frac{1}{2m}]$.

\subsection{Stopping time estimates}\label{stop-sec}
For $a\in(0,\frac{1}{m})$ put
\[\chi(a)=-\log\sqrt{a(1-ma)}.\]
Note that $\chi(a)\geq\log\sqrt{4m}>0$.
Moreover we have $\sqrt{4m}e^{-\chi(a)}\leq1$, and the equality holds if and only if $a=\frac{1}{2m}$.
\begin{prop}\label{tail-eq0}There exists $n_0\geq1$ such that
for all $a\in(0,\frac{1}{2m}]$ and all $n\geq n_0$ we have
\[|\{R=n+2\}|\leq n^{-\frac{3}{2}}(\sqrt{4m}e^{-\chi(a)})^n.\]
In particular, if $a\in(0,\frac{1}{2m})$ then $|\{R=n+2\}|$ decays exponentially in $n$.
    \end{prop}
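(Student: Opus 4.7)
The plan is to unfold $\{R = n+2\}$ into cylinders of the Markov diagram $(\mathcal V,\to)$ and combine a first-passage path count (in the spirit of Lemma~\ref{1st-passage}) with the exact Lebesgue measure of each cylinder. The target bound $n^{-3/2}(\sqrt{4m}\,e^{-\chi(a)})^n = n^{-3/2}(\sqrt{4m\,a(1-ma)})^n$ factorizes transparently: $(\sqrt{4m})^n = 2^n m^{n/2}$ counts admissible paths (with $O(2^n/\sqrt{n})$ walk skeletons and $O(m^{n/2})$ vertex choices per skeleton), $(\sqrt{a(1-ma)})^n = e^{-n\chi(a)}$ is the measure of each cylinder (purely multiplicative since $f_a$ is piecewise affine, with $x$-contractions $a$ or $1-ma$), and the residual $n^{-1}$ is the ballot normalization.

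First I would use Proposition~\ref{character} together with the general identity
\[
S_k\phi^c(p) = \bigl(l(v_0) - l(v_k) + h_k\bigr)\log m,
\]
where $h_k$ counts hold times in $v_0\cdots v_k$ (a direct synthesis of \eqref{interpret-eq1} and \eqref{interpret-eq2}). A point $p \in \Omega_\alpha^+$ with $R(p) = n+2$ then corresponds uniquely to a path $v_0 v_1\cdots v_{n+1}$ in the Markov diagram with $v_0 \in \mathcal V_\alpha$, $v_n, v_{n+1} \in \mathcal V_\beta$, and no hold times in $v_0\cdots v_n$; an analogous description with $v_0 \in \mathcal V_\beta$ and $v_1 \in \mathcal V_\alpha$ holds for $p \in \Omega_\beta^+\setminus E$. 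The first-hit nature of $R$ translates exactly into the no-hold-times condition, because the earliest hold time $j$ would satisfy $h_j = 0 = l(v_j)$ and $v_j, v_{j+1} \in \mathcal V_\beta$, triggering the stopping condition at $k=j$.

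Next I would count the admissible paths and bound their cylinder measures. The level sequence $(l(v_k))_{k=0}^n$ is a nearest-neighbor walk on $\mathbb Z_{\geq -1}$ starting at $-1$ or $0$ and terminating at $0$; the reflection argument underlying Lemma~\ref{1st-passage} applied with $j=1$ yields $O(n^{-3/2} 2^n)$ such sequences. For each sequence, the vertex type along the path is forced by the step direction (up from $\alpha$-type, down from $\beta$-type), and the number of vertex paths is at most $\tfrac{m+1}{m}\, m^{n/2}$ by the scheme of Lemma~\ref{count}: an $\alpha$-type successor offers $m$ choices while a $\beta$-type successor offers just one. Each cylinder $\bigcap_{k=0}^{n+1} f^{-k}(v_k)$ has Lebesgue measure bounded by a constant multiple of $(a(1-ma))^{n/2} = e^{-n\chi(a)}$, since the path makes about $n/2$ steps in each of $\Omega_\alpha^+$ and $\Omega_\beta^+$, contracting $x$ by $a^{n/2}(1-ma)^{n/2}$ while the $y$-dimension contributes a bounded factor. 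Assembling,
\[
|\{R = n+2\}| \leq C\cdot n^{-3/2}\cdot 2^n\cdot m^{n/2}\cdot e^{-n\chi(a)} = C\cdot n^{-3/2}(\sqrt{4m}\,e^{-\chi(a)})^n,
\]
and choosing $n_0$ large enough absorbs the constant $C$. The "in particular" assertion follows at once, since $\sqrt{4m}\,e^{-\chi(a)} = \sqrt{4m\,a(1-ma)} < 1$ strictly for $a \in (0, \tfrac{1}{2m})$.

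The main technical obstacle is the clean combinatorial analysis: verifying that the first-hit stopping condition translates precisely into "no hold times in $v_0\cdots v_n$" (with no extra constraint such as "levels stay positive in between"), and pinning down the ballot-type count tightly enough to deliver the $n^{-3/2}$ factor — rather than the weaker $n^{-1/2}$ — which is essential for the subsequent tower construction in Section~4.
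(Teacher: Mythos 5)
Your proposal is correct and follows essentially the same route as the paper: the paper organizes the argument through the induced partition $\{\Delta^+_{0,i}\}$ and the path class $P_n^*$ (Lemmas~\ref{area} and \ref{path-number}), bounding $|\{R=n+2\}|$ by $\#P_n^*\cdot e^{-\chi(a)n}$ with $\#P_n^*$ controlled by the reflection principle for level sequences times the $m^{n/2}$ vertex-refinement count of Lemma~\ref{count}, exactly as you describe. The only cosmetic differences are that the paper states the no-hold-time/endpoint characterization implicitly via $P_n^*$ rather than deriving it from Proposition~\ref{character} as you do, and that your final absorption of the multiplicative constant into $n^{-3/2}$ is no more (and no less) delicate than the paper's own Stirling step.
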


To prove Proposition~\ref{tail-eq0}, we estimate the size of each element of the partition
$\{\Delta^+_{0,i}\}_{i=1}^\infty$,
and the cardinality of the set of elements with a given stopping time.
\begin{lemma}\label{area}
For all $a\in(0,\frac{1}{2m}]$ and all $i\geq1$,
    \[
 |\Delta^+_{0,i}|\leq
      \exp(-\chi(a)(R_i-2)).\]
\end{lemma}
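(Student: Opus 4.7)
The plan is to compute $|\Delta^+_{0,i}|$ exactly from the piecewise affine structure of $f_a$ and then to bound it by $(a(1-ma))^{(R_i-2)/2}$ via an elementary estimate that uses $a\leq\tfrac{1}{2m}$.

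Fix $p\in\Delta^+_{0,i}$ and set $R=R_i$, $n_\alpha:=\#\{k\in\{0,\ldots,R-1\}\colon f^k(p)\in\Omega^+_\alpha\}$ and $n_\beta:=R-n_\alpha$. Since $f^{R-2}(p)\in E\subset\Omega^+_\beta\cap f^{-1}(\Omega^+_\beta)$, both $f^{R-2}(p)$ and $f^{R-1}(p)$ lie in $\Omega^+_\beta$, so two of the $R$ visits are predetermined. Combining this with Proposition~\ref{character} and the formulas \eqref{interpret-eq1}--\eqref{interpret-eq2} applied to $S_{R-2}\phi^c(p)$, I would deduce $(n_\alpha,n_\beta)=((R-1)/2,(R+1)/2)$ if $p\in\Omega^+_\alpha$ and $(n_\alpha,n_\beta)=((R-2)/2,(R+2)/2)$ if $p\in\Omega^+_\beta$; the required parity of $R$ is then automatic.

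Because $\tau_a$ expands each branch of length $a$ by $1/a$ and the branch of length $1-ma$ by $1/(1-ma)$, and $f^R$ maps the rectangle $\Delta^+_{0,i}$ affinely onto $(0,1)^2$, I would read off $|\Delta^+_{0,i}|_x=a^{n_\alpha}(1-ma)^{n_\beta}$. Proposition~\ref{exist-cor} supplies $|\Delta^+_{0,i}|_y=\tfrac{1}{m}|K_1(p)|_y$, which equals $1/m$ if $p\in\Omega^+_\alpha$ and $1/m^2$ if $p\in\Omega^+_\beta$ (equivalently, this follows directly from $|\Delta^+_{0,i}|_y=m^{n_\alpha-n_\beta}$). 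In both starting cases the product then factors as
\[
|\Delta^+_{0,i}|=C(a,m)\cdot\bigl(a(1-ma)\bigr)^{(R-2)/2},
\]
where $C(a,m)=\tfrac{1-ma}{m}\sqrt{a(1-ma)}$ in the first case and $C(a,m)=\tfrac{(1-ma)^2}{m^2}$ in the second.

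It remains to verify $C(a,m)\leq 1$. The second bound is immediate from $1-ma<1\leq m$. For the first, since $a\mapsto a(1-ma)$ attains its maximum $1/(4m)$ on $(0,\tfrac{1}{2m}]$ at $a=\tfrac{1}{2m}$, one has $\sqrt{a(1-ma)}\leq 1/(2\sqrt{m})$, hence $C(a,m)\leq\tfrac{1}{m}\cdot\tfrac{1}{2\sqrt{m}}=\tfrac{1}{2m^{3/2}}\leq 1$. I expect no genuine obstacle here: the argument is purely combinatorial once the counts $(n_\alpha,n_\beta)$ are pinned down, and the final inequality is elementary. The only care needed is the separate handling of the two starting cases $p\in\Omega^+_\alpha$ and $p\in\Omega^+_\beta$, which differ by a single extra visit to $\Omega^+_\beta$ in the initial step.
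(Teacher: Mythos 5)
Your proof is correct and takes essentially the same route as the paper's: both pin down the visit counts $(n_\alpha,n_\beta)=(\tfrac{R-1}{2},\tfrac{R+1}{2})$ or $(\tfrac{R-2}{2},\tfrac{R+2}{2})$ via Proposition~\ref{character} (or \ref{exist-cor}) and the Birkhoff-sum formulas, read off $|\Delta^+_{0,i}|_x=a^{n_\alpha}(1-ma)^{n_\beta}$ and $|\Delta^+_{0,i}|_y\in\{\tfrac1m,\tfrac1{m^2}\}$ from the affine structure, and finish with an elementary inequality. The only cosmetic difference is that the paper bounds $|\omega|_x$ alone by $\exp(-\chi(a)(R_i-2))$ and then uses $|\omega|_y\leq1$, whereas you keep the exact product and verify $C(a,m)\leq1$.
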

\begin{proof}
 For $i\geq1$ write 
$\Delta_{0,i}^{+}=\omega\cap\Delta_0^+$,
$\omega\in\mathscr P$ and put
\[\begin{split}R_{i,\alpha}&=\#\left\{0\leq k\leq R_i-1\colon f^k(\omega)\subset\Omega^+_\alpha\right\},\\
R_{i,\beta}&=\#\left\{0\leq k\leq R_i-1\colon f^k(\omega)\subset\Omega^+_\beta\right\}.\end{split}\]
Clearly we have $R_{i,\alpha}+R_{i,\beta}=R_i.$
 Proposition~\ref{exist-cor} implies the following:
 \begin{itemize}
\item[(i)] If $\omega\subset\Omega^+_\alpha$, then 
 $|\omega|_y=\frac{1}{m}$ and
  $S_{R_i}\phi^c=\log m$ on $\omega$.
  In particular, $R_i$ is odd and
   \[R_{i,\alpha}=\frac{1}{2}(R_i-1)\ \text{ and }\ R_{i,\beta}=\frac{1}{2}(R_i+1).\]

\item[(ii)] If $\omega\subset\Omega^+_\beta$, then 
  $|\omega|_y=\frac{1}{m^2}$ and
   $S_{R_i}\phi^c=2\log m$ 
   on $\omega$.
   In particular, $R_i$ is even and
      \[R_{i,\alpha}=\frac{1}{2}(R_i-2)\ \text{ and }\ R_{i,\beta}=\frac{1}{2}(R_i+2).\] 
      \end{itemize}
      See \textsc{Figure}~\ref{fig-image}.
We also have
 $\log |\omega|_x=R_{i,\alpha}\log a+R_{i,\beta}\log(1-ma).$
 Combining this with the above (i) (ii) yields
$|\omega|_x\leq
\exp(-\chi(a)(R_i-2)).$
 Since
$|\Delta^+_{0,i}|=|\omega|_x
|\omega|_y$, we obtain the desired inequality.
 \end{proof}


\begin{figure}
\begin{center}
\includegraphics[height=5cm,width=15cm]
{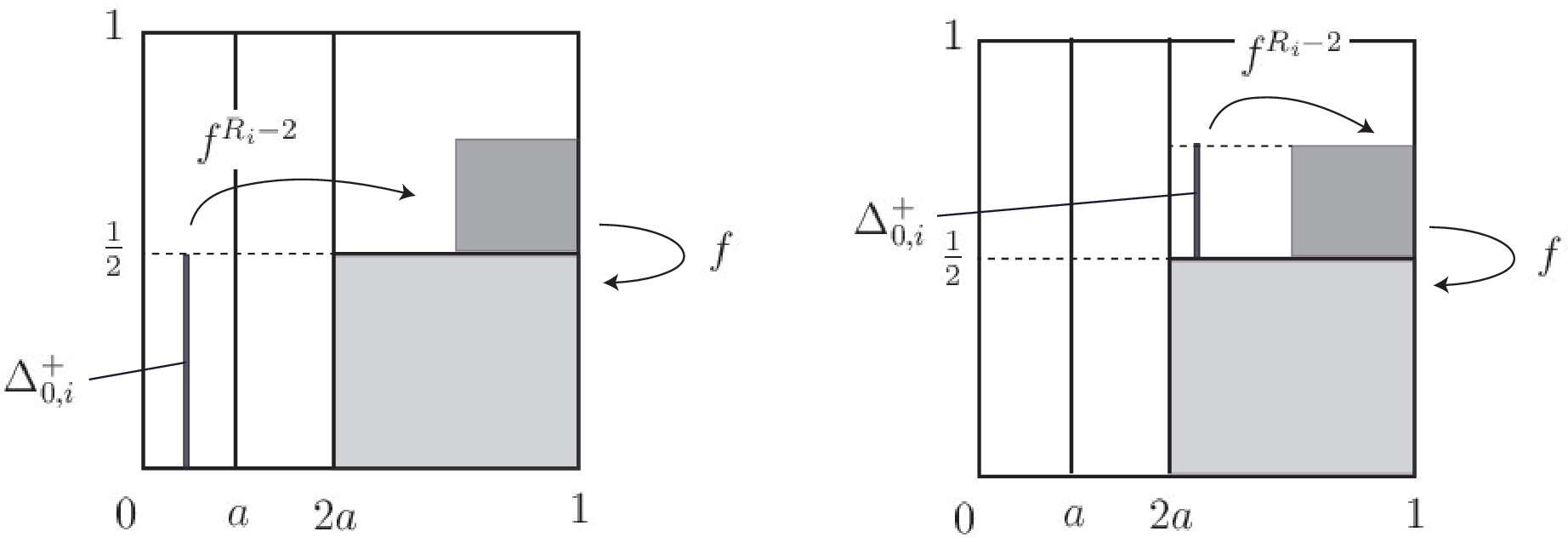}
\caption
{The images of
$\Delta_{0,i}^+$, $i\geq1$ for $m=2$: $\Delta_{0,i}^+\subset\Omega^+_\alpha$ (left); $\Delta_{0,i}^+\subset\Omega^+_\beta$ (right);
$|[0,1]^2\setminus f^{R_i}(\Delta_{0,i}^+)|=0$ in both cases.}\label{fig-image}
\end{center}
\end{figure}

 For each $n\geq1$, define
  \[P_n^*=\begin{cases}\{v_0\cdots v_{n}\in P_n\colon l(v_0)=-1\text{ and }
 l(v_{n})=0 \}&\text{ if $n$ is odd,}\\
 \{v_0\cdots v_{n}\in P_n\colon l(v_0)=0\text{ and }
 l(v_{n})=0 \}&\text{ if $n$ is even.}\end{cases}\]

 \begin{lemma}\label{path-number}
    For all $n\geq2$ we have      
        \[\# P_n^*\leq
       \frac{2(m+1)}{n+1}\begin{pmatrix}n+2\\\lfloor\frac{n+4}{2}\rfloor\end{pmatrix}m^{\lfloor\frac{n+1}{2}\rfloor}.\]
   \end{lemma}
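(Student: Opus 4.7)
The plan is to combine the projection $\Phi_n$ to lattice walks on $\mathbb Z$ (the tool behind Lemma \ref{count}) with a reflection-principle count, and then simplify the resulting binomial coefficients. First, any path $v_0 \cdots v_n \in P_n^*$ projects under $\Phi_n$ to a walk in $Z_n$ whose values lie in $\{-1, 0, 1, \ldots\}$ (since $l\colon \mathcal V \to \mathbb Z$ takes values only in $\{-1,0,1,\ldots\}$), starting at $l(v_0) = -1$ when $n$ is odd and $l(v_0) = 0$ when $n$ is even, and ending at $l(v_n) = 0$ in both cases. Lemma \ref{count} bounds each fibre by $(m+1) m^{(n+l(v_n)-l(v_0))/2} = (m+1) m^{\lfloor(n+1)/2\rfloor}$, so
\[
\#P_n^* \leq (m+1)\, m^{\lfloor (n+1)/2 \rfloor} \cdot \#\Phi_n(P_n^*).
\]

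Next, I would bound $\#\Phi_n(P_n^*)$ above by the total number of $\pm 1$ lattice walks on $\mathbb Z$ of length $n$ with the prescribed endpoints that remain $\geq -1$ throughout. Applying the reflection principle at level $-2$ gives the exact counts $\binom{n}{(n+1)/2} - \binom{n}{(n+3)/2}$ in the odd case (total walks from $-1$ to $0$ minus walks from $-3$ to $0$) and $\binom{n}{n/2} - \binom{n}{n/2+2}$ in the even case (total walks from $0$ to $0$ minus walks from $-4$ to $0$).

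The final step is to verify algebraically that both expressions are at most $\tfrac{2}{n+1}\binom{n+2}{\lfloor(n+4)/2\rfloor}$. For $n = 2k$ the difference simplifies to $\tfrac{2(2k+1)}{(k+2)(k+1)}\binom{2k}{k}$ while the target simplifies to $\tfrac{4}{k+2}\binom{2k}{k}$, so the inequality reduces to $2k+1 \leq 2(k+1)$; the odd case is entirely analogous, reducing to a factor comparison of $\tfrac{4}{n+3}\binom{n}{(n+1)/2}$ against $\tfrac{8(n+2)}{(n+3)(n+1)}\binom{n}{(n+1)/2}$, which is trivially satisfied. Multiplying through then gives the desired bound. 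The main obstacle is purely bookkeeping: matching the reflection-principle count to the clean binomial form $\binom{n+2}{\lfloor(n+4)/2\rfloor}$ with prefactor $\tfrac{2}{n+1}$; no deeper idea beyond Lemma \ref{count} and the reflection principle is needed.
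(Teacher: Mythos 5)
Your proposal is correct and follows essentially the same route as the paper: project via $\Phi_n$, bound each fibre by Lemma~\ref{count} (giving the factor $(m+1)m^{\lfloor(n+1)/2\rfloor}$), and count the projected walks by the reflection principle. The only difference is cosmetic — you apply one reflection at level $-2$ to get the exact count of walks staying $\geq-1$ as a difference of two binomial coefficients, whereas the paper prepends one or two steps and invokes the ballot-type formula for $Z^*_{n+1}(-2,0)$ or $Z^*_{n+2}(-2,0)$; your subsequent algebra checking that both counts are dominated by $\tfrac{2}{n+1}\binom{n+2}{\lfloor(n+4)/2\rfloor}$ is accurate.
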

 \begin{proof}  
For integers $n$, 
$s$, $t$ with $n\geq1$ and $s\leq t$,
let 
\[Z_n(s,t)=\{l_0\cdots l_n\in Z_n\colon l_0=s,\ l_n=t\}.\]
If $s< t$, then let
\[Z_n^*(s,t)=\{l_0\cdots l_n\in Z_n(s,t)\colon 
l_k>s \text{ for } k\in\{1,\ldots,n\}\}.\]
Note that
$Z_n^*(s,t)\neq\emptyset$ if and only if
 $t-s\equiv n$ mod $2$. 
The reflection principle of the symmetric random walk on $\mathbb Z$ 
gives
\begin{equation}\label{z0pas}\#Z_n^*(s,t)=\frac{t-s}{n}\begin{pmatrix}n\\\frac{n+
t-s}{2}\end{pmatrix}.\end{equation}

 By Lemma~\ref{count}, for each $v_0\cdots v_n\in P_n^*$ we have
\begin{equation}\label{z0paseq1}\#\Phi_n^{-1}(l(v_0)\cdots l(v_n))\leq\begin{cases} (m+1)m^{\frac{n+1}{2}}&\text{ if $n$ is odd,}\\
(m+1)m^{\frac{n}{2}}&\text{ if $n$ is even.}\end{cases}\end{equation}
Suppose $n\geq2$ is odd. For each $v_0\cdots v_n\in P_n^*$ we have $l(v_0)\cdots l(v_n)\in  Z_n(-1,0).$
Since the map
  $l_0\cdots l_{n}\in Z_n(-1,0)\mapsto  (-2)l_0\cdots l_{n}\in Z_{n+1}^*(-2,0)$ is injective, \eqref{z0pas} gives
\begin{equation}\label{z0paseq2}\#\Phi_n(P_n^*)\leq\#Z_{n+1}^*(-2,0)\leq\frac{2}{n+1}\begin{pmatrix}n+1\\\frac{n+3}{2}\end{pmatrix}.\end{equation}
Suppose
  $n\geq2$ is even. Similarly, for each $v_0\cdots v_n\in P_n^*$ we have $l(v_0)\cdots l(v_n)\in Z_n(0,0).$
Since the map
  $l_0\cdots l_{n}\in Z_n(0,0)\mapsto  (-2)(-1)l_0\cdots l_{n}\in Z_{n+2}^*(-2,0)$ is injective,  \eqref{z0pas} gives
 \begin{equation}\label{z0paseq3}\#\Phi_n(P_n^*)\leq
 \#Z_{n+2}^*(-2,0)\leq \frac{2}{n+2}\begin{pmatrix}n+2\\\frac{n+4}{2}\end{pmatrix}.\end{equation}
  Combining \eqref{z0paseq1}, \eqref{z0paseq2} and \eqref{z0paseq3}
 yields the desired inequality in the lemma.
   \end{proof}

\begin{proof}[Proof of Proposition~\ref{tail-eq0}]
Let $n\geq2$. For each $i\geq1$ 
    with $R_i=n+2$, there exists a unique
    path $v_0\cdots v_{n}\in P_n^*$ such that   
     $f^k(\Delta^+_{0,i})\subset v_k$ for $0\leq k\leq n$. 
By Lemmas~\ref{area} and \ref{path-number} we have
  \[
  \begin{split}|\{R=n+2\}|&=\sum_{i\geq1\colon R_i=n+2 }|\Delta^+_{0,i}|\leq\#P_n^*e^{-\chi(a)n}\\
  &\leq \frac{2(m+1)}{n+1}\begin{pmatrix}n+2\\
  \lfloor\frac{n+4}{2}\rfloor\end{pmatrix}m^{\lfloor\frac{n+1}{2}\rfloor}
 e^{-\chi(a)n}\leq n^{-\frac{3}{2}}(\sqrt{4m}e^{-\chi(a)})^n,\end{split}\]
provided $n$ is large enough.
    To deduce the last inequality we have evaluated the binomial coefficient using Stirling's formula for factorials.
  \end{proof}


  \subsection{Symbolic dynamics}\label{code-sec}
For $a,b\in(0,\frac{1}{m})$ we put
\[\Lambda_a=\bigcap_{n=0 }^{\infty} f_a^{-n}\left(\bigcup_{\gamma\in D}{\rm int}( \Omega_\gamma^+)\right)\quad\text{ and }\quad \Lambda_{a,b}=\bigcap_{n=-\infty }^{\infty}f_{a,b}^{-n}\left(\bigcup_{\gamma\in D}{\rm int}(\Omega_\gamma)\right).\]
 Define coding maps $\pi_{a}\colon (x,y)\in \Lambda_a\mapsto
(\omega_n)_{n\in\mathbb Z_+}\in D^{\mathbb Z_+}$ and $\pi_{a,b}\colon (x,y,z)\in \Lambda_{a,b}\mapsto
 (\omega_n)_{n\in\mathbb Z}\in D^{\mathbb Z}$ by \[(x,y)\in 
\bigcap_{n=0 }^\infty f_a^{-n}({\rm int}(
\Omega_{\omega_n}^+))\ \text{ and }\ 
 (x,y,z)\in\bigcap_{n=-\infty }^\infty f_{a,b}^{-n}({\rm int}
  (\Omega_{\omega_n})).\]
   Note that $\sigma_+\circ\pi_{a}=\pi_{a}\circ f_{a}$ and  $\sigma\circ\pi_{a,b}=\pi_{a,b}\circ f_{a,b}.$
The coding maps are not injective. The next connection between the heterochaos baker maps and the Dyck system lies at the basis of the proof of Theorem~C.

   \begin{thm}[\cite{TY}, Theorem~1.1]\label{TY-thm}
For all $a,b\in(0,\frac{1}{m})$
we have
\[\Sigma_{D}^+= \overline{\pi_{a}(\Lambda_a)}\ \text{ and }\ \Sigma_{D}=
   \overline{\pi_{a,b}(\Lambda_{a,b})}.\]
\end{thm}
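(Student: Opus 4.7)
The plan is to prove both equalities by showing two inclusions apiece. Since $\Sigma_D^+$ and $\Sigma_D$ are closed, it suffices to verify that (i) the codes of orbits are always Dyck-admissible, so $\pi_a(\Lambda_a)\subseteq\Sigma_D^+$ and $\pi_{a,b}(\Lambda_{a,b})\subseteq\Sigma_D$, and (ii) every finite admissible word is realized as a cylinder prefix of some orbit, which gives density. I would treat the one-sided and two-sided cases in parallel, using the $y$-coordinate (respectively the pair $(y,z)$) as a geometric proxy for the bracket stack.

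For (i), the central observation is that the $y$-coordinate behaves like a stack with alphabet $\{0,1,\ldots,m-1\}$: writing $y$ in base $m$ as $0.d_1 d_2\cdots$, the action of $f_a$ on $\Omega_{\alpha_i}^+$ prepends the digit $i-1$ to this expansion, while membership in $\Omega_{\beta_j}^+$ forces $d_1=j-1$ and then $f_a$ removes this leading digit. Given $(x,y)\in\Lambda_a$ with code $\omega=\pi_a(x,y)$ and any finite substring $\omega_i\cdots\omega_j$, I would reduce it step by step using the monoid relations \eqref{d-rel}: each time an inner factor $\alpha_l\beta_{l'}$ becomes adjacent after cancellation of nested balanced pairs, the $y$-coordinate just before this $\beta_{l'}$ must carry leading digit $l-1$ (deposited by the enclosing $\alpha_l$ and untouched by the nested balanced cancellations) as well as $l'-1$ (from membership in $\Omega_{\beta_{l'}}^+$), forcing $l=l'$. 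Hence no $0$ is ever produced, so ${\rm red}(\omega_i\cdots\omega_j)\neq 0$. For $f_{a,b}$, the $z$-coordinate admits a completely analogous stack interpretation governing the backward orbit, so substrings extending into negative indices are handled in the same way.

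For (ii), given an admissible finite word $w=\gamma_0\cdots\gamma_{n-1}$, I would show the cylinder $C_w=\bigcap_{k=0}^{n-1}f_a^{-k}({\rm int}(\Omega_{\gamma_k}^+))$ has non-empty interior, which combined with $|\Lambda_a|=1$ (full Lebesgue measure) gives $\Lambda_a\cap C_w\neq\emptyset$. The $x$-constraints impose no obstruction because every branch of $\tau_a$ maps onto $[0,1)$, so they can be solved by backward selection of a suitable $x_0$. For the $y$-constraints, admissibility forces ${\rm red}(w)$ to take the canonical form $\beta_{l_1}\cdots\beta_{l_p}\alpha_{j_1}\cdots\alpha_{j_q}$; choosing the initial $y_0$ so that its base-$m$ expansion begins with the digits $l_1-1, l_2-1,\ldots, l_p-1$ ensures, by an induction on $k$, that every $\beta$ encountered in $w$ finds the required leading digit on the $y$-stack. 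The two-sided case proceeds analogously, selecting $z_0$ to absorb the unmatched $\alpha$-suffix through backward iteration.

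The main obstacle I expect is making the stack invariant in Step (i) rigorous, because the monoid reduction admits several valid evaluation orders and one must ensure the geometric $y$-state corresponds to any legitimate partial reduction. I would address this by defining, inductively in $k$, a well-defined ``current stack'' as a word in $\{1,\ldots,m\}^*$ read off from a controlled prefix of the $y_k$-expansion, and verifying it coincides with the left-greedy reduction of $\omega_i\cdots\omega_{k-1}$; once this correspondence is established the remaining combinatorics is straightforward.
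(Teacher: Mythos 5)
This theorem is not proved in the paper at all: it is quoted verbatim as \cite{TY}, Theorem~1.1, so there is no in-paper argument to compare yours against, and your sketch has to stand on its own. On its own terms the strategy is right: the two-inclusion structure, and the central mechanism of reading the $y$-coordinate as a base-$m$ stack on which $\alpha_i$ pushes the digit $i-1$ while membership in $\Omega_{\beta_j}^+$ forces and then pops the digit $j-1$, is exactly the correct bridge between the geometry and the monoid reduction \eqref{d-rel}. The inductive ``current stack equals the left-greedy reduction of the itinerary so far'' invariant you flag at the end is indeed the crux of step (i) and does go through; you should add the small remark that the non-uniqueness of base-$m$ expansions at $m$-adic rationals causes no harm, because points of $\Lambda_a$ lie in ${\rm int}(\Omega_{\beta_j}^+)$ whenever a pop occurs, so the leading digit is unambiguous exactly when it is used.

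Two points need repair, both in the two-sided case. First, your phrase about ``selecting $z_0$ to absorb the unmatched $\alpha$-suffix through backward iteration'' is a red herring: $\Omega_\gamma=\Omega_\gamma^+\times[0,1]$, so membership imposes no condition on $z$ and any $z_0$ realizes a prescribed forward word; likewise, forward admissibility of a two-sided itinerary needs no $z$-stack, since the window $\omega_i\cdots\omega_j$ of $\pi_{a,b}(p)$ is the initial window of $\pi_{a,b}(f_{a,b}^i(p))$ and the one-sided $y$-argument applies to every iterate. Second, and more seriously, your density argument rests on $\Lambda$ having full Lebesgue measure, which you justify (implicitly) by measure invariance; but the theorem is asserted for all $a,b\in(0,\frac{1}{m})$, and $f_{a,b}$ preserves Lebesgue only when $a+b=\frac{1}{m}$. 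The conclusion $|\Lambda_{a,b}|=1$ is still true, but for a different reason: $f_{a,b}$ is piecewise affine with non-singular branches whose images tile $[0,1]^3$ up to a null set, so it is a.e.\ bijective and both forward and backward images of the null boundary set $[0,1]^3\setminus\bigcup_\gamma{\rm int}(\Omega_\gamma)$ remain null. This has to be said explicitly, since otherwise the intersection of the open cylinder with $\Lambda_{a,b}$ could a priori be empty. With these two repairs your outline is a correct proof.
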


\section{Mixing of all orders for the heterochaos baker maps}
In this section we prove Theorem~A.
In Section~\ref{3d} we show a certain invariance of correlations,
which implies the sufficiency to consider only $a\in(0,\frac{1}{2m}]$.
In Section~\ref{K} we establish the exactness and $K$-property of the corresponding maps, and in Section~\ref{proofthma} complete the proof of Theorem~A.

\subsection{Invariance of correlations}\label{3d}

Let $(X,\mathscr{B},\nu)$ be a probability space and let $T\colon X\to X$ be a measurable map preserving $\nu$.
For an integer $k\geq2$ and functions 
$\phi_0, \phi_1,\ldots,\phi_{k-1}\in L^k(\nu)$, consider their
correlations
 \[{\rm Cor}_{n_1,\ldots,n_{k-1}}(T;\phi_0,\ldots,\phi_{k-1};\nu)= \left|\int\prod_{j=0}^{k-1}(\phi_j\circ T^{n_j}) {\rm d}\nu-\prod_{j=0}^{k-1}\int\phi_j {\rm d}\nu\right|,\]
 where $0=n_0\leq n_1\leq\cdots \leq n_{k-1}$. 
  It is well-known that
 $(T,\nu)$ is $k$-mixing if and only if ${\rm Cor}_{n_1,\ldots,n_{k-1}}(T;\phi_0,\ldots,\phi_{k-1};\nu)\to0$ as $n_1,n_2-n_1,\ldots,n_{k-1}-n_{k-2}\to\infty$ for all $\phi_0, \phi_1,\ldots,\phi_{k-1}\in L^k(\nu)$.

 Define an involution $\iota\colon[0,1]^3\to[0,1]^3$ by
\[\iota(x,y,z)=(1-z,1-y,1-x).\] 

\begin{prop}\label{dual}
Let $a\in(0,\frac{1}{m})$.
For all $k\geq2$, $\phi_0,\ldots,\phi_{k-1}\in L^k({\rm Leb})$ and integers $n_0,\ldots,n_{k-1}$ with
$0=n_0\leq n_1\leq n_2\leq\cdots \leq n_{k-1}$
we have
\[\begin{split}{\rm Cor}_{n_1,\ldots,n_{k-1}}&(g_{a};\phi_0,\ldots,\phi_{k-1};{\rm Leb})\\
&={\rm Cor}_{n_{k-1},n_{k-1}-n_{1},n_{k-1}-n_2,\ldots,0}(g_{\frac{1}{m}-a};\phi_0\circ\iota^{-1},\ldots,\phi_{k-1}\circ\iota^{-1};{\rm Leb}).\end{split}\]
\end{prop}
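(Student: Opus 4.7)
My plan is to exploit a time-reversal symmetry within the family $\{g_a\}$: the involution $\iota$ conjugates $g_a$ to the inverse $g_{\frac{1}{m}-a}^{-1}$, up to null sets. The first step is to verify, Lebesgue almost everywhere, the identity
\[\iota\circ g_a = g_{\frac{1}{m}-a}^{-1}\circ\iota,\]
equivalently $g_{\frac{1}{m}-a}\circ\iota\circ g_a = \iota$ a.e. This is a direct check from the formulas \eqref{fa-2d} and \eqref{3d-map}: the involution interchanges the expanding $x$-coordinate with the contracting $z$-coordinate and flips $y$, and the arithmetic identity $1 - m(\frac{1}{m}-a) = ma$ matches the $z$-contraction factor of $g_a$ on $\Omega_\alpha$ with the $x$-expansion factor of $g_{\frac{1}{m}-a}$ on $\Omega_\beta$. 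Combinatorially, $\iota$ sends a piece $\Omega_{\alpha_i}$ for parameter $a$ to a piece $\Omega_{\beta_{m+1-i}}$ for parameter $\frac{1}{m}-a$, and vice versa. Since $g_{\frac{1}{m}-a}$ is one-to-one off a null set, its inverse and the conjugacy are well-defined a.e.

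Two elementary consequences follow. First, ${\rm Leb}$ on $[0,1]^3$ is $\iota$-invariant, because $\iota$ is an affine map of Jacobian $\pm 1$. Second, iterating the conjugacy gives $g_a^n\circ\iota = \iota\circ g_{\frac{1}{m}-a}^{-n}$ a.e.\ for every $n\ge 0$. The change of variable $p = \iota(u)$ in the joint integral on the left-hand side of the proposition then yields
\[\int\prod_{j=0}^{k-1}(\phi_j\circ g_a^{n_j})\,{\rm d}{\rm Leb} = \int\prod_{j=0}^{k-1}(\phi_j\circ\iota)\bigl(g_{\frac{1}{m}-a}^{-n_j}(u)\bigr)\,{\rm d}{\rm Leb}(u).\]
To convert the negative iterates to non-negative ones, I apply the ${\rm Leb}$-invariance of $g_{\frac{1}{m}-a}$ to replace $u$ by $g_{\frac{1}{m}-a}^{n_{k-1}}(v)$; using that $g_{\frac{1}{m}-a}$ is a.e.\ one-to-one, preimages compose correctly and $g_{\frac{1}{m}-a}^{-n_j}\circ g_{\frac{1}{m}-a}^{n_{k-1}} = g_{\frac{1}{m}-a}^{n_{k-1}-n_j}$ a.e. The resulting integral is precisely the joint integral featured in the right-hand correlation, with time index $n_{k-1}-n_j$ assigned to $\phi_j\circ\iota^{-1}$. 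The product-of-integrals terms on the two sides agree by the $\iota$-invariance of ${\rm Leb}$, and the proposition follows.

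The main obstacle is the first step: the case-by-case verification of the conjugacy. On each of the $2m$ pieces $\Omega_{\alpha_i}$ and $\Omega_{\beta_i}$, one must check that the $i$-dependent translations in the $y$-component and the affine formulas in the $z$-component transform consistently under the joint substitution $a\mapsto\frac{1}{m}-a$ together with the index reversal $i\mapsto m+1-i$. Once this combinatorial bookkeeping is in place, the remainder of the proof is a routine change-of-variables manipulation.
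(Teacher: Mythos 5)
Your proof is correct and follows essentially the same route as the paper: the identity $g_{\frac{1}{m}-a}\circ\iota\circ g_a=\iota$ a.e.\ is exactly the paper's assertion that $g_a^*:=\iota^{-1}\circ g_{\frac{1}{m}-a}\circ\iota$ inverts $g_a$ almost everywhere, and your two changes of variables ($p=\iota(u)$, then $u=g_{\frac{1}{m}-a}^{n_{k-1}}(v)$) reproduce the paper's insertion of $(g_a^*)^{n_{k-1}-n_j}\circ g_a^{n_{k-1}-n_j}$ followed by the invariance of ${\rm Leb}$ under $g_a$ and $\iota$. One cosmetic caveat: $\iota$ itself does not map partition pieces to partition pieces — rather $\iota$ carries the \emph{image} $g_a(\Omega_{\alpha_i})$ onto the domain piece $\Omega_{\beta_{m+1-i}}$ for the parameter $\frac{1}{m}-a$ — but this does not affect the argument.
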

\begin{proof}
Define $g_{a}^*\colon[0,1]^3\to[0,1]^3$ by
\begin{equation}\label{f*}g_{a}^*=\iota^{-1}\circ g_{\frac{1}{m}-a}\circ \iota.\end{equation}
 For Lebesgue a.e. $p\in[0,1]^3$ we have
  $g_a^*(g_a(p))=p$. Moreover,
  for Lebesgue a.e. $p\in[0,1]^3$ we have
\begin{equation}\label{3d-eq19}
\begin{split}\phi_j\circ g_{a}^{n_j}(p)&=\phi_j\circ (g_{a}^*)^{n_{k-1}-n_j}\circ g_a^{n_{k-1}-n_j}\circ g_a^{n_j}(p)\\
&=\phi_j\circ (g_a^*)^{n_{k-1}-n_j}\circ g_a^{n_{k-1}}(p)\ \text{ for every }j\in\{0,\ldots,k-1\}.\end{split}\end{equation} Using \eqref{3d-eq19} and the $g_a$-invariance of ${\rm Leb}$, \eqref{f*} and the $\iota$-invariance of ${\rm Leb}$ yield
\[\begin{split}\int\prod_{j=0}^{k-1}(\phi_j\circ g_a^{n_j}) {\rm d}{\rm Leb}&=\int\prod_{j=0}^{k-1}(\phi_j\circ (g_a^*)^{n_{k-1}-n_j}) {\rm d}{\rm Leb}\\&=\int\prod_{j=0}^{k-1}(\phi_j\circ \iota^{-1}\circ g_{\frac{1}{m}-a}^{n_{k-1}-n_j}\circ\iota) {\rm d}{\rm Leb}\\&=\int\prod_{j=0}^{k-1}(\phi_j\circ \iota^{-1}\circ g_{\frac{1}{m}-a}^{n_{k-1}-n_j}) {\rm d}{\rm Leb}.\end{split}\]
We also have 
$\int\phi_j{\rm d}{\rm Leb}=\int\phi_j\circ\iota^{-1}{\rm d}{\rm Leb}$, and 
so the desired equality holds.
\end{proof}

\subsection{Exactness and $K$-property}\label{K}
Let $(X,\mathscr{B},\nu)$ be a probability space.
For sub-sigma-algebras $\mathscr{C}$, $\mathscr{D}$ of $\mathscr{B}$
we write $\mathscr{C}\stackrel{\circ}{\subset}\mathscr{D}$ if
for every $C\in\mathscr{C}$ there exists $D\in\mathscr{D}$ such that $\nu(C\ominus D)=0$, where $\ominus$ denotes the symmetric difference of sets. We write $\mathscr{C}\stackrel{\circ}{=}\mathscr D$
if $\mathscr{C}\stackrel{\circ}{\subset}\mathscr{D}$ and $\mathscr{D}\stackrel{\circ}{\subset}\mathscr{C}$.
If $\{\mathscr{B}_n\}_{n=1}^\infty$ is a family of sub-sigma-algebras of $\mathscr{B}$,
let $\bigvee_{n=1}^\infty\mathscr{B}_n$ denote the smallest sub-sigma-algebra that contains all the $\mathscr{B}_n$.
If $\{\mathscr{B}_n\}_{n=1}^\infty$ is a family of partitions of $X$ into measurable sets,
let $\bigvee_{n=1}^\infty\mathscr{B}_n$ denote the smallest sub-sigma-algebra that contains all the $\mathscr{B}_n$.

Let $T\colon X\to X$ be a measurable map preserving $\nu$. For a sub-sigma-algebra $\mathscr{C}$ of $\mathscr{B}$ and $n\geq1$,
let $T^{-n}\mathscr{C}=\{T^{-n}(B)\colon B\in\mathscr{C}\}$. If $T$ has a measurable inverse, 
let $T^{n}\mathscr{C}=\{T^{n}(B)\colon B\in\mathscr{C}\}$.
 We say $(T,\nu)$ is {\it exact} if $T$ has no measurable inverse and $\bigcap_{n=0}^\infty T^{-n}\mathscr B\stackrel{\circ}{=}\{X,\emptyset\}$. 
We say $(T,\nu)$ has {\it $K$-property} if $T$ has a measurable inverse and there exists a sub-sigma-algebra $\mathscr{K}$ of $\mathscr{B}$ such that:
\begin{itemize}
\item[(i)] 
 $\mathscr{K}\subset T\mathscr{K}$.
\item[(ii)] 
 $\bigvee_{n=0}^{\infty}T^n\mathscr{K}\stackrel{\circ}{=}\mathscr B$.
\item[(iii)] 
 $\bigcap_{n=0}^\infty T^{-n}\mathscr{K}\stackrel{\circ}{=}\mathscr \{X,\emptyset\}$.
\end{itemize}
If $(T,\nu)$ has $K$-property, $T$ is usually called a Kolmogorov automorphism \cite{Wal82}.
Exactness or $K$-property implies mixing of all orders \cite[Section~2.6]{R63}.

\begin{prop}\label{exact}
For any $a\in(0,\frac{1}{2m}]$,
     $(f_a,{\rm Leb})$ is exact
and
     $(g_a|_{\Lambda_a,\frac{1}{m}-a},{\rm Leb}|_{\Lambda_{a,\frac{1}{m}-a}})$ has
 $K$-property. 
\end{prop}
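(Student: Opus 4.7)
The plan is to reduce both claims to properties of the induced Markov map $f^R$ on $\Delta_0^+$ constructed in Section~\ref{ind-exp-sec}, whose partition $\{\Delta^+_{0,i}\}_{i\ge 1}$ consists of rectangles each mapped affinely and bijectively onto $\Delta_0^+$. The base ingredient is exactness of $(f^R,{\rm Leb}|_{\Delta_0^+})$; it will be transferred first to $f_a$ by a Young-type tower argument, and then to $g_a$ by viewing the latter as a natural extension.

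First I would show that $(f^R,{\rm Leb}|_{\Delta_0^+})$ is isomorphic to a Bernoulli shift on countably many symbols, hence exact. Since every branch of $(f^R)^n$ is a full affine rectangle onto $\Delta_0^+$, it suffices to verify that the iterated partition $\bigvee_{k=0}^{n-1}(f^R)^{-k}\{\Delta^+_{0,i}\}$ generates mod null, which reduces to showing that a.e.\ cell shrinks to a point: the $y$-side contracts by a factor of at most $1/m$ per $f^R$-iterate, while the $x$-side contracts at an exponential rate controlled by $\chi(a)$, positive on average thanks to Lemma~\ref{area} and Propositions~\ref{R-fin}, \ref{tail-eq0}. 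Next I would pass to $f_a$ via the Young tower $\widehat\Delta=\bigcup_{i\ge 1}\Delta^+_{0,i}\times\{0,\ldots,R_i-1\}$ with its natural lift $\widehat f$ and projection $\pi(x,\ell)=f_a^\ell(x)$, a measure-preserving semiconjugacy after normalization. The crucial aperiodicity $\gcd\{R_i\colon i\ge 1\}=1$ is built in: by Lemma~\ref{area}(i)(ii), $R_i$ is odd when $\Delta^+_{0,i}\subset\Omega^+_\alpha$ and even when $\Delta^+_{0,i}\subset\Omega^+_\beta$, so both parities occur. A standard argument (cf.~\cite{You98}) then deduces exactness of the tower from exactness of the base together with this aperiodicity, and exactness descends to the factor $f_a$.

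For the $K$-property of $g_a$ I would exploit that $g_a$ is essentially the natural extension of $f_a$: the $z$-direction is uniformly contracted, by factor $ma$ on $\Omega_\alpha$ and $\tfrac{1}{m}-a$ on $\Omega_\beta$, both in $(0,1)$. Take $\mathscr{K}=\pi_{xy}^{-1}(\mathscr{B}_{[0,1]^2})$, where $\pi_{xy}\colon (x,y,z)\mapsto (x,y)$. Condition $\mathscr{K}\subset g_a\mathscr{K}$ is immediate because the $(x,y)$-component of $g_a$ depends only on $(x,y)$, so $g_a^{-1}\mathscr{K}\subset\mathscr{K}$; and condition $\bigcap_{n\ge 0}g_a^{-n}\mathscr{K}\stackrel{\circ}{=}\{[0,1]^3,\emptyset\}$ reduces directly to the exactness of $f_a$ just established, since any $A$ in this intersection is of the form $\pi_{xy}^{-1}(C_0)$ with $C_0\in\bigcap_{n\ge 0}f_a^{-n}\mathscr{B}_{[0,1]^2}$. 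The main obstacle is the generation condition $\bigvee_{n\ge 0}g_a^n\mathscr{K}\stackrel{\circ}{=}\mathscr{B}_{[0,1]^3}$: for fixed $(x,y)$ and large $n$, the $z$-component of $g_a^n(x,y,z)$ is an affine function of $z$ with slope $(ma)^{n_\alpha}(\tfrac{1}{m}-a)^{n_\beta}$ tending to zero exponentially, so the $z$-coordinate of the original point is asymptotically determined by its past $(x,y)$-trajectory; I would argue that this asymptotic information recorded by $\{g_a^n\mathscr{K}\}_{n\ge 0}$ is rich enough to recover all of $\mathscr{B}_{[0,1]^3}$ on $\Lambda_{a,\frac{1}{m}-a}$ mod null.
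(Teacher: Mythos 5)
There is a genuine gap at the endpoint $a=\tfrac{1}{2m}$, which the statement includes. Your passage from exactness of the induced map $f^R$ to exactness of $f_a$ goes through the Young tower ``after normalization'', and the standard transfer of exactness from the base to the tower and then down to the factor $[0,1]^2$ requires the tower to carry a \emph{finite} invariant measure projecting to ${\rm Leb}$, i.e. $\int R\,{\rm d}{\rm Leb}<\infty$. For $a=\tfrac{1}{2m}$ this fails: the counting estimates give $|\{R=n\}|\asymp n^{-3/2}$, so $R$ is finite a.e.\ by Proposition~\ref{R-fin} but is not integrable, the lift of ${\rm Leb}$ to $\Delta^+$ is an infinite measure, and the tower results of \cite{You98,You99} are not applicable --- this is precisely the obstruction recorded in Remark~\ref{last-rem}. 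The paper's proof avoids the tower altogether: it establishes a density lemma (Lemma~\ref{density}) saying that any set of positive measure is $(1-\varepsilon)$-dense in some affine full pullback $\omega$ of $(0,1)^2$, and then, for $A\in\bigcap_n f^{-n}\mathscr{B}$ with $A=f^{-n}(A')$, pushes $A\cap\omega$ forward by the affine surjection $f^n|_\omega$ to get $|A|=|A'|\geq 1-\varepsilon$. That argument needs only $R<\infty$ a.e.\ (so that the induced partition generates mod null), not integrability of $R$. So your exactness argument covers $a\in(0,\tfrac{1}{2m})$ but not the endpoint; to close the gap you need a direct argument of the above type, or some other transfer of exactness from $f^R$ to $f$ that does not rely on a finite tower measure.

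The remaining parts of your proposal are essentially sound. Step~1 (the induced map is a full-branch affine Markov map with independent generating partition, hence Bernoulli and exact) is correct, as is the parity observation giving $\gcd\{R_i\}=1$. For the $K$-property, your $\mathscr{K}=\pi_{xy}^{-1}(\mathscr{B}_{[0,1]^2})$ agrees mod null with the paper's $\mathscr{K}$ (generated by $\{(A\times[0,1])\cap\Lambda\colon A\in\mathscr{A}\}$); conditions (i) and (iii) are verified correctly, and (ii) does follow from the uniform $z$-contraction by rates $ma$ and $\tfrac1m-a$, since the atoms of $g^n\mathscr{K}$ are vertical segments shrinking to points. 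But note that your verification of (iii) feeds on the exactness of $f_a$, so the $K$-property claim inherits the same gap at $a=\tfrac{1}{2m}$.
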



For a proof of this proposition
we need some notation and one preliminary lemma.
For $a\in(0,\frac{1}{m})$ define $\mathscr{A}=\mathscr{A}_a$ by
\[\mathscr{A}=\bigcup_{n=1}^\infty\{B_0\cap (f^R)^{-1}(B_1)\cap\cdots\cap (f^R)^{-n+1}(B_{n-1})\colon B_0,\ldots,B_{n-1}\in\mathscr{P}|_{\Delta_0^+}\}.\]
Let $\mathscr{B}(X)$ denote the Borel sigma-algebra on a topological space $X$.
\begin{lemma}\label{density}
If $a\in(0,\frac{1}{2m}]$, then for any $A\in\mathscr{B}([0,1]^2)$ with positive Lebesgue measure  
and any $\varepsilon\in(0,1)$,
there exists $\omega\in\mathscr A$ 
such that $|A\cap \omega|\geq(1-\varepsilon)|\omega|$.
\end{lemma}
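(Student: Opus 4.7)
The plan is to generate the Borel $\sigma$-algebra on $\Delta_0^+$ by the natural filtration coming from the induced map $f^R$ and then apply the martingale convergence theorem. By Proposition~\ref{R-fin} we have $|\Delta_0^+|=1$ for $a\in(0,\frac{1}{2m}]$, so replacing $A$ with $A\cap\Delta_0^+$ I may assume $A\subset\Delta_0^+$. For each $n\geq 1$ let $\mathscr A_n$ denote the $\sigma$-algebra on $\Delta_0^+$ generated by $\bigvee_{k=0}^{n-1}(f^R)^{-k}\{\Delta_{0,i}^+\}_{i\geq 1}$, and for $p\in\Delta_0^+$ let $\omega_n(p)$ be the atom of $\mathscr A_n$ containing $p$. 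Modulo null sets each $\omega_n(p)$ is a rectangle obtained by iterating the affine inverse branches of $f^R$, and it lies in the collection $\mathscr A$.

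The key step is to show uniform shrinkage of the $x$- and $y$-sides of $\omega_n(p)$ as $n\to\infty$, which yields $\bigvee_{n\geq 1}\mathscr A_n\stackrel{\circ}{=}\mathscr B([0,1]^2)|_{\Delta_0^+}$. Since every element $\omega\in\mathscr P$ is contained in some $\Omega_\gamma^+$, $\gamma\in D$, and under the hypothesis $a\leq\tfrac{1}{2m}$ one has $a\leq 1-ma<1$, the $x$-side satisfies $|\omega|_x\leq 1-ma$. By Proposition~\ref{exist-cor} one also has $|\omega|_y\leq \tfrac{1}{m}$. Because the branches of $f^R$ are affine bijections of the $\omega$'s onto $(0,1)^2$, multiplying these bounds along an itinerary of length $n$ shows that every atom of $\mathscr A_n$ is a rectangle with $x$-side at most $(1-ma)^n$ and $y$-side at most $m^{-n}$, both tending to $0$. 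Consequently, for any open $U\subset [0,1]^2$ and any $p\in U\cap\Delta_0^+$, one has $\omega_n(p)\subset U$ for all sufficiently large $n$, so
\[U\cap\Delta_0^+=\bigcup_{n\geq 1}\bigcup\bigl\{\omega\in\mathscr A_n : \omega\subset U\bigr\}\in\bigvee_{n\geq 1}\mathscr A_n,\]
which gives the generating property.

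Doob's martingale convergence theorem applied to $\mathbf 1_A\in L^1({\rm Leb})$ and the increasing filtration $\{\mathscr A_n\}$ then yields, for Lebesgue a.e.\ $p$,
\[\mathbb E[\mathbf 1_A\mid\mathscr A_n](p)=\frac{|A\cap\omega_n(p)|}{|\omega_n(p)|}\longrightarrow\mathbf 1_A(p).\]
Since $|A|>0$, this limit equals $1$ at some $p\in A$; for such $p$, any sufficiently large $n$ satisfies $|A\cap\omega_n(p)|\geq(1-\varepsilon)|\omega_n(p)|$, and $\omega=\omega_n(p)\in\mathscr A$ is as required.

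The only non-routine ingredient is the uniform shrinkage argument in the second paragraph: one needs the hypothesis $a\leq\tfrac{1}{2m}$ both to obtain the uniform $x$-side bound $1-ma<1$ and, via Proposition~\ref{R-fin}, to know that $\Delta_0^+$ has full Lebesgue measure so that the filtration sees almost every point. Once those two facts are in place, the remaining steps are a standard application of the martingale convergence theorem.
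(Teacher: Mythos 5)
Your proof is correct, but it takes a different route from the paper's. The paper argues via approximation in measure: it observes that countable unions of elements of $\mathscr A$ form an algebra generating $\mathscr B(\Delta_0^+)$ mod null sets, approximates $A$ by a finite union $\bigcup_{j=1}^k\omega_j$ with $|\bigcup_j\omega_j\ominus A|<\varepsilon^2$, and then extracts a single $\omega_{j_0}$ of high density by a pigeonhole/averaging contradiction. You instead set up the increasing filtration of cylinder partitions for $f^R$ and invoke Doob's (upward) martingale convergence theorem to find a density point of $A$, which is the Lebesgue-density-theorem version of the same idea. Both arguments ultimately rest on the same nontrivial fact --- that the cylinders of the induced map generate the Borel $\sigma$-algebra on $\Delta_0^+$ modulo null sets --- which the paper leaves implicit but you verify explicitly via the uniform shrinkage $|\omega|_x\leq(1-ma)^n$, $|\omega|_y\leq m^{-n}$ of the atoms; making that step explicit is a genuine improvement in completeness. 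Your approach buys a slightly stronger conclusion (almost every point of $A$ is eventually $(1-\varepsilon)$-dense in its cylinders), at the cost of invoking martingale convergence where the paper gets by with elementary measure approximation. One cosmetic remark: the hypothesis $a\leq\frac{1}{2m}$ is not actually needed for the $x$-side bound (one has $\max\{a,1-ma\}<1$ for every $a\in(0,\frac1m)$); its real role, which you also identify, is to guarantee $|\Delta_0^+|=1$ via Proposition~\ref{R-fin}.
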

\begin{proof}

Since the collection of unions of countably many elements of $\mathscr A$ is an algebra
on $\Delta^+_0$, 
for any $\varepsilon>0$
there exist 
 finitely many elements $\omega_{1},\ldots,\omega_{k}$ of $\mathscr A$ such that 
$|\bigcup_{j=1}^k \omega_{j}\ominus A|<\varepsilon^2$.
If $\varepsilon$ is sufficiently small, 
there exists $j_0\in\{1,\ldots,k\}$ such that
 $|A\cap \omega_{j_0}|\geq(1-\varepsilon)|\omega_{j_0}|$, for otherwise we obtain 
 the following contradiction:
$|A|<(1-\varepsilon)\sum_{j=1}^k|\omega_j|+|A\setminus\bigcup_{j=1}^k \omega_j|<(1-\varepsilon)(|A|+\varepsilon^2)+\varepsilon^2<|A|$.
\end{proof}


\begin{proof}[Proof of Proposition~\ref{exact}]
Let $A\in \bigcap_{n=0}^\infty f^{-n}\mathscr B([0,1]^2)$
satisfy $|A|>0$. To verify the exactness of $(f,{\rm Leb})$ it suffices to show that $|A|=1$.
 By Lemma~\ref{density}, for any $\varepsilon>0$ there exist $n\geq1$ and an affine pullback of $\omega$ of $(0,1)^2$ by $f^n$
 such that $|A\cap \omega|>(1-\varepsilon)|\omega|$.
 There exists  $A'\in\mathscr B([0,1]^2)$ such that
 $A=f^{-n}(A')$, and therefore $A'\supset f^n(A)$ and
 \[|A|=|A'|\geq|f^n(A)|\geq|f^n(A\cap \omega)|\geq(1-\varepsilon)|f^n(\omega)|=1-\varepsilon.\]
 The last inequality is because $f^n|_\omega$ is affine.
 Since $\varepsilon$ is arbitrary we obtain $|A|=1$.

    Note that ${\rm Leb}(\Lambda)=1$, 
  $g|_\Lambda$ has a measurable inverse, and the restriction ${\rm Leb}|_{\Lambda}$ is $g|_{\Lambda}$-invariant.
Let $\mathscr{K}$ denote the smallest sub-sigma-algebra of $\mathscr{B}(\Lambda)$ that contains $\{(A\times[0,1])\cap \Lambda\colon A\in\mathscr{A}\}$. 
From the definition of the map \eqref{3d-map} we obtain
$\mathscr{K}\subset g\mathscr{K}$ and $\bigvee_{n=0}^{\infty}g^n\mathscr{K}\stackrel{\circ}{=}\mathscr B(\Lambda)$.
The argument in the previous paragraph shows
$\bigcap_{n=0}^\infty g^{-n}\mathscr{K}\stackrel{\circ}{=}\mathscr \{\Lambda,\emptyset\}$. We have verified that
 $(g|_{\Lambda },{\rm Leb}|_{\Lambda })$ has
 $K$-property. 
\end{proof}
\begin{remark} The exactness of $(f_a,{\rm Leb})$ for $a\in(0,\frac{1}{2m})$ can be shown 
by applying \cite[Theorem~1~(iii)]{You99} (see also \cite[Lemma~5]{You98})
to the tower map $F$
introduced in Section~\ref{tower-sec}.
This argument does not work 
 for $a=\frac{1}{2m}$ 
 since the lift of the Lebesgue measure to the tower 
 becomes an infinite measure. 
 For more details on this point, see Remark~\ref{last-rem}.
\end{remark}

\subsection{Proof of Theorem~A}\label{proofthma}
Let $a\in(0,\frac{1}{2m}]$.
By Proposition~\ref{exact}, $(f_a,{\rm Leb})$ is exact and so mixing of all orders. By Proposition~\ref{exact},
  $(g_{a},{\rm Leb})$ is mixing of all orders. By Proposition~\ref{dual}, $(g_{\frac{1}{m}-a},{\rm Leb})$ is mixing of all orders too, and so is $(f_{\frac{1}{m}-a},{\rm Leb})$. The proof of Theorem~A is complete. \qed

 \section{Exponential mixing for the heterochaos baker maps}
In this section we prove Theorem~B.
In Section~\ref{tower-sec} 
we introduce towers with exponential tail
associated with the heterochaos baker maps. In Section~\ref{proofthmb}
we apply the results in \cite{You98} to the towers and use large deviations
to complete the proof of Theorem~B.

\subsection{Towers}\label{tower-sec}
Put
$\Delta_0=\Delta^+_0\times[0,1]$.
We extend the stopping time 
 to a function on $\Delta_0$ in the obvious way, and still denote the extension by $R$: $R(x,y,z)=R(x,y)$ for $(x,y,z)\in\Delta_0$.
We define a {\it tower} $\Delta$ associated with $g=g_a$ by
\[\Delta=\{(p,\ell)\colon p\in\Delta_0,\ \ell=0,1,\ldots, R(p)-1\}.\]
For each $\ell\geq1$, the $\ell$-th floor is the set
\[\Delta_\ell=\{(p,\ell)\in\Delta\colon p\in\Delta_0\}.\]
We identify the ground floor $\Delta_0\times\{0\}$
with $\Delta_0$. Note that
$\Delta=\bigcup_{\ell=0}^\infty\Delta_\ell$.
  Define a {\it tower map} $G\colon\Delta\to\Delta$ by
\[ G(p,\ell)=\begin{cases}(p,\ell+1)&\ \text{ if }\ell+1<R(p),\\
(g^{R(p)}(p),0)&\ \text{ if }\ell+1=R(p).
\end{cases}\]
Collapsing the $z$-coordinate, we obtain 
the quotient tower $\Delta^+$ and the tower map
$F\colon \Delta^+\to\Delta^+$.
The maps $g$ and $G$, $f$ and $F$ are semiconjugated by the maps $\theta\colon(p,\ell)\in \Delta\mapsto g^\ell(p)\in[0,1]^3$ and 
$\theta^+\colon(p,\ell)\in \Delta^+\mapsto f^\ell(p)\in[0,1]^2$ respectively.
Let
${\rm pr}\colon\Delta\to\Delta^+$ denote the canonical projection. 
In summary, the following diagram commutes:
  \[
   \xymatrix{
    \Delta   \ar^{\begin{split}{\rm pr}\end{split}}[rd] \ar_{\begin{split}&\\
    &\theta\end{split}}[dd] \ar^{\begin{split}G\end{split}}[rr] & &
    \Delta \ar^{\begin{split}{&\\
    &\theta}\end{split}}[dd]|{\hole} \ar^{\begin{split}{\rm pr}\end{split}}[rd] &  \\
    & \Delta^+ \ar_{\begin{split}&\\
    &\theta^+\end{split}}[dd] \ar^{\begin{split}\!\!\!\!\!\!\!\!\!F\end{split}}[rr] & & \Delta^+ \ar^{\begin{split}&\\ &\theta^+\end{split}}[dd] \\
    [0,1]^3 \ar[rr]^{\begin{split}\ \ \ \ \ \  g\end{split}}|{\hole} \ar[rd] & &
    [0,1]^3 \ar[rd] & \\
    & [0,1]^2 \ar^{\begin{split}f\end{split}}[rr] & & 
   [0,1]^2
.}\]

We fix a sigma-algebra on $\Delta$
that is obtained by naturally transplanting the Borel sigma-algebra on $[0,1]^3$. 
Each floor $\Delta_\ell$, $\ell\geq0$ is identified with $\{R>\ell\}$, and so equipped with the restriction of the Lebesgue measure.
If $a\in(0,\frac{1}{2m})$, then in view of
  Proposition~\ref{tail-eq0} let
$\mu$ denote the probability measure on $\Delta$ given by
\[\mu(A)=\frac{1}{\int R{\rm d}{\rm Leb}} \sum_{\ell=0}^\infty|A\cap\Delta_\ell|\ \text{ for any measurable set } A\subset\Delta.\]
Since the Lebesgue measure on $[0,1]^3$ is 
$g$-invariant, $\mu$ is $G$-invariant.
The measure $\mu\circ\theta^{-1}$ is $f$-invariant, and absolutely continuous with respect to the Lebesgue measure, and hence
 $\mu\circ\theta^{-1}={\rm Leb}$.
 The measure $\mu^+=\mu\circ{\rm pr}^{-1}$
is $F$-invariant. 

\subsection{Proof of Theorem~B}\label{proofthmb}
Let $a\in(0,\frac{1}{2m})$. Let $\eta\in(0,1]$ and let $\varphi$, $\psi\in\mathscr H_\eta([0,1]^3)$.
For a function $\phi$ on $[0,1]^3$,
let $\tilde\phi$ denote its lift
to the tower $\Delta$ associated with $g=g_a$, i.e. $\tilde\phi=\phi\circ\theta$.
Note that
${\rm Cor}_n(g;\varphi,\psi;{\rm Leb})={\rm Cor}_n(G;\tilde\varphi,\tilde\psi;\mu)$. 

The rest of the proof of Theorem~B breaks into three steps,
 much in parallel to \cite[Section~4.1]{You98} with one important difference in Step~2.
In Step~1 we begin by approximating $\tilde\varphi\circ G^k$, $\tilde\psi\circ G^k$, $k\geq1$
by functions $\varphi_k$, $\psi_k$ on $\Delta$ which do not depend on the $z$-coordinate.
In Step~2 we provide error  bounds of these approximations.
Since \cite[p.608~Sublemma]{You98} does not hold as a result of the breakdown of
the condition (P4) (backward~contraction) in \cite{You98}, we estimate errors of these approximations using large deviations for the map $\tau_a$ in \eqref{Fa-1d}.
In Step~3
we view $\varphi_k$, $\psi_k$ as functions on $\Delta^+$, and show exponential decay of their correlations by applying 
\cite{You98}. We then unify all these estimates.
\medskip

\noindent{\it Step 1: Approximations.}
Put $\Delta_{0,i}=\Delta^+_{0.i}\times[0,1]$ for $i\geq1$.
Each floor $\Delta_\ell$, $\ell\geq0$ is  partitioned into $\{\Delta_{\ell,i}\}_{i\geq1\colon R_i>\ell}$ where $\Delta_{\ell,i}$ is a copy of $\Delta_{0,i}$.
Let $\mathscr D_0$ denote the partition of $\Delta$ into $\Delta_{\ell,i}$-components. It has the Markov property: For every $A\in\mathscr{D}_0$, $G(A)$ is the union of elements of $\mathscr{D}_0$. For $k\geq1$ we put
$\mathscr{D}_k=\bigvee_{j=0}^{k-1}G^{-j}\mathscr{D}_0$.
For $k\geq0$, let $\mathscr{D}_k^+$ denote the partition of $\Delta^+$ which is obtained as the canonical projection of $\mathscr{D}_k$.
For $\phi\in\mathscr H_\eta([0,1]^3)$ and $k\geq1$,
define $\phi_k\colon\Delta\to\mathbb R$
by 
\[\phi_k|_A=\inf\{\tilde\phi(w)\colon w\in G^k(A)\}\quad
\text{for every }   A\in\mathscr D_{2k}.\] 
Clearly $\phi_k$ is constant on each element of $\mathscr{D}_{2k}$, and  $|\phi_k|_\infty\leq|\phi|_\infty$.
Since $\phi_k$ does not depend on the $z$-coordinate,
we may view $\phi_k$ as a function on $\Delta^+$.
Let $n>k$.
By the identities
${\rm Cor}_n(G;\tilde\varphi,\tilde\psi;\mu)=
{\rm Cor}_{n-k}(G;\tilde\varphi,\tilde\psi\circ G^k;\mu)$ and
${\rm Cor}_n(G;\varphi_k,
\psi_k;\mu)={\rm Cor}_n(F;\varphi_k,
\psi_k;\mu^+)$,
we have
${\rm Cor}_n(G;\tilde\varphi,\tilde\psi;\mu)\leq I+I\!I+I\!I\!I$
where
\[\begin{split}I&=|{\rm Cor}_{n-k}(G;\tilde\varphi,\tilde\psi\circ G^k;\mu)-{\rm Cor}_{n-k}(G;\tilde\varphi,\psi_k;\mu)|,\\
I\!I&=|{\rm Cor}_{n-k}(G;\tilde\varphi,\psi_k;\mu)-{\rm Cor}_{n-k}(G;\varphi_k,\psi_k;
\mu)|,\\
I\!I\!I&={\rm Cor}_{n-k}(F;\varphi_k,
\psi_k;\mu^+).\end{split}\]

\noindent{\it Step 2: Estimates of $I$, $I\!I$.}
A direct calculation shows
$I\leq 2|\varphi|_\infty
\int|\tilde\psi\circ G^k-\psi_k|{\rm d}\mu,$ and a similar argument to the one 
 in \cite[p.608]{You98} shows
$I\!I\leq 2|\psi|_\infty
\int|\tilde\varphi\circ G^k-\varphi_k|{\rm d}\mu.$
Hence, upper bounds of $I$, $I\!I$ follow from the next lemma.
For $\phi\in\mathscr{H}_\eta([0,1]^3)$ let 
$|\phi|$ denote the $\eta$-H\"older norm of $\phi$, namely
\[|\phi|={\sup}_{\stackrel{p,q\in[0,1]^3 }{ p\neq q}}\frac{|\phi(p)-\phi(q)|}{|p-q|^\eta },\]
where $|p-q|$ denotes the Euclidean distance between $p$ and $q$. 
\begin{lemma}\label{LD-lem}
For any $\eta\in(0,1]$ and any $\phi\in \mathscr{H}_\eta([0,1]^3)$,
there exist constants $C=C(\phi)>0$ and $\xi=\xi(\eta)\in(0,1)$ such that 
for every $k\geq1$,
\[\begin{split}\int|\tilde\phi\circ G^k-\phi_k| {\rm d}\mu\leq C\xi^k.\end{split}\]
\end{lemma}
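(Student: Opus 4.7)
The plan is to bound the integrand pointwise on each cell of $\mathscr D_{2k}$ via H\"older continuity, then estimate the diameter of $\theta(G^k(A))$ coordinatewise and split the sum according to how the forward $g$-orbit distributes between $\Omega^+_\alpha$ and $\Omega^+_\beta$.

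For $A\in\mathscr D_{2k}$ and $p\in A$, H\"older continuity gives
\[|\tilde\phi\circ G^k(p)-\phi_k(p)|\le|\phi|\,\bigl(\operatorname{diam}\theta(G^k(A))\bigr)^\eta,\]
and $\theta\circ G=g\circ\theta$ yields $\theta(G^k(A))=g^k(\theta(A))$. Any two points $w,w'\in g^k(\theta(A))$ satisfy $g^j(w),g^j(w')\in\theta(A_{k+j})$ for $j=0,\ldots,k-1$. Since $g$ is a skew product over $\tau_a$ and $\tau_a$ is uniformly expanding with reciprocal slope at most $\rho_x:=\max\{a,1-ma\}<1$, these $k$ forward constraints confine $w_x$ to an interval of length $\le\rho_x^k$. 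The $z$-coordinate is uniformly contracted by $g$ with factor $\rho_z:=\max\{ma,\tfrac1m-a\}<1$, hence $\operatorname{diam}_z\theta(G^k(A))\le\rho_z^k$ directly.

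The $y$-direction is the delicate step. Each $\mathscr D_0$-element sits inside a single $\Omega^+_\gamma$, so the counts
\[n_\alpha(A)=\#\{0\le j<k:\theta(A_{k+j})\subset\Omega^+_\alpha\},\quad n_\beta(A)=\#\{0\le j<k:\theta(A_{k+j})\subset\Omega^+_\beta\}\]
are well-defined with $n_\alpha(A)+n_\beta(A)=k$. Iterating the $y$-factor of $g$ forward (which multiplies $y$-distances by $1/m$ on $\Omega^+_\alpha$ and by $m$ on $\Omega^+_\beta$) and using $\operatorname{diam}_y\theta(A_{2k-1})\le1$ gives $\operatorname{diam}_y\theta(G^k(A))\le m^{n_\alpha(A)-n_\beta(A)}$. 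Since $\int\phi^c\,d\mathrm{Leb}=(1-2ma)\log m>0$ for $a\in(0,\tfrac1{2m})$, typical cells have $n_\beta-n_\alpha\sim k(1-2ma)$. Fix $\delta\in(0,1-2ma)$ and decompose $\mathscr D_{2k}=\mathcal G\sqcup\mathcal B$ with $\mathcal G=\{A:n_\beta(A)-n_\alpha(A)\ge\delta k\}$. On $\mathcal G$ one has $\operatorname{diam}_y\le m^{-\delta k}$, so the contribution of $\mathcal G$ to the weighted sum is at most $m^{-\delta\eta k}$.

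For $\mathcal B$ I would invoke a large deviation estimate for $\tau_a$: the $\alpha/\beta$-label of $A_{k+j}$ is determined by whether $\tau_a^{k+j}$ applied to a representative $x$-coordinate lies in $[0,ma)$ or $[ma,1]$, and because $\tau_a$ is a fully branched piecewise affine Markov map preserving Lebesgue, the sequence $(\mathbf 1_{[ma,1]}\circ\tau_a^n)_n$ is i.i.d.\ Bernoulli$(1-ma)$. Cram\'er's theorem yields $\mathrm{Leb}\{x:n_\beta(x,k)-n_\alpha(x,k)<\delta k\}\le Ce^{-c(\delta)k}$, and by the definition of $\mu$ together with $\int R\,d\mathrm{Leb}<\infty$ from Proposition~\ref{tail-eq0}, this translates to $\mu\bigl(\bigcup_{A\in\mathcal B}A\bigr)\le C'e^{-c(\delta)k}$. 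Combining the $x$-, $y$- and $z$-bounds and choosing $\xi\in(0,1)$ as the maximum of $\rho_x^\eta$, $\rho_z^\eta$, $m^{-\delta\eta}$ and $e^{-c(\delta)}$ finishes the argument. The main obstacle I anticipate is making the reduction to i.i.d.\ Bernoulli trials uniform over representatives inside each $\mathscr D_0$-cell, since the correspondence between the tower itinerary and the $\tau_a$-itinerary is shifted by the column height $\ell_0$; this should be controllable by grouping cells of $\mathscr D_{2k}$ by their deviation level $n_\beta-n_\alpha$, so that the proliferation of cells at a given level is absorbed by the exponential gain in the Cram\'er bound.
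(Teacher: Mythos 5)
Your argument is correct and follows essentially the same route as the paper: coordinatewise diameter bounds (uniform exponential contraction in $x$ and $z$, and the $y$-diameter controlled by $m^{n_\alpha-n_\beta}=\exp(-S_k\phi^c)$), a good/bad splitting of $\mathscr D_{2k}$ according to whether the Birkhoff sum of $\phi^c$ over the second half of the itinerary deviates from its mean, and a Cram\'er-type large deviation bound for the i.i.d.\ digits of $\tau_a$ on the bad part. The obstacle you anticipate at the end dissolves without any grouping by deviation level, because $\theta\circ G=g\circ\theta$, $\mu\circ\theta^{-1}={\rm Leb}$ and the $G$-invariance of $\mu$ together convert the $\mu$-mass of the union of bad cells directly into the Lebesgue measure of a sublevel set of $S_k\phi^c$ on $[0,1]^2$, which is exactly how the paper proceeds.
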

\begin{proof}
Put
\[\chi^u=-\log\max\left\{a,1-ma\right\},\
\chi^s=\log\max\left\{b,1-mb\right\},\
\chi^c=(1-2ma)\log m.\]
The minimal expansion rate by $g$ in the $x$-direction is $\exp(\chi^u)$
and the maximal contraction rate by $g$ in the $z$-direction is $\exp(\chi^s)$. 
Since $a\in(0,\frac{1}{2m})$ we have $\chi^c>0$.
Fix $\varepsilon\in(0,\chi^c)$,
and recall the definition of a block and the notation introduced in the beginning of Section~\ref{a-p}.
For each $A\in \mathscr{D}_k$, 
$\theta(A)$ is a block
satisfying
$|\theta(A)|_x\leq\exp(-\chi^uk)$ and $|\theta(A)|_z\leq\exp(\chi^sk).$
Define
\[\mathscr D_{k}'=\left\{A\in \mathscr D_{k}\colon
|\theta(A)|_y<\exp(-(\chi^c-\varepsilon) k)\right\}.\]
Put
$\xi_0=\exp(\max\{-\chi^u,-\chi^c+\varepsilon,\chi^s\})\in(0,1).$
For every $A\in\mathscr D_{2k}$ with $G^k(A)\in\mathscr{D}_k'$,
the Euclidean diameter of $\theta (G^k(A))$ does not exceed $3\xi_0^k$.
Since $\phi\in\mathscr{H}_\eta([0,1]^3)$, this bound implies
\begin{equation}\label{eqho1}\sup_A|\tilde\phi\circ G^k-\phi_k|\leq 3^\eta|\phi|\xi_0^{\eta k}.\end{equation}

For each $w\in\Delta^+$
let $\mathscr{D}_k^+(w)$ denote the element of $\mathscr{D}_k^+$ that contains $w$. 
Then $\mathscr{D}_k^+(w)$ is a rectangle satisfying
 $|\mathscr{D}_k^+(w)|_y\leq\exp(-S_k\phi^c(\theta^+(w)))$, which implies
\begin{equation}\label{eqho4}\begin{split}\sum_{A\in\mathscr{D}_{k}\setminus\mathscr{D}_k'}\mu(A)&\leq\mu^+\left\{w\in \Delta^+\colon S_k\phi^c(\theta^+(w)) \leq (\chi^c-\varepsilon)k\right\}\\
&=\left|\left\{p\in [0,1]^2\colon S_k\phi^c(p)\leq(\chi^c-\varepsilon)k\right\}\right|.\end{split}\end{equation}
Recall that $g$ is a skew product over $\tau$ in \eqref{Fa-1d} that preserves the Lebesgue measure on $[0,1]$, and 
$\int_{[0,1]}\phi^c(x,0){\rm d}x=\chi^c$.
From the large deviations applied to the sequence of independently identically distributed random variables
$x\in[0,1]\mapsto \phi^c(f^k(x,0))$ $(k=0,1,\ldots)$,
the sum in \eqref{eqho4} decays exponentially in $k$.
There exist $C>0$ and $\xi_1\in(0,1)$ such that
\begin{equation}\label{eqho3}
\sum_{A\in\mathscr{D}_{k}\setminus\mathscr{D}_k'}\mu(G^{-k}(A))\leq\sum_{A\in\mathscr{D}_{k}\setminus\mathscr{D}_k'}\mu(A)\leq C\xi_1^k.\end{equation}
The first inequality is due to the $G$-invariance of $\mu$. Let $D_k$ denote the union of elements of 
$\mathscr{D}_k'$.
For any $A\in\mathscr{D}_{2k}$, either $G^k(A)\subset D_k$ or $G^k(A)\subset \Delta\setminus D_k$ holds. 
We have
\begin{equation}\label{eqho39}
\sum_{A\in\mathscr{D}_{2k}\colon G^k(A)\subset \Delta\setminus D_k }\mu(A)\leq\sum_{A\in\mathscr{D}_{k}\setminus\mathscr{D}_k'}\mu(G^{-k}(A)).\end{equation}

From \eqref{eqho1}, \eqref{eqho4}, \eqref{eqho3} and
\eqref{eqho39} we obtain
\[\begin{split}\int|\tilde\phi\circ G^k-\phi_k| {\rm d}\mu=&\sum_{A\in\mathscr{D}_{2k}\colon G^k(A)\subset  D_k }\int_{A}|\tilde\phi\circ G^k-\phi_k| {\rm d}\mu\\
&+\sum_{A\in\mathscr{D}_{2k}\colon G^k(A)\subset \Delta\setminus D_k}\int_{A}|\tilde\phi\circ G^k-\phi_k| {\rm d}\mu\\
\leq& 3^\eta|\phi|\xi_0^{\eta k}\sum_{A\in\mathscr{D}_{2k}}\mu(A)+2|\phi|_\infty\sum_{A\in\mathscr{D}_{k}\setminus\mathscr{D}_k'}
\mu(A)\\
\leq& 3^{\eta}|\phi|\xi_0^{\eta k}+2C|\phi|_\infty\xi_1^k.\end{split}\]
Taking $C(\phi)=3^\eta|\phi|+2C|\phi|_\infty$ and $\xi=\max\{\xi_0^\eta,\xi_1\}$
yields the desired inequality in Lemma~\ref{LD-lem}.
\end{proof}


\noindent{\it Step3: Estimate of $I\!I\!I$ and an overall estimate.}
It is clear that
 the greatest common divisor of $\{R_i\colon i\geq1 \}$ is $1$.
Using the Perron-Frobenius operator
$\mathcal P\colon L^1(\mu^+)\to L^1(\mu^+)$ given by $F_*(\phi \mu^+)=\mathcal P(\phi)\mu^+,$ we write 
\[\begin{split}I\!I\!I&
={\rm Cor}_{n-2k}(F;\mathcal P^{2k}(\varphi_k),\psi_k;\mu^+).
\end{split}\]

We introduce a separation time $s\colon \Delta_0^+\times\Delta_0^+\to\mathbb Z$ by defining $s(p,p')$ to be the smallest integer $n\geq0$ such that $(f^R)^n(p)$, $(f^R)^n(p')$ lie in different elements of $\mathscr{P}|_{\Delta_0^+}$. For general points $w=(p,\ell)$,
 $w'=(p',\ell')\in\Delta^+$, define $s(w,w')=s(p,p')$ if $\ell=\ell'$ and $ s(w,w')=0$ otherwise.
 This defines a separation time $s\colon\Delta^+\times\Delta^+\to\mathbb Z$. For $\beta\in(0,1]$ define
\[C_\beta(\Delta^+)=\{\phi\colon\Delta^+\to\mathbb R\colon\exists C>0\  \forall w,w'\in\Delta^+,\ |\phi(w)-\phi(w')|\leq C\beta^{s(w,w')}\}.\]
For each $\phi\in C_\beta(\Delta^+)$
define 
\[\|\phi\|=|\phi|_\infty+{\rm esssup}_{\stackrel{w, w'\in\Delta^+}{w\neq w'}}\frac{|\phi(w)-\phi(w')|}{\beta^{s(w,w')}}.\]


\begin{lemma} 
For all $\beta\in(0,1)$ and all $k\geq1$, we have
$\mathcal P^{2k}(\varphi_k)\in C_\beta(\Delta^+)$
and
$\|\mathcal P^{2k}(\varphi_k)\|\leq 3|\varphi|_\infty$. \end{lemma}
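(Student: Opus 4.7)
\medskip

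\noindent\textbf{Proof plan.} The plan is to exploit two structural features of the tower map $F$: the $F$-invariance of $\mu^+$ (which yields $\mathcal{P}(1)=1$), and the fact that $F$ is piecewise affine with a Markov partition $\mathscr{D}_0^+$ finer than the one on which $\varphi_k$ is constant. Together these will force $\mathcal{P}^{2k}(\varphi_k)$ to be \emph{constant on each element of} $\mathscr{D}_0^+$, which handles the separation-time $\beta^{s(w,w')}$-part for $s(w,w')\ge 1$, leaving only the trivial bound $2|\varphi|_\infty$ when $s(w,w')=0$.

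First, I would dispatch the $L^\infty$-bound. Because $\mu^+$ is $F$-invariant, $\mathcal{P}(1)=1$ and hence $\mathcal{P}^{2k}(1)=1$. Since $|\varphi_k|\leq|\varphi|_\infty$ and the kernel defining $\mathcal{P}^{2k}$ is nonnegative, this gives $|\mathcal{P}^{2k}(\varphi_k)|_\infty\leq|\varphi|_\infty$.

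Next, I would show that $\mathcal{P}^{2k}(\varphi_k)(w)=\mathcal{P}^{2k}(\varphi_k)(w')$ whenever $w,w'\in\Delta^+$ satisfy $s(w,w')\ge 1$. Such $w,w'$ lie in a common element of $\mathscr{D}_0^+$, namely some $\Delta_{\ell,i}^+$. Using the Markov property of $(\mathscr{D}_0^+,F)$ together with the injectivity of $F$ on each floor (on non-top floors $F$ only shifts the $\ell$-index, while on the top of column $i$ the map is the affine bijection $f^R$ from $\Delta_{0,i}^+$ onto $\Delta_0^+$), I would set up a canonical bijection $u\leftrightarrow u'$ between the preimages $F^{-2k}(w)$ and $F^{-2k}(w')$: a preimage is uniquely encoded by an admissible itinerary $A_0,A_1,\ldots,A_{2k-1}\in\mathscr{D}_0^+$ ending at the common element $\Delta_{\ell,i}^+$, and the same itinerary determines the corresponding $u'$. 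By construction, corresponding $u,u'$ lie in the same element of $\mathscr{D}_{2k}^+$, so $\varphi_k(u)=\varphi_k(u')$ by definition of $\varphi_k$. Moreover, the Jacobian $J_F$ with respect to $\mu^+$ is constant on each element of $\mathscr{D}_0^+$ (it equals $1$ on non-top floors and $1/|\Delta_{0,i}^+|$ on the top of column $i$); hence $J_{F^{2k}}(u)=J_{F^{2k}}(u')$ as the product of identical factors along the common itinerary. Summing yields
\[
\mathcal{P}^{2k}(\varphi_k)(w)=\sum_{F^{2k}(u)=w}\frac{\varphi_k(u)}{J_{F^{2k}}(u)}=\sum_{F^{2k}(u')=w'}\frac{\varphi_k(u')}{J_{F^{2k}}(u')}=\mathcal{P}^{2k}(\varphi_k)(w').
\]

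Finally, for $s(w,w')=0$ the trivial bound $|\mathcal{P}^{2k}(\varphi_k)(w)-\mathcal{P}^{2k}(\varphi_k)(w')|\leq 2|\mathcal{P}^{2k}(\varphi_k)|_\infty\leq 2|\varphi|_\infty=2|\varphi|_\infty\beta^{s(w,w')}$ applies. Combined with the vanishing difference for $s(w,w')\ge 1$, this bounds the Lipschitz-type seminorm by $2|\varphi|_\infty$, and adding the $L^\infty$-bound gives $\|\mathcal{P}^{2k}(\varphi_k)\|\le 3|\varphi|_\infty$, valid for any $\beta\in(0,1)$.

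The main obstacle is the careful verification of the bijection $u\leftrightarrow u'$ and the equality $J_{F^{2k}}(u)=J_{F^{2k}}(u')$: one must check that $F$'s Markov partition $\mathscr{D}_0^+$ really does refine the partition on which $\varphi_k$ and $J_F$ are constant, and that two corresponding preimages share \emph{every} step of their $\mathscr{D}_0^+$-itinerary (including the returns, where the affine-onto property of $f^R|_{\Delta_{0,i}^+}$ is crucial). Once these bookkeeping issues are settled, the rest of the argument is essentially formal.
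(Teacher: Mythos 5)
Your proposal is correct and follows essentially the same route as the paper: the $L^\infty$-bound comes from $|\varphi_k|_\infty\le|\varphi|_\infty$ together with $\sum_A 1/JF^{2k}(A)\le 1$ (your $\mathcal P^{2k}(1)=1$), and the seminorm bound comes from observing that when $s(w,w')\ge1$ the preimage sums for $w$ and $w'$ match term by term because $\varphi_k$ and the Jacobian are constant on each element of $\mathscr{D}_{2k}^+$. Your itinerary bijection is just a more explicit phrasing of the paper's statement that for each $A\in\mathscr{D}_{2k}^+$ either both $w,w'\in F^{2k}(A)$ or neither.
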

\begin{proof}
For each $A\in\mathscr{D}_{k}^+$,  $F^{k}|_A$ has a measurable inverse,
and the pullback of the measure $\mu^+|_{F^k(A)}$ by
$F^k|_A$ is absolutely continuous with respect to 
$\mu^+|_A$.
The Radon-Nikod\'ym derivative 
${\rm d}(\mu^+|_{F^k(A)}\circ F^k|_A)/{\rm d}\mu^+|_A$
is constant on $A$, which we denote by 
$JF^k(A)$.
Note that
\[\mathcal P^{2k}(\varphi_k)(w)=\sum_{A\in\mathscr{D}_{2k}^+\colon w\in F^{2k}(A)}\frac{\varphi_k|_A}{JF^{2k}(A)}.\]
Moreover,
$JF^{2k}(A)=\mu^+(F^{2k}(A))/\mu^+(A)\geq1/\mu^+(A)$ for all $A\in\mathscr{D}_{2k}^+$, and so
$\sum_{A\in\mathscr{D}_{2k}^+} 1/JF^{2k}(A)\leq1.$
From this and $|\varphi_k|_\infty\leq|\varphi|_\infty$ we obtain
$|\mathcal P^{2k}(\varphi_k)|_\infty\leq|\varphi|_\infty$.

For all $w,w'\in\Delta^+$ with $w\neq w'$ we have
$\left|\mathcal P^{2k}(\varphi_k)(w)-
\mathcal P^{2k}(\varphi_k)(w')\right|\leq 2|\varphi|_\infty.$  If $s(w,w')\geq1$,
then
 for each $A\in\mathscr{D}_{2k}^+$ either
$w,w'\in F^{2k}(A)$ or $w$, $w'\notin F^{2k}(A)$, which implies  
$\left|\mathcal P^{2k}(\varphi_k)(w)-
\mathcal P^{2k}(\varphi_k)(w')\right|=0.$
Hence the conclusion of the lemma holds.
\end{proof}

By \cite[Section~4.2]{You98} (see also \cite[Theorem~1.5]{MD01}), there exist $C>0$ and $\lambda_0\in(0,1)$ such that
\[I\!I\!I\leq 
C\lambda_0^{n-2k}.
\]
Combining this with the estimates of $I$, $I\!I$ in Step~2, and then substituting
$k=\lfloor n/2\rfloor/2$,
we obtain constants $C=C(\varphi,\psi)>0$ and $\lambda\in(0,1)$ such that \[{\rm Cor}_{n}(g;\varphi,\psi;{\rm Leb})={\rm Cor}_{n}(G;\tilde\varphi,\tilde\psi;\mu)\leq C\lambda^n\text{ for all } n\geq1,\] 
namely, the exponential mixing for $(g_a,{\rm Leb})$.
From this and Proposition~\ref{dual}
  we obtain the exponential mixing for $(g_{\frac{1}{m}-a},{\rm Leb})$.
Taking a pair of H\"older continuous functions on $[0,1]^3$  which are constant on sets 
$\{(x,y)\}\times[0,1]$, $(x,y)\in[0,1]^2$,
we obtain the exponential mixing  for $(f_a,{\rm Leb})$ and 
$(f_{\frac{1}{m}-a},{\rm Leb})$.
The proof of Theorem~B is complete.
\qed

\begin{remark}\label{last-rem}
In the case $a=\frac{1}{2m}$ which is excluded in Theorem~B,
it is plausible that correlations for H\"older continuous functions
decay only subexponentially. It would be nice to prove this.
A close inspection into the counting argument in Section~\ref{count-sec} and the stopping time estimate in Section~\ref{ind-exp-sec} reveals 
$|\{R=n\}|\asymp n^{-3/2}$, and as a result 
$\sum_{n}|\{R>n\}|=\infty$. It follows that  the lift of the Lebesgue measure to the tower becomes an infinite measure, and the general result in \cite{You99} to draw subexponential decay of correlations is not applicable.
\end{remark}

\section{Exponential mixing for the Dyck system}\label{exp-mix}

In this last section we prove Theorem~C. 
In Section~\ref{Dyck-mme}, we identify the two ergodic measures of maximal entropy for the Dyck system. In Section~\ref{cor-Dyck-sec} we show the invariance of correlations (modulo involution) under the replacement of the two measures. 
Section~\ref{transfer-sec} provides two preliminary lemmas needed to transfer invariant measures on the two different spaces.
In Section~\ref{ent-Leb} we clarify a connection between one of the two ergodic measures of maximal entropy and the Lebesgue measure on $[0,1]^3$. In Section~\ref{pfthmc} we complete the proof of Theorem~C.

\subsection{Measures of maximal entropy}\label{Dyck-mme}
Let $T$ be a Borel map acting on a topological space.  
For each $T$-invariant Borel probability measure $\mu$, let $h(T,\mu)$ denote the measure-theoretic entropy of $\mu$ with respect to $T$.  If $\sup\{h(T,\mu)\colon\text{$\mu$ is $T$-invariant}\}$ is finite, a measure which attains this supremum is called {\it a measure of maximal entropy}.

Following \cite[Section~4]{Kri74}, for each $i\in\mathbb Z$ we define $H_i\colon \Sigma_D\to\mathbb Z$ by
      \[H_i(\omega)=\begin{cases}\sum_{j=0}^{i-1} \sum_{l=1}^{m}(\delta_{{\alpha_l},\omega_j}-\delta_{\beta_l,\omega_j})&\text{ for }  i\geq1,\\\sum_{j=i}^{-1} \sum_{l=1}^{m}(\delta_{{\beta_l},\omega_j}-\delta_{\alpha_l,\omega_j})&\text{ for } i\leq -1,\\
      0&\text{ for }i=0,\end{cases}\]
      where the delta denotes Kronecker's delta.
       These functions are used to indicate whether a bracket in a prescribed position in a sequence in $\Sigma_D$ is closed or not.
  Consider three shift invariant Borel sets
    \begin{equation}\label{three-sets}\begin{split}
    A_0&=\bigcap_{i=-\infty}^\infty\left(\left(\bigcup_{l=1}^\infty\{ H_{i+l}=H_i\}\right)\cap\left(\bigcup_{l=1}^\infty\{ H_{i-l}=H_i\}\right)\right),\\
A_\alpha&=\left\{\omega\in\Sigma_D\colon
\lim_{i\to\infty}H_i(\omega)
=\infty\ \text{ and } \ \lim_{i\to-\infty}H_i(\omega)=-\infty\right\},\\
A_\beta&=\left\{\omega\in\Sigma_D\colon
\lim_{i\to\infty}H_i(\omega)=-\infty\ \text{ and } \
\lim_{i\to-\infty}H_i(\omega)=\infty\right\}.\end{split}\end{equation}
Any shift invariant ergodic measure on $\Sigma_D$ gives measure $1$ to one of these three sets (see \cite[pp.102--103]{Kri74}).
\begin{thm}[\cite{Kri74}, Section~4]\label{idem-mme}
There exist two ergodic measures $\nu_\alpha$, $\nu_\beta$ of entropy $\log(m+1)$ which are Bernoulli and satisfy 
$\nu_\alpha(A_{\alpha})=1$ and $\nu_\beta(A_{\beta})=1$.
These two measures are precisely all the ergodic measures of maximal entropy for the two-sided Dyck shift.
\end{thm}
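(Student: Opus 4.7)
The plan is to realize $\nu_\alpha$ and $\nu_\beta$ as shift-equivariant factor images of a uniform Bernoulli shift on $m+1$ symbols; this automatically supplies entropy $\log(m+1)$, ergodicity, and, by Ornstein's theorem that every factor of a Bernoulli shift is Bernoulli, the Bernoulli property. The uniqueness part is then reduced to an upper bound on topological entropy combined with a case analysis over the three $\sigma$-invariant sets $A_0$, $A_\alpha$, $A_\beta$.

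\textbf{Construction.} Let $(\{0,1,\ldots,m\}^{\mathbb Z}, \sigma, \mathbb P)$ be the Bernoulli shift with uniform one-dimensional distribution $p(j)=1/(m+1)$, of entropy $\log(m+1)$. Interpret each $j\in\{1,\ldots,m\}$ as the instruction ``push $\alpha_j$'' onto an implicit stack, and $0$ as ``pop and emit the matching $\beta$''. Set $X_n=+1$ if $\xi_n\geq 1$ and $X_n=-1$ if $\xi_n=0$, and let $W_n$ ($n\in\mathbb Z$) be the cumulative walk with $W_0=0$ and $W_{n+1}-W_n=X_n$. Since $m\geq 2$, the drift $\mathbb E X_n=(m-1)/(m+1)>0$, so by the SLLN, $\mathbb P$-a.s.\ $W_n\to\pm\infty$ as $n\to\pm\infty$. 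Consequently, for each $n$ with $\xi_n=0$, the matching-push time $T(n):=\max\{k<n : \xi_k\geq 1,\ W_{k+1}=W_n,\ W_j\geq W_n \text{ for } k<j\leq n\}$ is $\mathbb P$-a.s.\ well defined. Define a shift-equivariant map $\Phi\colon\{0,1,\ldots,m\}^{\mathbb Z}\to D^{\mathbb Z}$ by $\Phi(\xi)_n=\alpha_{\xi_n}$ if $\xi_n\geq 1$ and $\Phi(\xi)_n=\beta_{\xi_{T(n)}}$ if $\xi_n=0$; a direct check against \eqref{d-rel} confirms $\Phi(\xi)\in\Sigma_D$ $\mathbb P$-a.s. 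Put $\nu_\alpha:=\Phi_*\mathbb P$, and define $\nu_\beta:=(\iota_0)_*\nu_\alpha$ via the involution $\iota_0\colon D^{\mathbb Z}\to D^{\mathbb Z}$ given by $(\iota_0\omega)_n=\mathrm{swap}(\omega_{-n})$, where swap exchanges $\alpha_i\leftrightarrow\beta_i$; one verifies $\iota_0\sigma=\sigma^{-1}\iota_0$, so $\nu_\beta$ is $\sigma$-invariant, and $\iota_0A_\alpha=A_\beta$.

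\textbf{Properties.} A direct computation of cancellations in the definition of $H_i$ gives $H_i\circ\Phi=W_i$ for all $i\in\mathbb Z$, so the SLLN yields $\nu_\alpha(A_\alpha)=1$ and symmetrically $\nu_\beta(A_\beta)=1$. As a factor image of a Bernoulli system, $(\sigma,\nu_\alpha)$ is ergodic. Since $\Phi$ is $\mathbb P$-a.s.\ injective (one recovers $\xi_n$ from $\Phi(\xi)_n$ via $\alpha_j\mapsto j$ and $\beta_j\mapsto 0$), it follows that $h(\sigma,\nu_\alpha)=\log(m+1)$, and Ornstein's theorem implies $\nu_\alpha$ is Bernoulli. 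The same holds for $\nu_\beta$.

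\textbf{Topological entropy and uniqueness.} For $h_{\mathrm{top}}(\Sigma_D)\leq\log(m+1)$, one counts the admissible length-$n$ word set $L_n$: every $w\in L_n$ has a unique fully reduced form $\beta_{j_1}\cdots\beta_{j_p}\alpha_{i_1}\cdots\alpha_{i_q}$ by iterated use of \eqref{d-rel}, and the number of ways to insert balanced (Dyck) fillings between consecutive unmatched letters is governed by convolutions of Catalan-type numbers; a standard generating-function computation yields $|L_n|^{1/n}\to m+1$. Combined with $h(\sigma,\nu_\alpha)=\log(m+1)$, this shows $\nu_\alpha,\nu_\beta$ are MMEs. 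For uniqueness, any ergodic $\sigma$-invariant $\mu$ assigns mass $1$ to exactly one of $A_0$, $A_\alpha$, $A_\beta$. If $\mu(A_\alpha)=1$, then $\Phi$ is $\mu$-a.s.\ invertible on its image, so $\Phi^{-1}_*\mu$ is a $\sigma$-invariant probability on $\{0,\ldots,m\}^{\mathbb Z}$ of entropy $\log(m+1)$, which forces it to be the uniform Bernoulli measure and hence $\mu=\nu_\alpha$. The case $\mu(A_\beta)=1$ is symmetric.

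\textbf{Main obstacle.} The delicate step is excluding $\mu(A_0)=1$ for any MME. Sequences in $A_0$ admit a natural Dyck-word block decomposition at zero-level recurrence times of $H_i$, and the corresponding induced subshift is essentially the language of fully matched bracket words, whose topological entropy equals $\tfrac{1}{2}\log(4m)$. Since $(m+1)^2>4m$ for $m\geq 2$, combining this with Abramov's formula for entropy under inducing yields $h(\sigma,\mu)<\log(m+1)$ for every ergodic $\mu$ supported on $A_0$, completing the classification.
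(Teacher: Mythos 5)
The paper does not prove this theorem: it is imported from Krieger's 1974 paper, so there is no in-paper argument to compare against, and your proposal is in substance a reconstruction of Krieger's original route. The construction and the $A_\alpha$, $A_\beta$ cases are sound. The push/pop coding $\Phi$ is well defined a.s.\ because the positive drift makes $W_n\to-\infty$ as $n\to-\infty$, the identity $H_i\circ\Phi=W_i$ is correct, and the letter-forgetting map $\Psi$ ($\alpha_j\mapsto j$, $\beta_j\mapsto 0$) is a two-sided inverse of $\Phi$ on $A_\alpha$ because there every closing bracket is matched and its index is forced by \eqref{d-rel}; hence an ergodic $\mu$ with $\mu(A_\alpha)=1$ and entropy $\log(m+1)$ pushes forward to the unique MME of the full $(m+1)$-shift and equals $\nu_\alpha$. (Two remarks: since $\Phi$ is a.e.\ bijective you do not need Ornstein's factor theorem---$(\sigma,\nu_\alpha)$ is isomorphic to the Bernoulli shift; and the word-counting bound on $h_{\rm top}(\Sigma_D)$ is redundant, because the three-case analysis already bounds the entropy of every ergodic measure.)

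The one genuine gap is the $A_0$ case, which you correctly flag as the main obstacle but leave as a sketch. To make the inducing/Abramov route rigorous you must (i) construct from the return times of $H$ to its time-zero level a measurable cross-section $Y$ of positive measure (using $\mu(A_0)=1$ and Kac's lemma), (ii) verify that the partition of $Y$ into first excursions generates under the induced map, and (iii) supply the quantitative input: the excursion generating function is $E(x)=1-\sqrt{1-4mx^2}$, so $\sum_e(m+1)^{-|e|}=E(1/(m+1))=2/(m+1)<1$, and only then do the Gibbs inequality $H(p)\le\beta\sum_e p(e)\,|e|+\log\sum_e e^{-\beta|e|}$ with $\beta=\log(m+1)$ and Abramov's formula yield $h(\sigma,\mu)<\log(m+1)$. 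None of this is in your text, and the phrase ``the induced subshift is essentially the language of fully matched bracket words'' hides exactly the point at issue (negative excursions occur as well, and the support of $\mu$ need not be contained in any balanced-word subshift). A shorter argument for the same bound: on $A_0$ every bracket is matched within the bi-infinite sequence, so $\omega$ is recovered from its $\{\alpha,\beta\}$-pattern together with the indices of the $\alpha$'s alone; the pattern factor has entropy at most $\log 2$, recurrence of $H$ forces the $\alpha$-density to equal $\tfrac12$ a.e., so the conditional entropy of the index process is at most $\tfrac12\log m$, giving $h(\sigma,\mu)\le\tfrac12\log(4m)<\log(m+1)$ directly.
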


Let $\nu_\alpha^+$ (resp. $\nu_\beta^+$) denote the shift invariant measure on $\Sigma_D^+$
which is the push-forward of $\nu_\alpha$ (resp. $\nu_\beta$) under the canonical projection $\Sigma_D\to\Sigma_D^+$.
These two measures are precisely all the ergodic measures of maximal entropy for the one-sided Dyck shift.

We recall the result in \cite{TY} on measures of maximal entropy for the heterochaos baker maps.
Let $c_1=\frac{1}{m(m+1)}$ and $c_2=\frac{1}{m+1}$.
 \begin{thm}[\cite{TY}, Theorem~1.2]\label{thm1.2}
For any $a,b\in(0,\frac{1}{m})$,
there exist two $f_{a,b}$-invariant
 ergodic Borel probability measures $\mu_\alpha$, $\mu_\beta$ 
 of entropy $\log (m+1)$ which are Bernoulli, 
 give positive weight to any non-empty open subset of $[0,1]^3$, and satisfy
\[\mu_\alpha\left(\bigcup_{i=1}^m\Omega_{\alpha_i}\right)=
\mu_\beta\left(\bigcup_{i=1}^{m}
\Omega_{\beta_i}\right)=\frac{m}{m+1}.\]
Moreover, if $a\in[c_1,c_2]$ or $b\in[c_1,c_2]$
then $\mu_\alpha$, $\mu_\beta$ are measures of maximal entropy for $f_{a,b}$. 
If $a\in(c_1,c_2)$ or $b\in(c_1,c_2)$ then
there is no  ergodic measure of maximal entropy for $f_{a,b}$ other than $\mu_\alpha$, $\mu_\beta$.
\end{thm}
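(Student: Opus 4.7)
The plan is to construct $\mu_\alpha$ and $\mu_\beta$ by pulling the two Krieger measures $\nu_\alpha$ and $\nu_\beta$ back through the coding map $\pi_{a,b}$ furnished by Theorem~\ref{TY-thm}, and then to bootstrap the listed properties from known properties of $\nu_\alpha,\nu_\beta$. The key point is that the intertwining identity $\sigma\circ\pi_{a,b}=\pi_{a,b}\circ f_{a,b}$ makes $\pi_{a,b}$ a measurable isomorphism between $(f_{a,b},\mu_\gamma)$ and $(\sigma,\nu_\gamma)$ modulo negligible sets, so that the Bernoulli property, ergodicity, and the entropy value $\log(m+1)$ transfer automatically.

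First I would verify that $\pi_{a,b}$ is essentially injective on $\Lambda_{a,b}$. The points at which $\pi_{a,b}$ fails to be injective lie in the $f_{a,b}$-orbit of $\bigcup_{\gamma\in D}\partial\Omega_\gamma$, a countable union of codimension-one pieces; their $\pi_{a,b}$-images form a countable union of cylinders on which both $\nu_\alpha$ and $\nu_\beta$ vanish, thanks to the characterization \eqref{three-sets} via the bracket-height functions $H_i$ (sequences supported by a boundary orbit fail to satisfy $H_i\to\pm\infty$). Having established this, I set $\mu_\gamma(B)=\nu_\gamma(\pi_{a,b}(B\cap\Lambda_{a,b}))$ for Borel $B\subset[0,1]^3$; this is a well-defined Borel probability measure, is $f_{a,b}$-invariant and ergodic, and its $\pi_{a,b}$-pushforward is $\nu_\gamma$, so $h(f_{a,b},\mu_\gamma)=h(\sigma,\nu_\gamma)=\log(m+1)$.

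The weight condition $\mu_\alpha\bigl(\bigcup_i\Omega_{\alpha_i}\bigr)=m/(m+1)$ reduces to the computation $\nu_\alpha\bigl(\bigcup_i[\alpha_i]\bigr)=m/(m+1)$ inside Krieger's construction, and the $\beta$-case follows from the symmetry between the two Krieger measures (also reflected in the involution of Section~\ref{3d}). Positivity of $\mu_\gamma$ on non-empty open sets is immediate, because any such set intersects $\Lambda_{a,b}$ and its $\pi_{a,b}$-image contains a non-empty cylinder of $\Sigma_D$, and $\nu_\alpha,\nu_\beta$ give positive mass to every non-empty cylinder.

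The main obstacle is the maximality assertion on $[c_1,c_2]$ and the uniqueness assertion on $(c_1,c_2)$. For maximality I would prove $h_{\mathrm{top}}(f_{a,b})\leq\log(m+1)$ in the stated parameter range by counting $n$-cylinders of the generating partition $\{\mathrm{int}(\Omega_\gamma)\}_{\gamma\in D}$ through a three-dimensional version of the Markov diagram of Section~\ref{Hofbauer-sec}: the Dyck relations \eqref{d-rel} cut the naive growth $(2m)^n$ down to $(m+1)^n$ at leading order, and the choice $a,b\in[c_1,c_2]$ is precisely what forces the weighted counts of affine pieces to balance exactly. Combined with the variational principle this yields $h_{\mathrm{top}}(f_{a,b})=\log(m+1)$. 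For uniqueness, any other ergodic measure of maximal entropy must give full mass to $\Lambda_{a,b}$ (since the boundary orbit set has zero entropy), so its $\pi_{a,b}$-pushforward is an ergodic $\sigma$-invariant measure of entropy $\log(m+1)$ on $\Sigma_D$, hence by Theorem~\ref{idem-mme} equals $\nu_\alpha$ or $\nu_\beta$, and essential injectivity of $\pi_{a,b}$ forces the measure itself to be $\mu_\alpha$ or $\mu_\beta$. I expect the explicit entropy computation at the endpoints $c_1,c_2$, where the combinatorial bound saturates, to be the most delicate step; everything else is a routine measure-theoretic transfer.
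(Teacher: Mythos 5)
This theorem is imported verbatim from \cite{TY} (Theorem~1.2); the present paper gives no proof, so your attempt can only be measured against the machinery the paper does quote from \cite{TY}. Your strategy for the first half (pull Krieger's measures $\nu_\alpha,\nu_\beta$ back through $\pi_{a,b}$) is the right one, but your key injectivity step is mis-founded. You attribute the non-injectivity of $\pi_{a,b}$ to the orbit of $\bigcup_{\gamma}\partial\Omega_\gamma$; on $\Lambda_{a,b}$ (whose points by definition never meet those boundaries) that is not the issue at all. The coding fails to be injective because $y$ and $z$ are center directions: whole segments $\{x\}\times I\times\{z\}$ share one itinerary whenever the bracket heights $H_i$ do not tend to $\mp\infty$, and this degeneracy locus is not a countable union of cylinders. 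The correct statement is Lemma~\ref{m-bij}: $\pi$ is injective only on $\pi^{-1}(A_{\alpha,\beta})$, and one then invokes $\nu_\alpha(A_\alpha)=\nu_\beta(A_\beta)=1$. Relatedly, to define $\mu_\gamma$ as a pullback you need $\nu_\gamma(\pi_{a,b}(\Lambda_{a,b}))=1$, which Theorem~\ref{TY-thm} (an equality of \emph{closures}) does not provide and which you never establish.

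The more serious gap is in the maximality/uniqueness part. The Markov diagram and the cylinder counts of $\{\Omega_\gamma\}$ do not depend on $(a,b)$, so no purely symbolic counting argument can detect the hypothesis $a\in[c_1,c_2]$ or $b\in[c_1,c_2]$ --- yet the conclusion genuinely requires it. The hypothesis is needed exactly for the measures your uniqueness step silently discards: an ergodic $\mu$ with $\int\phi^c\,{\rm d}\mu=0$ projects into $A_0$ (Lemma~\ref{class-lem}), where $\pi_{a,b}$ is \emph{not} injective, so $h(f_{a,b},\mu)$ may strictly exceed $h(\sigma,\mu\circ\pi_{a,b}^{-1})$ and your reduction to Krieger's theorem collapses. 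For such neutral-center measures the entropy must instead be bounded by the unstable expansion, $h(f_{a,b},\mu)\le\tfrac12\log\tfrac1a+\tfrac12\log\tfrac1{1-ma}=\chi(a)$ in the notation of Section~\ref{stop-sec}, and a direct computation shows $\chi(a)\le\log(m+1)$ exactly when $a\in[c_1,c_2]$, with strict inequality on the open interval (similarly for $b$ via the inverse map, whence the ``or''). This dichotomy between measures charging $A_\alpha\cup A_\beta$ and measures with neutral center is the actual content of the cited theorem; it is absent from your sketch, and without it both the maximality bound and the uniqueness claim are unproved.
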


\subsection{Invariance of correlations}\label{cor-Dyck-sec} 
We define an involution $\rho\colon D\to D$ by $\rho (\alpha_i)=\beta_i$ and $\rho(\beta_i)=\alpha_i$
for $i\in\{1,\ldots,m\}$, and $\iota_D\colon \Sigma_D\to D^\mathbb Z$
by $\iota_D((\omega_n)_{n\in\mathbb Z})=(\rho(\omega_{-n}))_{n\in\mathbb Z}$. Clearly $\iota_D$ is injective, and 
$\iota_D(\Sigma_D)=\Sigma_D$
as in Lemma~\ref{iotaD-lem} below.
\begin{prop}
\label{dual-Dyck}
For all $\varphi,\psi\in L^2(\nu_\beta)$ and all $n\geq1$ 
we have
\[{\rm Cor}_n(\sigma;\varphi,\psi;\nu_\beta)={\rm Cor}_{n}(\sigma;\psi\circ\iota_D,\varphi\circ\iota_D;\nu_\alpha).\]
\end{prop}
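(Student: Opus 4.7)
My plan is to use $\iota_D$ to implement a measure-theoretic conjugacy between $(\sigma,\nu_\beta)$ and $(\sigma^{-1},\nu_\alpha)$, and then massage the correlation integrand using shift-invariance of $\nu_\alpha$.

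First I would verify two algebraic properties of $\iota_D$: the involution property $\iota_D^2=\mathrm{id}_{\Sigma_D}$ (immediate from $\rho^2=\mathrm{id}$), and the intertwining
\[
\sigma\circ\iota_D=\iota_D\circ\sigma^{-1},
\]
which is a one-line coordinate check: $(\sigma\iota_D\omega)_n=(\iota_D\omega)_{n+1}=\rho(\omega_{-n-1})=(\iota_D\sigma^{-1}\omega)_n$. Together with Lemma~\ref{iotaD-lem}, this shows $\iota_D$ is a bimeasurable involution of $\Sigma_D$ that sends $\sigma$ to $\sigma^{-1}$.

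The main content is to establish
\[
\iota_{D*}\nu_\beta:=\nu_\beta\circ\iota_D^{-1}=\nu_\alpha.
\]
The push-forward is $\sigma$-invariant (by the intertwining and $\sigma$-invariance of $\nu_\beta$) and ergodic, with entropy $h(\sigma^{-1},\nu_\beta)=h(\sigma,\nu_\beta)=\log(m+1)$. By Theorem~\ref{idem-mme} it is either $\nu_\alpha$ or $\nu_\beta$. To single out the correct one, I would compute $H_i\circ\iota_D$ from \eqref{three-sets}. Reindexing $j\mapsto -j$ in the definitions gives, for $i\geq 2$,
\[
H_i(\iota_D\omega)=H_{-(i-1)}(\omega)+\sum_{l=1}^{m}(\delta_{\beta_l,\omega_0}-\delta_{\alpha_l,\omega_0}),
\]
and a parallel identity for $i\leq -1$ reads
\[
H_{-i}(\iota_D\omega)=H_{i+1}(\omega)-\sum_{l=1}^{m}(\delta_{\alpha_l,\omega_0}-\delta_{\beta_l,\omega_0}).
\]
Taking $i\to\infty$ in both formulas shows $\iota_D(A_\beta)\subset A_\alpha$. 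Hence $\iota_{D*}\nu_\beta$ gives full measure to $A_\alpha$, and the uniqueness in Theorem~\ref{idem-mme} forces $\iota_{D*}\nu_\beta=\nu_\alpha$.

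Finally, for $\varphi,\psi\in L^2(\nu_\beta)$, the change of variables $\omega=\iota_D\omega'$ gives $\int\phi\,d\nu_\beta=\int\phi\circ\iota_D\,d\nu_\alpha$ for every $\phi\in L^1(\nu_\beta)$, and therefore
\[
\int\varphi(\psi\circ\sigma^n)\,d\nu_\beta=\int(\varphi\circ\iota_D)(\psi\circ\iota_D\circ\sigma^{-n})\,d\nu_\alpha=\int(\psi\circ\iota_D)((\varphi\circ\iota_D)\circ\sigma^n)\,d\nu_\alpha,
\]
where the first equality uses the intertwining $\sigma^n\iota_D=\iota_D\sigma^{-n}$, and the second uses $\sigma$-invariance of $\nu_\alpha$ to shift by $n$. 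The product-of-means terms transform the same way, which yields the claimed identity. The only nontrivial obstacle is the explicit verification that $\iota_D$ swaps $A_\alpha$ and $A_\beta$; this is bookkeeping with the $H_i$ and nothing deeper, so the rest is routine.
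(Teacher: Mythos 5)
Your proposal is correct and follows essentially the same route as the paper: conjugate by the involution $\iota_D$, identify $\iota_{D*}\nu_\beta=\nu_\alpha$ among the two ergodic measures of maximal entropy, and then change variables using the intertwining $\sigma\circ\iota_D=\iota_D\circ\sigma^{-1}$ and shift-invariance of $\nu_\alpha$. The only (harmless) variation is that you distinguish $\nu_\alpha$ from $\nu_\beta$ via the sets $A_\alpha$, $A_\beta$ and the behavior of $H_i\circ\iota_D$ — which fits Theorem~\ref{idem-mme} directly, modulo a small index slip in your second identity — whereas the paper uses the cylinder-weight identity $\nu(\bigcup_i[\alpha_i])=\nu\circ\iota_D^{-1}(\bigcup_i[\beta_i])$.
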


\begin{proof}By Lemma~\ref{iotaD-lem} below,
for any shift invariant measure $\nu$ on $\Sigma_D$
we have
   $h(\sigma,\nu)=h(\sigma,\nu\circ\iota_D^{-1})$ and $\nu(\bigcup_{i=1}^m[\alpha_i])
    =\nu\circ\iota_D^{-1}(\bigcup_{i=1}^m[\beta_i])$ where
    $[\gamma]=\{(\omega_n)_{n\in\mathbb Z}\colon\omega_0=\gamma\}$ for $\gamma\in D$.
This implies $\nu_{\beta}=\nu_{\alpha}\circ\iota_D^{-1}$. 
We have
\[\begin{split}\int\varphi(\psi\circ\sigma^n){\rm d}\nu_\beta&=\int\varphi\circ\iota_D(\psi\circ\sigma^n\circ\iota_D){\rm d}\nu_\alpha\\
&=\int\varphi\circ\iota_D(\psi\circ\iota_D\circ\sigma^{-n}){\rm d}\nu_\alpha\\
&=\int\varphi\circ\iota_D\circ\sigma^n(\psi\circ\iota_D){\rm d}\nu_\alpha,\end{split}\]
  and $\int\varphi{\rm d}\nu_\beta=\int\varphi\circ\iota_D{\rm d}\nu_\alpha$ and
$\int\psi{\rm d}\nu_\beta=\int\psi\circ\iota_D{\rm d}\nu_\alpha$. Hence the desired equality holds.
\end{proof}

\begin{lemma}\label{iotaD-lem} 
We have $\iota_D(\Sigma_D)=\Sigma_D$, and  $\iota_D\circ\sigma=\sigma^{-1}\circ\iota_D$. 
\end{lemma}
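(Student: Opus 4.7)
My plan is to verify the shift intertwining by direct computation and to handle the invariance of $\Sigma_D$ via an algebraic observation about the Dyck monoid.

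For the intertwining relation, observe that for $\omega\in\Sigma_D$ and $n\in\mathbb Z$,
\[(\iota_D\circ\sigma)(\omega)_n=\rho((\sigma\omega)_{-n})=\rho(\omega_{-n+1})=(\iota_D\omega)_{n-1}=(\sigma^{-1}\circ\iota_D)(\omega)_n,\]
so $\iota_D\circ\sigma=\sigma^{-1}\circ\iota_D$ as soon as the set-level claim is proved. This step is essentially immediate.

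The substantive content is the identity $\iota_D(\Sigma_D)=\Sigma_D$. Since $\rho^2=\mathrm{id}_D$, a direct check shows $\iota_D^2=\mathrm{id}_{D^{\mathbb Z}}$, so it suffices to prove $\iota_D(\Sigma_D)\subseteq\Sigma_D$. The key is to exploit the symmetry of the defining relations \eqref{d-rel}: the assignment $\tau(\gamma_1\gamma_2\cdots\gamma_n)=\rho(\gamma_n)\cdots\rho(\gamma_2)\rho(\gamma_1)$, together with $\tau(1)=1$, $\tau(0)=0$, is compatible with the Dyck relations, because $\tau(\alpha_i\cdot\beta_j)=\tau(\beta_j)\cdot\tau(\alpha_i)=\alpha_j\cdot\beta_i=\delta_{i,j}=\tau(\delta_{i,j})$. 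Thus $\tau$ descends to a well-defined anti-involution of the Dyck monoid that fixes $0$; in particular a word $w$ reduces to $0$ if and only if $\tau(w)$ does.

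Now for $\omega\in\Sigma_D$ and integers $i<j$, a direct computation gives
\[(\iota_D\omega)_i(\iota_D\omega)_{i+1}\cdots(\iota_D\omega)_j=\rho(\omega_{-i})\rho(\omega_{-i-1})\cdots\rho(\omega_{-j})=\tau(\omega_{-j}\omega_{-j+1}\cdots\omega_{-i}).\]
Since $-j<-i$ and $\omega\in\Sigma_D$, the word $\omega_{-j}\cdots\omega_{-i}$ reduces to a non-zero element, hence so does its image under $\tau$. Therefore $\mathrm{red}((\iota_D\omega)_i\cdots(\iota_D\omega)_j)\neq 0$ for all $i<j$, which means $\iota_D\omega\in\Sigma_D$, completing the proof.

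The only nontrivial point is the well-definedness of the anti-involution $\tau$ on the Dyck monoid; once that is in place, everything else is a one-line verification. No further obstacle is anticipated.
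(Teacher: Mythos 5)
Your proof is correct and follows essentially the same route as the paper: both hinge on the reversal-with-$\rho$ map (your $\tau$, the paper's $\rho^*$) being compatible with the Dyck relations \eqref{d-rel}, so that admissibility of finite subwords is preserved. The only difference is cosmetic: the paper concludes via Krieger's normal form ${\rm red}(\lambda)=\xi\eta$ with $\xi$ all right brackets and $\eta$ all left brackets, whereas you conclude more abstractly from the fact that $\tau$ is a monoid anti-involution fixing $0$ — a slightly cleaner finish that also makes explicit the well-definedness step the paper leaves implicit.
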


\begin{proof}
The second assertion of the lemma is a consequence of the first one, which was proved in \cite[Section~3.5]{TY}. We include the proof here for the reader's convenience.
For $n\geq1$ and $\lambda=\gamma_1\cdots \gamma_n\in L(\Sigma_D)$ we set
$\rho^{\ast}(\lambda)=\rho(\gamma_n)\cdots \rho(\gamma_1)$. It is enough to show that 
$\rho^{\ast}(\lambda)\in L(\Sigma_D)$, namely
${\rm red}(\rho^{\ast}(\lambda))\neq0$.
 By the relations \eqref{d-rel}, either (i) ${\rm red}(\lambda)=1$, or (ii) 
${\rm red}(\lambda)=\xi\eta$
for some $\xi\in L(\{\beta_1,\ldots,\beta_m\}^{\mathbb{Z}})$ and
$\eta\in L(\{\alpha_1,\ldots,\alpha_m\}^{\mathbb{Z}})$.
 In case (i), clearly we have ${\rm red}(\rho^{\ast}(\lambda ))=1$.
In case (ii), we have 
 $\rho^{\ast}(\eta)\in
L(\{\beta_1,\ldots,\beta_m\}^{\mathbb{Z}})$ and
 $\rho^{\ast}(\xi)\in
L(\{\alpha_1,\ldots,\alpha_m\}^{\mathbb{Z}})$, and so
${\rm red}(\rho^{\ast}(\lambda))=
\rho^{\ast}(\eta)\rho^{\ast}(\xi)\neq0$.
 \end{proof}

\subsection{Transferring invariant measures}\label{transfer-sec}
In order to transfer invariant measures on the two different spaces
under the coding map, we need two lemmas.
We extend the central Jacobian to a function on $[0,1]^3$ in the obvious way, and still denote the extension by $\phi^c$: $\phi^c(x,y,z)=\phi^c(x,y)$ for $(x,y,z)\in[0,1]^3$.
\begin{lemma}\label{class-lem}
Let $a,b\in(0,\frac{1}{m})$ and
    let $\mu$ be an $f_{a,b}$-invariant ergodic Borel probability measure
    satisfying $\mu(\Lambda_{a,b})=1$.
    \begin{itemize}
    \item[(a)] If $\int\phi^c{\rm d}\mu=0$ then $\mu\circ\pi^{-1}(A_0)=1$.
    \item[(b)] If
    $\int\phi^c{\rm d}\mu<0$ then $\mu\circ\pi^{-1}(A_\alpha)=1$.
    \item[(c)] If
    $\int\phi^c{\rm d}\mu>0$ then $\mu\circ\pi^{-1}(A_\beta)=1$.\end{itemize}
\end{lemma}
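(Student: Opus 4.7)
The plan is to translate, via the coding $\pi_{a,b}$, the conditions defining $A_0$, $A_\alpha$, $A_\beta$ into statements about Birkhoff sums of $\phi^c$, and then conclude by the ergodic theorem together with Atkinson's recurrence theorem in the critical case. First I would verify the cocycle identity $S_n\phi^c(p)=-H_n(\pi_{a,b}(p))\log m$ for $n\geq 1$ and $\sum_{j=-n}^{-1}\phi^c(f_{a,b}^j(p))=H_{-n}(\pi_{a,b}(p))\log m$ for $n\geq 1$, which follows directly from \eqref{geometric-c} together with the fact that $\omega_j$ is some $\alpha_l$ (resp.\ $\beta_l$) exactly when $f_{a,b}^j(p)\in\Omega_{\alpha_l}$ (resp.\ $\Omega_{\beta_l}$).

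For cases (b) and (c), I would apply Birkhoff's ergodic theorem to $(f_{a,b},\mu)$ both forward and backward, noting that $f_{a,b}|_{\Lambda_{a,b}}$ is invertible mod null so that $\mu$ is also $f_{a,b}^{-1}$-ergodic. In case (b), $\int\phi^c\,{\rm d}\mu<0$ yields $S_n\phi^c(p)\to-\infty$ and $\sum_{j=-n}^{-1}\phi^c(f_{a,b}^j(p))\to-\infty$ for $\mu$-a.e.\ $p$, which by the cocycle identity forces $H_n(\pi_{a,b}(p))\to+\infty$ and $H_{-n}(\pi_{a,b}(p))\to-\infty$, i.e., $\pi_{a,b}(p)\in A_\alpha$ by \eqref{three-sets}. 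Case (c) is entirely symmetric.

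Case (a) is more delicate: Birkhoff only yields $S_n\phi^c(p)/n\to 0$, which does not prevent $S_n\phi^c$ from drifting to $\pm\infty$ sublinearly. To close this gap, I would invoke Atkinson's recurrence theorem applied to the ergodic system $(f_{a,b},\mu)$ and the bounded mean-zero function $\phi^c$, yielding $\liminf_n|S_n\phi^c(p)|=0$ for $\mu$-a.e.\ $p$. Since $\phi^c$ only takes the two values $\pm\log m$, this forces $S_l\phi^c(p)=0$ for infinitely many $l\geq 1$, equivalently $H_l(\pi_{a,b}(p))=0$ for infinitely many $l\geq 1$; applying Atkinson to $(f_{a,b}^{-1},\mu)$ gives the analogous backward statement. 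Consequently, neither $\lim_{i\to+\infty}H_i(\pi_{a,b}(p))$ nor $\lim_{i\to-\infty}H_i(\pi_{a,b}(p))$ can be $\pm\infty$, so $\mu\circ\pi_{a,b}^{-1}$ assigns zero mass to $A_\alpha\cup A_\beta$. Since $\mu\circ\pi_{a,b}^{-1}$ is $\sigma$-ergodic as a factor of $(f_{a,b},\mu)$ and every shift invariant ergodic measure on $\Sigma_D$ gives full mass to exactly one of $A_0,A_\alpha,A_\beta$, the conclusion $\mu\circ\pi_{a,b}^{-1}(A_0)=1$ follows.

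The main obstacle is precisely the borderline case (a): one must upgrade the law of large numbers, which is all that ergodicity plus $\int\phi^c\,{\rm d}\mu=0$ delivers, to recurrence of the $\mathbb Z$-valued cocycle $\phi^c/\log m$. Atkinson's theorem (equivalently, conservativity of the associated skew product $\mathbb Z$-extension over the ergodic base) is the cleanest tool, although one could instead invoke the elementary fact that a bounded mean-zero ergodic cocycle into $\mathbb Z$ is recurrent.
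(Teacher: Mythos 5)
Your proof is correct and follows the same basic route as the paper: translate $H_i$ into Birkhoff sums of $\phi^c$ via the coding, use Krieger's trichotomy (every shift invariant ergodic measure on $\Sigma_D$ gives full mass to exactly one of $A_0$, $A_\alpha$, $A_\beta$), and apply the ergodic theorem forward and backward for cases (b) and (c). The one place where you go beyond the paper is case (a): the paper's proof invokes only Birkhoff's theorem, which gives $S_n\phi^c/n\to0$ and by itself does not rule out sublinear drift of the cocycle to $\pm\infty$; you correctly identify this gap and close it with Atkinson's recurrence theorem (recurrence of a mean-zero integrable cocycle over an ergodic base), using the $\mathbb Z$-valuedness of $\phi^c/\log m$ to upgrade $\liminf_n|S_n\phi^c|=0$ to $S_n\phi^c=0$ infinitely often in both time directions, and then letting the trichotomy force full mass on $A_0$. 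This is a genuinely more complete treatment of the borderline case (which, to be fair, is the one case the paper never actually uses downstream — only case (b) enters the proof of Proposition~\ref{correspond}); the cost is the extra external input of Atkinson's theorem, which the paper's citation of Krieger's classification arguably subsumes but does not make explicit.
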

\begin{proof}
Since shift invariant ergodic measures on $\Sigma_D$ give measure $1$ to one of the three sets in \eqref{three-sets} (see \cite[pp.102--103]{Kri74}),
the assertions are consequences of the definition of $H_i$, $\phi^c$ and Birkhoff's ergodic theorem.
\end{proof}

Let
\[A_{\alpha,\beta}=\left\{\omega\in\Sigma_D\colon \liminf_{i\to\infty}H_i(\omega)=-\infty\ \text{ or } \ \liminf_{i\to-\infty}H_i(\omega)=-\infty\right\}.\]
Note that $A_{\alpha,\beta}$ is shift invariant and contains $A_\alpha\cup A_\beta$.
\begin{lemma}[\cite{TY}, Lemma~3.1]\label{m-bij}
For all $a,b\in(0,\frac{1}{m})$,
the restriction of  $\pi$ to $\pi^{-1}(A_{\alpha,\beta})$ is a homeomorphism onto its image.
\end{lemma}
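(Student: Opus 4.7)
The plan is to decouple the three coordinate directions under $f_{a,b}$ and exploit their distinct dynamical behaviors: $x$ is uniformly expanding, $z$ is uniformly contracting (hence uniformly expanded by the inverse branches of $f_{a,b}$ on interiors), and $y$ is a center whose behavior is recorded by $\phi^c$ and therefore by the functions $H_i$. Continuity of $\pi$ on $\Lambda_{a,b}$ is immediate, since if $p_n\to p$ in $\Lambda_{a,b}$ then for every fixed $j$ we have $f_{a,b}^{j}(p_n)\in\mathrm{int}(\Omega_{\omega_j})$ eventually. It thus suffices to show that for every $\omega\in A_{\alpha,\beta}\cap\pi(\Lambda_{a,b})$ the cylinder
\[C_N(\omega)=\bigcap_{|j|\le N}f_{a,b}^{-j}(\mathrm{int}(\Omega_{\omega_j}))\]
has diameter going to $0$ as $N\to\infty$; this yields both injectivity of $\pi|_{\pi^{-1}(A_{\alpha,\beta})}$ and continuity of its inverse at $\omega$.

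I would bound the three sides of the box $C_N(\omega)$ separately. The $x$-width is at most $\max\{a,1-ma\}^N$ by uniform $x$-expansion of $f_{a,b}$. The $z$-Jacobian equals $1-mb$ on $\Omega_\alpha$ and $b$ on $\Omega_\beta$, both strictly less than $1$, so the inverse branches of $f_{a,b}$ are uniform $z$-expansions and the $z$-width of $C_N(\omega)$ is at most $\max\{1-mb,b\}^N$. For the $y$-direction I would observe that the $y$-Jacobian of $f_{a,b}$ at $p$ equals $\exp\phi^c(p)$, which together with the definition of $H_i$ gives
\begin{gather*}
|f_{a,b}^{n}(p)_y-f_{a,b}^{n}(q)_y|=m^{-H_n(\omega)}|p_y-q_y|,\\
|f_{a,b}^{-n}(p)_y-f_{a,b}^{-n}(q)_y|=m^{-H_{-n}(\omega)}|p_y-q_y|,
\end{gather*}
valid for any $p,q\in C_N(\omega)$ and $0\le n\le N$. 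If $\liminf_{i\to\infty}H_i(\omega)=-\infty$, I would select $n_k\to\infty$ with $H_{n_k}(\omega)\to-\infty$ and apply the first identity to force the $y$-width of $C_N(\omega)$ to zero; the other case in the definition of $A_{\alpha,\beta}$ is handled symmetrically via the second identity.

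Once the three widths go to $0$, injectivity is immediate (two preimages of $\omega$ lie in every $C_N(\omega)$, hence coincide), and continuity of $\pi^{-1}$ at $\omega$ follows by choosing $N$ so that $\mathrm{diam}(C_N(\omega))<\varepsilon$ and noting that any $\omega'$ close enough to $\omega$ in the Hamming metric agrees with $\omega$ on positions $|j|\le N$. The main obstacle is the $y$-direction: unlike the $x$- and $z$-widths, the $y$-width of $C_N(\omega)$ may shrink arbitrarily slowly, and in fact need not shrink at all when $\omega\notin A_{\alpha,\beta}$. The set $A_{\alpha,\beta}$ is precisely the symbolic condition guaranteeing that the center $y$-direction is eventually separated in either forward or backward time, and translating this condition into the cumulative central expansion recorded by $\phi^c$ is the step requiring care.
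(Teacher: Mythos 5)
The paper does not prove this lemma; it is imported verbatim from [TY, Lemma 3.1], so there is no internal proof to compare against. Your argument is correct and is exactly the intended one: the forward cylinders control the $x$-width, the backward cylinders control the $z$-width, and the identity $|f^{n}(p)_y-f^{n}(q)_y|=m^{-H_n(\omega)}|p_y-q_y|$ (and its backward analogue) shows that the defining condition of $A_{\alpha,\beta}$ is precisely what forces the $y$-width of $C_N(\omega)$ to zero along a subsequence, giving injectivity and bicontinuity on $\pi^{-1}(A_{\alpha,\beta})$.
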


 \subsection{Connection to the Lebesgue measure}\label{ent-Leb}
 Since $c_1+c_2=\frac{1}{m}$, $f_{c_1,c_2}$ and $f_{c_2,c_1}$ preserve the Lebesgue measure on $[0,1]^3$.

\begin{prop}\label{correspond}
We have
$h(g_{c_1},{\rm Leb})=h(g_{c_2},{\rm Leb})=\log (m+1)$, 
${\rm Leb}\circ\pi_{c_2,c_1}^{-1}=\nu_\alpha$ and 
 ${\rm Leb}\circ\pi_{c_1,c_2}^{-1}=\nu_\beta.$
\end{prop}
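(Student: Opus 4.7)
The strategy is to identify $\mu_1:={\rm Leb}\circ\pi_{c_1,c_2}^{-1}$ with $\nu_\beta$ and $\mu_2:={\rm Leb}\circ\pi_{c_2,c_1}^{-1}$ with $\nu_\alpha$ by exhibiting each $\mu_i$ as an ergodic shift-invariant measure on $\Sigma_D$ of entropy $\log(m+1)$ concentrated on the appropriate symbolic set, and then invoking Krieger's classification (Theorem~\ref{idem-mme}); the entropy equalities $h(g_{c_i},{\rm Leb})=\log(m+1)$ fall out of the argument. The semiconjugacies $\sigma\circ\pi_{a,b}=\pi_{a,b}\circ g_a$ together with the $g_{c_i}$-invariance and ergodicity of Lebesgue (the latter from Theorem~A) make each $\mu_i$ a shift-invariant ergodic Borel probability measure, supported on $\Sigma_D$ by Theorem~\ref{TY-thm}. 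The first half of the identification comes from Lemma~\ref{class-lem}: since $\int\phi^c\,d{\rm Leb}=(1-2mc_i)\log m$ equals $\tfrac{m-1}{m+1}\log m>0$ for $i=1$ and $\tfrac{1-m}{m+1}\log m<0$ for $i=2$, parts (c) and (b) respectively give $\mu_1(A_\beta)=1$ and $\mu_2(A_\alpha)=1$.

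The crux is showing $h(g_{c_i},{\rm Leb})=\log(m+1)$. My plan is to compute the three Lyapunov exponents of $({\rm Leb},g_{c_i})$ directly from the piecewise-constant Jacobians: the unstable exponent $\lambda^u=\int\log|\tau_{c_i}'|\,d{\rm Leb}$, the central exponent $\lambda^c=\int\phi^c\,d{\rm Leb}=(1-2mc_i)\log m$, and the stable exponent $\lambda^s=-\log(m+1)$ (uniform $z$-contraction by $1/(m+1)$, since $1-mc_2=mc_1=1/(m+1)$). A direct calculation gives $\lambda^u+\lambda^c=\log(m+1)$ for $i=1$ (the $\log m$-terms cancel) and $\lambda^u=\log(m+1)$ for $i=2$ (where all $m+1$ branches of $\tau_{c_2}$ share slope $m+1$, while $\lambda^c<0$). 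In both cases the sum of positive exponents is $\log(m+1)$, so the Pesin entropy formula for piecewise-affine maps with absolutely continuous invariant measure yields $h(g_{c_i},{\rm Leb})=\log(m+1)$.

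To transfer this to $h(\sigma,\mu_i)$ I would verify that $\{\Omega_\gamma\}_{\gamma\in D}$ is a generating partition for $(g_{c_i},{\rm Leb})$, equivalently that $\pi_{c_i,1/m-c_i}$ is injective modulo Lebesgue null sets: the $x$-coordinate is pinned down by the forward itinerary via the uniform expansion of $\tau$, the $z$-coordinate by the backward itinerary via the uniform contraction, and the $y$-coordinate is determined by whichever orbit direction has positive central exponent. This gives $h(\sigma,\mu_i)=h(g_{c_i},{\rm Leb})=\log(m+1)$, and Krieger's uniqueness in each of $A_\alpha,A_\beta$ then forces $\mu_1=\nu_\beta$ and $\mu_2=\nu_\alpha$, completing the proof.

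The main obstacle is the rigorous application of Pesin's formula in this piecewise-affine, non-uniformly hyperbolic setting, where the center direction has pointwise Jacobians on both sides of $1$. Should the general smooth framework not be directly available, my backup is to compute $h(\sigma,\mu_i)$ by Shannon--McMillan--Breiman applied to the explicit cylinder measures
\[
\mu_i[\omega_0\cdots\omega_{n-1}]=\left|\bigcap_{k=0}^{n-1}g_{c_i}^{-k}\Omega_{\omega_k}\right|,
\]
which factor as $|A_\omega|_x\cdot|A_\omega|_y$ (the $z$-direction is unconstrained by the partition). The ergodic theorem controls the $x$-contribution via the frequencies of $\alpha$- and $\beta$-symbols, while the $y$-contribution is tracked by the Markov diagram combinatorics of Section~\ref{Hofbauer-sec}; the two contributions combine to give asymptotic rate $\log(m+1)$, bypassing Pesin entirely.
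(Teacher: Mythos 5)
Your overall identification strategy --- pushing Lebesgue forward under the coding map, checking shift-invariance and ergodicity via Theorem~A, locating each image measure on $A_\alpha$ or $A_\beta$ through the sign of $\int\phi^c\,{\rm d}{\rm Leb}$ and Lemma~\ref{class-lem}, transferring entropy by Lemma~\ref{m-bij}, and concluding by Krieger's classification once the entropy equals $\log(m+1)$ --- is exactly the paper's. The divergence, and the weak point, is the entropy computation. The paper never computes $h(g_{c_1},{\rm Leb})$ from scratch: it proves only the \emph{lower} bound $h(g_{c_1},{\rm Leb})\geq\log(m+1)$, by showing $\{\Omega_\gamma\cap\Lambda\}_{\gamma\in D}$ generates (Proposition~\ref{R-fin} handles the $y$-direction, which is legitimate since $c_1<\frac{1}{2m}$) and bounding the forward cylinder decay from below via Shannon--McMillan--Breiman; the \emph{upper} bound is imported from Theorem~\ref{thm1.2}, which says $\log(m+1)$ is the maximal entropy of $f_{a,b}$ for $a\in[c_1,c_2]$. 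The case $g_{c_2}$ is then obtained from $g_{c_1}$ by the involution symmetry ([TY, Lemma~3.9] together with $\iota$-invariance of ${\rm Leb}$), not by a direct analysis of the mostly contracting center.

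Your primary route via Pesin's entropy formula contains a genuine gap that you yourself flag: for these discontinuous piecewise affine maps neither Ruelle's inequality nor the Pesin equality is available off the shelf, and verifying the Katok--Strelcyn-type singularity conditions would be a substantial detour. Your backup via SMB and cylinder measures is essentially the paper's lower-bound argument, but to extract the exact limit $\log(m+1)$ you would also need a matching \emph{lower} bound on $\bigl|\bigcap_{k=0}^{n-1}g_{c_i}^{-k}\Omega_{\omega_k}\bigr|$, i.e.\ an upper bound on the number of cutting times in terms of $\max_{k\leq n}S_k\phi^c$; this is provable from the Markov diagram combinatorics but is real work you do not carry out, and for $g_{c_2}$ (where $\{R=\infty\}$ has positive measure) both the generating-partition claim and the $y$-width estimate must be run in backward time or transferred by the involution. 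The cleanest repair is to keep your SMB lower bound for $g_{c_1}$, take the upper bound from Theorem~\ref{thm1.2}, and deduce the $g_{c_2}$ statement by the symmetry $\iota$ --- which is precisely the paper's route.
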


\begin{proof} 
Recall that the restriction of $g_{c_1}$ to $\Lambda_{c_1,c_2}$ is invertible,
 uniformly expanding in the $x$-direction and uniformly contracting in the $z$-direction. Using Proposition~\ref{R-fin} to deal with the dynamics in the $y$-direction,
it is easy to see that 
 \[\bigvee_{n=-\infty}^{\infty}g^{-n}\{\Omega_\gamma\cap\Lambda_{c_1,c_2}\}_{\gamma\in D}\stackrel{\circ}{=}\mathscr B(\Lambda_{c_1,c_2}).\]
 Moreover, $(g_{c_1},{\rm Leb})$ is ergodic by Theorem~A.
  Therefore, Shannon-McMillan-Breimann's theorem yields
\[h(g_{c_1},{\rm Leb})=-\lim_{n\to\infty}\frac{1}{n}\log \left|\bigcap_{k=0}^{n-1} g_{c_1}^{-k}\left(\Omega_{\omega_k }\right)\right|\]
 for Lebesgue a.e. $p\in\Lambda_{c_1,c_2 }$
where $\pi_{c_1,c_2 }(p)=(\omega_n)_{n=-\infty}^\infty\in\Sigma_{D}$. Clearly
 we have
 \[\begin{split}-\lim_{n\to\infty}\frac{1}{n}\log\left|\bigcap_{k=0}^{n-1} g_{c_1}^{-k}\left(\Omega_{\omega_k}\right)\right|\geq &mc_1\log\frac{1}{c_1}+(1-mc_1)\log\frac{1}{1-mc_1}\\&+(1-2mc_1)\log m=\log(m+1),\end{split}\]
and hence
  $h(g_{c_1},{\rm Leb})\geq\log (m+1)$. The reverse inequality is a consequence of Theorem~\ref{thm1.2}. 
  We have verified $h(g_{c_1},{\rm Leb})=\log (m+1)$.
By \cite[Lemma~3.9]{TY} and
${\rm Leb}\circ\iota^{-1}={\rm Leb}$ we have
 $h(g_{c_2},{\rm Leb})=h(g_{c_1},{\rm Leb})$.

By Theorem~A and Theorem~\ref{TY-thm},
the measure
${\rm Leb}\circ\pi_{c_2,c_1}^{-1}$ on $\Sigma_D$ is shift invariant and ergodic. By $\int\phi^c{\rm d}{\rm Leb}=-\frac{m-1}{m+1}\log2<0$ and Lemma~\ref{class-lem}, it
gives measure $1$ to $A_{\alpha}$, and by Lemma~\ref{m-bij} has entropy equal to
    $h(g_{c_2},{\rm Leb})$, which equals $\log(m+1)$ as already proved.
Hence we obtain
${\rm Leb}\circ\pi_{c_2,c_1}^{-1}=\nu_\alpha$.
    A proof of the last equality in the proposition is analogous.
\end{proof}

\subsection{Proof of Theorem~C}\label{pfthmc}
First 
we show exponential mixing for $(\sigma,\nu_\alpha)$, $(\sigma,\nu_\beta)$.
To this end, by virtue of Proposition~\ref{dual-Dyck} it suffices to show that for any pair $\varphi$, $\psi$  
of H\"older continuous functions on $\Sigma_D$, their correlation
${\rm Cor}_n(\sigma;\varphi,\psi;\nu_\beta)$ 
decays exponentially in $n$. This does not immediately follow from Theorem~B.
Indeed, Proposition~\ref{correspond}
implies
${\rm Cor}_n(\sigma;\varphi,\psi;\nu_\beta)={\rm Cor}_{n}(g_{c_1};\varphi\circ\pi_{c_1,c_2},\psi\circ\pi_{c_1,c_2};{\rm Leb})$,
but  $\varphi\circ\pi_{c_1,c_2}$ and
 $\psi\circ\pi_{c_1,c_2}$ may not be H\"older continuous. 
 Our strategy is to transfer to the Dyck system
the towers associated with $f_{c_1}$, $g_{c_1}$ constructed in Section~\ref{tower-sec}, and
mimic the proof of Theorem~B.

We restrict ourselves to the shift invariant set $A_\beta$ in \eqref{three-sets}. Bear in mind that $\nu_\beta(A_\beta)=1$.
By Lemma~\ref{m-bij},
the restriction of $\pi_{c_1,c_2}$ to $\pi_{c_1,c_2}^{-1}(A_{\beta})$ is a homeomorphism onto its image.
  We put
\[\hat\Delta_0=\pi_{c_1,c_2}(\Delta_0)\cap A_{\beta}\ \text{ and } \
\hat\Delta_{0,i}=\pi_{c_1,c_2}(\Delta_{0,i})\cap A_{\beta}.\]
Define
$\hat R\colon \pi_{c_1,c_2}([0,1]^3)\cap A_{\beta}\to\mathbb Z_+\cup\{\infty\}$ by $\hat R(\omega)=R(\pi_{c_1,c_2}^{-1}(\omega))$.
We define a tower 
\[\hat\Delta=\{(\omega,\ell)\colon \omega\in\hat\Delta_0,\ \ell=0,1,\ldots, \hat R(\omega)-1\},\]
and
  define a tower map $\hat G\colon\hat\Delta\to\hat\Delta$ by
\[\hat G(\omega,\ell)=\begin{cases}(\omega,\ell+1)&\ \text{ if }\ell+1<\hat R(\omega),\\
(\sigma^{\hat R(\omega)}(\omega),0)&\ \text{ if }\ell+1=\hat R(\omega).
\end{cases}\]
Collapsing the negative coordinate, we obtain 
the quotient tower $\hat{\Delta}^+$ and the tower map
$\hat F\colon \hat{\Delta}^+\to\hat{\Delta}^+$.
We fix a sigma-algebra on $\hat\Delta$
that is obtained by naturally transplanting the Borel sigma-algebra on $\Sigma_D$. 

By Proposition~\ref{R-fin}, $\hat R$ is finite $\nu_\beta$-a.e. and 
$\nu_\beta\{\hat R=n\}=|\{R=n\}|$.
Each floor \[\hat\Delta_\ell=\{(p,\ell)\in\hat\Delta\colon p\in\hat\Delta_0\},\ \ell\geq0\]
can be identified with $\{\hat R>\ell\}$, and so equipped with the restriction of the measure $\nu_\beta$. 
Let
$\hat\mu$ denote the probability measure on $\hat\Delta$ given by
\[\hat\mu(A)=\frac{1}{\int \hat R{\rm d}\nu_\beta }\sum_{\ell=0}^\infty\nu_\beta(A\cap\hat\Delta_\ell)\ \text{ for any measurable set }A\subset\hat\Delta.\]

We repeat Steps~1 to 3 in the proof of Theorem~B in Section~\ref{proofthma}.
Each floor $\hat\Delta_\ell$ is partitioned into $\{\hat\Delta_{\ell,i}\}_{i\geq1\colon \hat R_i>\ell}$ where $\hat\Delta_{\ell,i}$ is a copy of $\hat\Delta_{0,i}$.
Let $\hat{\mathscr D}_0$ denote the partition of $\hat\Delta$ into $\hat\Delta_{\ell,i}$-components. For $k\geq1$, let
$\hat{\mathscr{D}}_k=\bigvee_{j=0}^{k-1}{\hat G}^{-j}\hat{\mathscr{D}}_0$.
For $\phi\in\mathscr H_\eta(\Sigma_D)$ and $k\geq1$,
define $\phi_k\colon\hat\Delta\to\mathbb R$
by 
$\phi_k|_A=\inf\{\tilde\phi(w)\colon w\in \hat G^k(A)\}$ for $A\in\hat{\mathscr D}_{2k}.$
Define $\hat\theta\colon(\omega,\ell)\in \hat\Delta\mapsto \sigma^\ell(\omega)\in\Sigma_D$.
For every $A\in\hat{\mathscr D}_{2k}$, the diameter of the set
$\hat\theta ({\hat G}^k(A))$ with respect to the Hamming metric $d$ does not exceed $e^{-k}$. As a counterpart of \eqref{eqho1} we have
$\sup_A|\tilde\phi\circ \hat G^k-\phi_k|\leq |\phi|e^{-\eta k}$, where
$|\phi|$ denotes the $\eta$-H\"older norm of $\phi$
with respect to $d$.
Therefore, 
as a counterpart of Lemma~\ref{LD-lem} we obtain
$\int|\tilde\phi\circ {\hat G}^k-\phi_k| {\rm d}\hat\mu\leq |\phi|e^{-\eta k}$.
To finish, the rest of the argument is completely analogous to Step~3 in the proof of Theorem~B.

We have verified exponential mixing for both $(\sigma,\nu_\alpha)$ and $(\sigma,\nu_\beta)$. Taking a pair of H\"older continuous functions on $\Sigma_D$ which depend only on positive coordinates, we obtain
exponential mixing for both $(\sigma_+,\nu_\alpha^+)$ and $(\sigma_+,\nu_\beta^+)$.
The proof of Theorem~C is complete.
\qed


 \subsection*{Acknowledgments}
The author thanks Yoshitaka Saiki, Toshi Sugiyama, Masato Tsujii, Kenichiro Yamamoto, James A. Yorke for fruitful discussions. 
This research was supported by the JSPS KAKENHI 19K21835, 20H01811.

\end{document}